\definecolor{linkcolor}{rgb}{0.1,0,0.7}
\definecolor{urlcolor}{rgb}{1,0,0}
\newcommand{\Px}{ \mathbb{P} }
\newcommand{\Ex}{ \mathbb{E} }
\def\esssup_#1{\underset{#1}{\mathrm{ess\,sup\, }}}
\def\essinf_#1{\underset{#1}{\mathrm{ess\,inf\, }}}
\def\argmax_#1{\underset{#1}{\mathrm{arg\,max\, }}}
\def\argmin_#1{\underset{#1}{\mathrm{arg\,min\, }}}
\newcommand{\Fx}{\mathbb{F} }
\newcommand{\R}{\mathbb{R}}
\newtheorem{theorem}{Theorem}[section]
\numberwithin{equation}{section}
\newtheorem{proposition}[theorem]{Proposition}
\newtheorem{remark}[theorem]{Remark}
\newtheorem{lemma}[theorem]{Lemma}
\title{Stochastic control problems with state-reflections arising from relaxed benchmark tracking}
\author{Lijun Bo \thanks{Email: lijunbo@ustc.edu.cn, School of Mathematics and Statistics, Xidian University, Xi'an, 710126, China.}
\and
Yijie Huang \thanks{Email: huang1@mail.ustc.edu.cn, School of Mathematical Sciences, University of Science and Technology of China, Hefei, 230026, China}
\and
Xiang Yu \thanks{Email: xiang.yu@polyu.edu.hk, Department of Applied Mathematics, The Hong Kong Polytechnic University, Hung Hom, Kowloon, Hong Kong.}
}
\date{\vspace{-0.5cm}}
\begin{document}

\maketitle

\begin{abstract}
This paper studies stochastic control problems motivated by optimal consumption with wealth benchmark tracking. The benchmark process is modeled by a combination of a geometric Brownian motion and a running maximum process, indicating its increasing trend in the long run. We consider a relaxed tracking formulation such that the wealth compensated by the injected capital always dominates the benchmark process. The stochastic control problem is to maximize the expected utility of consumption deducted by the cost of the capital injection under the dynamic floor constraint. By introducing two auxiliary state processes with reflections, an equivalent auxiliary control problem is formulated and studied, which leads to the HJB equation with two Neumann boundary conditions. We establish the existence of a unique classical solution to the dual PDE using some novel probabilistic representations involving the local time of some dual processes together with a tailor-made decomposition-homogenization technique. The proof of the verification theorem on the optimal feedback control can be carried out by some stochastic flow analysis and technical estimations of the optimal control.

\vspace{0.1in}
\noindent{\textbf{Mathematics Subject Classification (2020)}: 91G10, 93E20, 60H10}

\vspace{0.1in}
\noindent{\textbf{Keywords}:}  Relaxed benchmark tracking, optimal consumption, Neumann boundary conditions, probabilistic representation, reflected diffusion process.

\end{abstract}

\vspace{0.2in}

\hypertarget{introduction}{
\section{Introduction}\label{introduction}}
The continuous time optimal portfolio-consumption problem has been extensively studied in different models since the seminal work \cite{Merton69} and \cite{Merton1971}. In the present paper, we aim to study this problem from a new perspective by simultaneously considering the wealth tracking with respect to an exogenous benchmark process. Similar to a large body of literature on optimal tracking portfolio, see, for instance, \cite{Browne99a},  \cite{Browne99b}, \cite{Browne00},  \cite{Tepla01}, \cite{Gaivoronski05}, \cite{YaoZZ06}, \cite{Strub18}, the goal of tracking is to ensure the agent's wealth level being close to a targeted benchmark such as the market index, the inflation rate or the consumption index. However, unlike the conventional formulation of optimal tracking portfolio in the aforementioned studies, we adopt the relaxed tracking formulation proposed in \cite{BoLiaoYu21} using capital injection such that the benchmark process is regarded as a minimum floor constraint of the total capital.

Let $(\Omega, \mathcal{F}, \Fx,\mathbb{P})$ be a filtered probability space with the filtration $\mathbb{F}=(\mathcal{F}_t)_{t\geq 0}$ satisfying the usual conditions, which supports a $d$-dimensional Brownian motion $(W^1,\ldots,W^d)=(W_t^1,\ldots,W^d_t)_{t\geq 0}$. We consider a market model consisting of $d$ risky assets, whose price dynamics are described by
\begin{align}
\frac{dS_t^i}{S_t^i}= \mu_i dt+\sum_{j=1}^{d} \sigma_{ij} dW_t^j,\quad i=1,\ldots,d\label{stockSDE}
\end{align}
with the return rate $\mu_i\in\R$, $i=1,\ldots,d$, and the volatility $\sigma_{ij}\in\R$, $i,j=1,\ldots,d$. Let us denote $\mu:=(\mu_1,\ldots,\mu_d)^{\top}$ with $\top$ representing the transpose operator, and $ \sigma:=(\sigma_{ij})_{d\times d}$.  It is assumed that $\sigma$ is invertible. We also assume that the riskless interest rate $r=0$ that amounts to the change of num\'{e}raire and $ \mu$ is not a zero vector. From this point onwards, all values are defined after the change of num\'{e}raire. At time $t\geq 0$, let $\theta_t^i$ be the amount of wealth that a fund manager allocates in asset $S^i=(S^i_t)_{t\geq 0}$ and $c_t$ be the consumption rate. The self-financing wealth process of the agent satisfies the controlled SDE:
\begin{align}\label{eq:wealth2}
V^{\theta,c}_t &=\textrm{v}+\int_0^t\theta_s^{\top}\mu ds+\int_0^t\theta_s^{\top}\sigma dW_s-\int_0^tc_sds,\quad \forall t\geq 0,
\end{align}
where $\textrm{v}\geq0$ represents the initial wealth level of the agent.

To incorporate the wealth tracking into our optimal consumption problem, let us consider a general type of benchmark processes $M=(M_t)_{t\geq0}$, which is described by
\begin{align}\label{eq:Mtn}
M_t=m_t+Z_t,\quad \forall t\geq 0,
\end{align}
where $Z_t=z+\int_0^t\mu_Z Z_sds+ \int_0^t\sigma_Z Z_sdW^{\eta}_s$ is a GBM and $m_t:=\max\{\textrm{m}, \sup_{s\leq t}B_s\}$ is the running maximum process of the drifted Brownian motion $B_t=b+\mu_B t+ \sigma_B W_t^{\gamma}$. Here, the model parameters $z,{\rm m}\geq0$, $b\in\R$, $\mu_Z,\mu_B\in\R$ and $\sigma_Z,\sigma_B\geq0$. For the correlative vector $\gamma=(\gamma_1,\ldots,\gamma_d)^{\top}\in[-1,1]^d$, the process $W^{\gamma}=(W_t^{\gamma})_{t\geq 0}$ is a linear combination of the $d$-dimensional Brownian motion $(W^1,\ldots,W^d)$ with weights $\gamma$, which itself is a Brownian motion. Similarly, the process $W^{\eta}=(W_t^{\eta})_{t\geq 0}$ is a linear combination of the $d$-dimensional Brownian motion $(W^1,\ldots,W^d)$ with weights $\eta=(\eta_1,\ldots,\eta_d)\in[-1,1]^d$. 

The benchmark process in the general form of \eqref{eq:Mtn} can effectively capture the long-term increasing trend of many typical benchmark processes, such as S\&P~500, NASDAQ and Dow Jones, or the movements of CPI index and higher education costs in the long run. Figure \ref{fig:index}-(a) illustrates the increasing trend of simulated sample paths of \eqref{eq:Mtn}, which is consistent to Figure \ref{fig:index}-(b) that displays the long term growing trend of the observed data of S\&P500, NASDAQ and Dow Jones from April 1, 2010 to November 01, 2020. Similarly, Figure \ref{fig:index}-(c) plots the Consumer Price Index for Urban Wage Earners and Clerical Workers (CPI-W) from 1984 to 2023, and Figure \ref{fig:index}-(d) plots the total cost of U.S. undergraduate students over time from 1963 to 2021, which both exhibit the same increasing trend in the long run.

\begin{figure}[]
\centering
 \subfigure[]{
        \includegraphics[width=7.6cm]{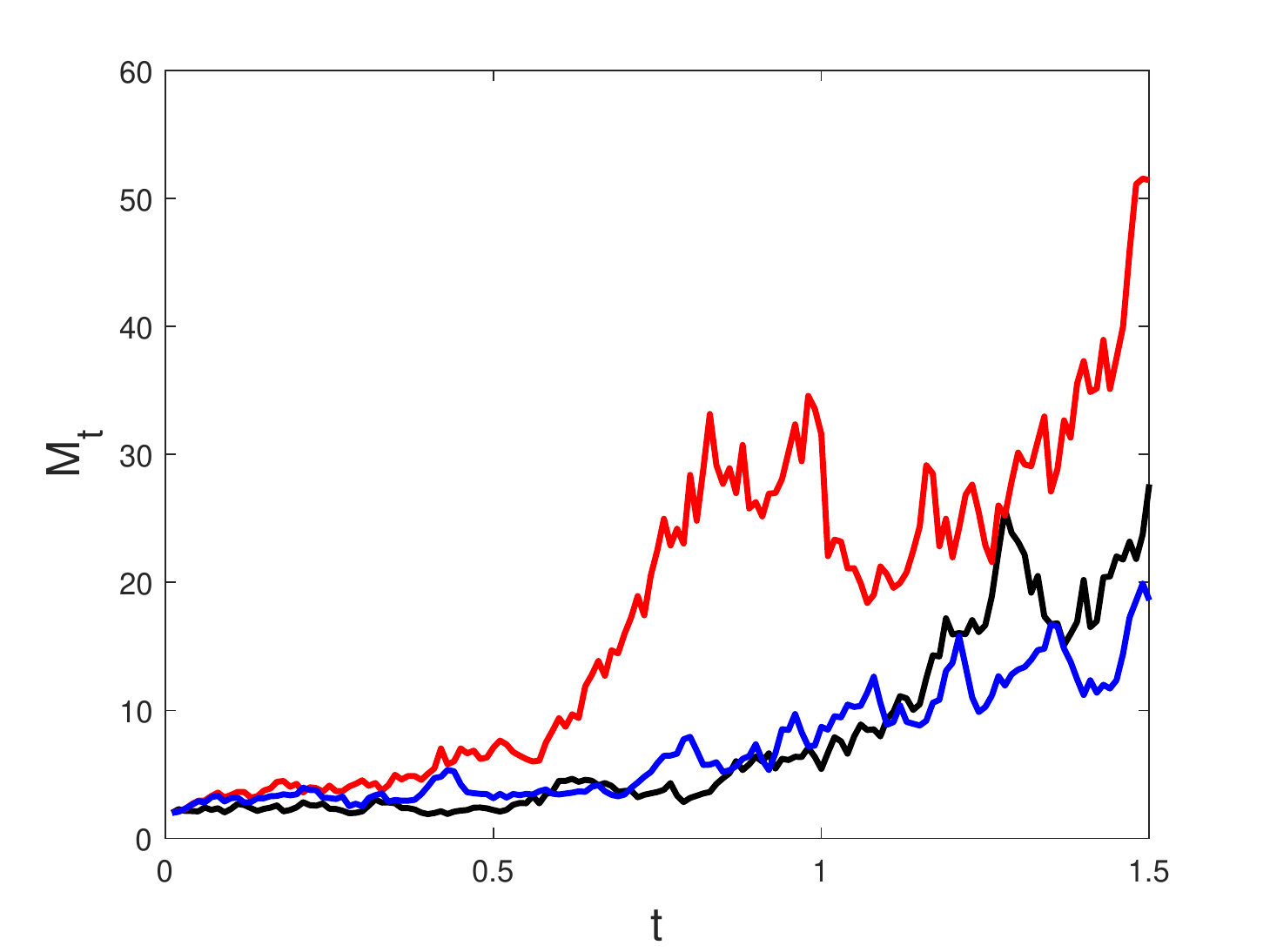}
    }\hspace{-8mm}
  \subfigure[]{
        \includegraphics[width=7.6cm]{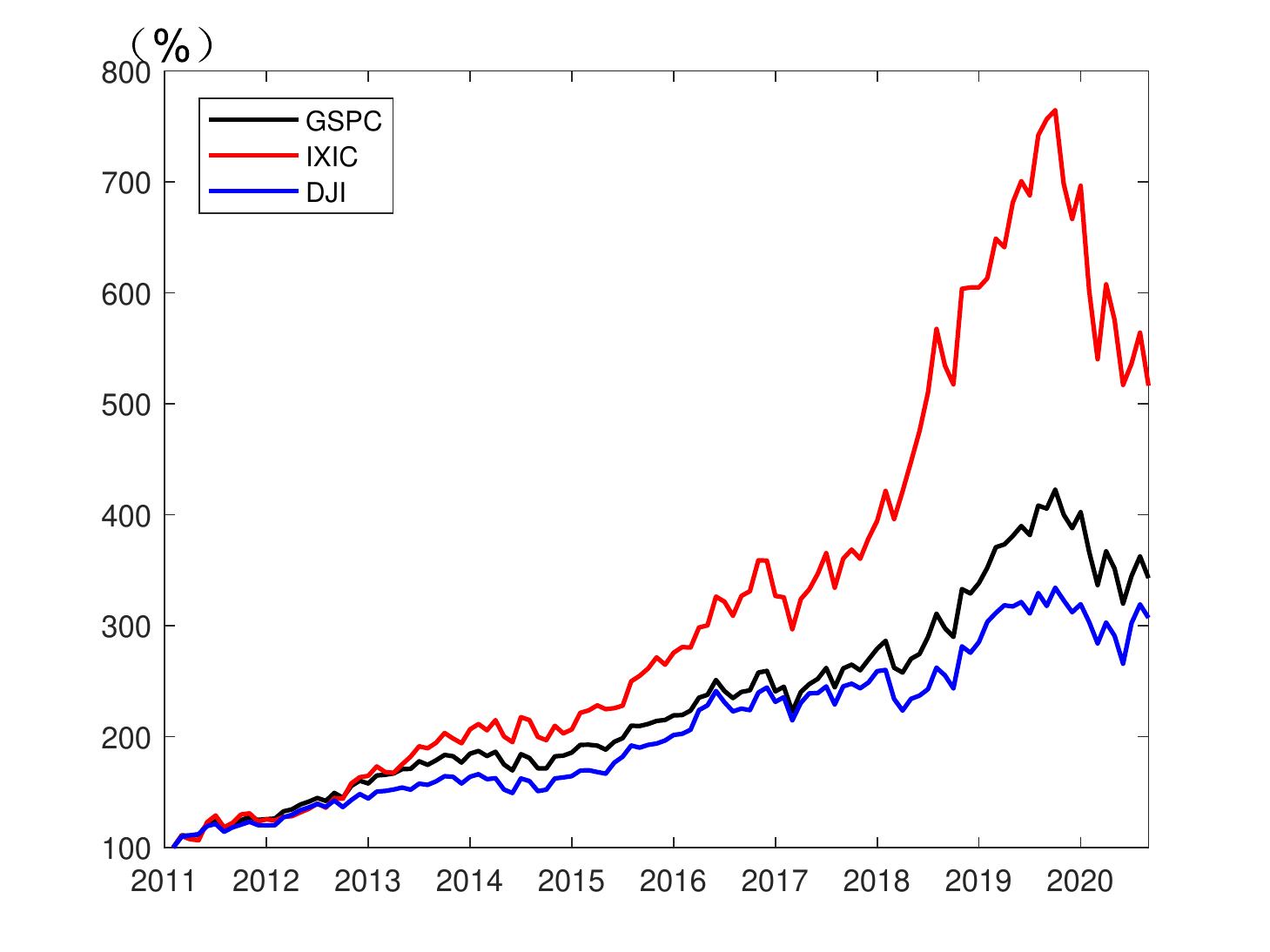}
    }

 \subfigure[]{
        \includegraphics[width=7.1cm]{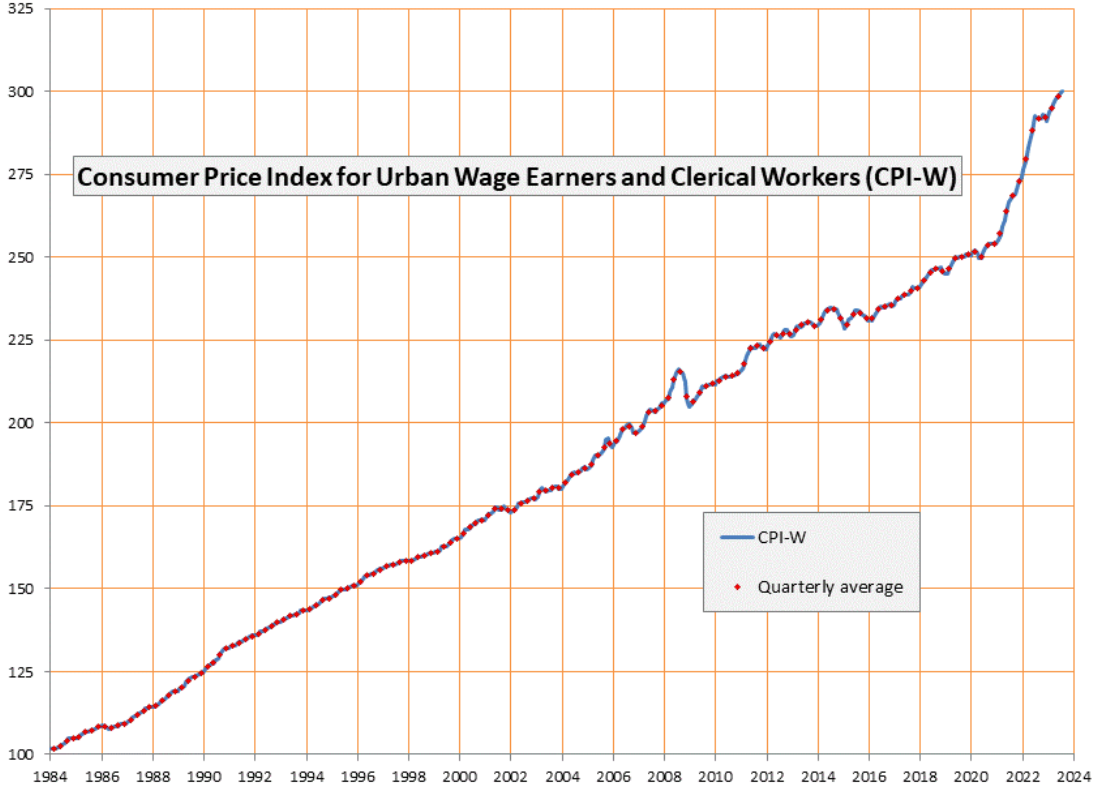}
    }
 \subfigure[]{
        \includegraphics[width=7.1cm,height=5cm]{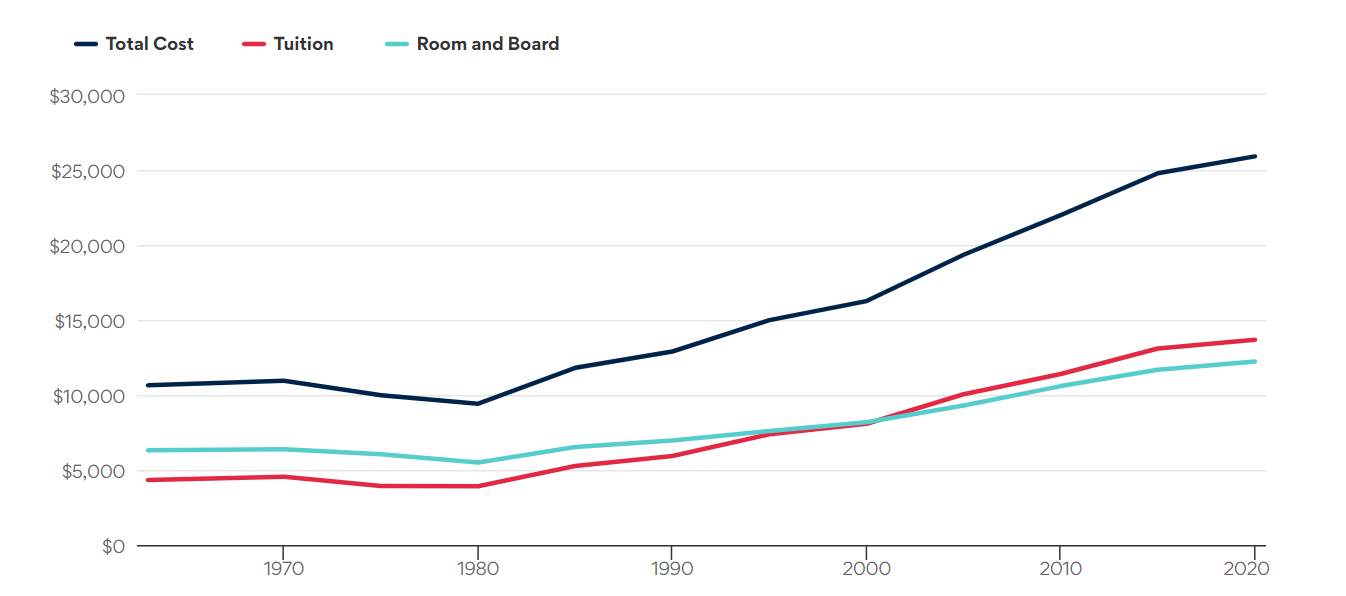}
    }
 \caption{\small (a): Simulated sample paths of the benchmark process $t\to M_t$ via Monte Carlo with dimension $d=1$. The model parameters are set to be $z=0.8,~{\rm m}=0,~b=1,~\mu_Z=2,~\sigma_Z=1,~\mu_B=2,~\sigma_B=0,~\gamma=\eta=1$. (b): The price movements of market indices S\&P500 (GSPC), NASDAQ (IXIC) and Dow Jones (DJI) based on observed data (April 1, 2011 to November 01, 2020) from Yahoo Finance. (c): Consumer Price Index for the US's Urban Wage Earners and Clerical Workers from 1984 to 2023, available from \url{https://www.ssa.gov/oact/STATS/cpiw_graph.html}. (d) Total cost of college in the U.S. for undergraduate students from 1963 to 2021, available from \url{https://www.bestcolleges.com/research/college-costs-over-time/}.}\label{fig:index}
\end{figure}

We consider the relaxed benchmark tracking using the capital injection. At any time $t\geq0$, it is assumed that the fund manager can strategically inject capital $A_t$ such that the total wealth $V_t+A_t$ stays above the benchmark process $M_t$. In the objective function, in addition to the expected utility on consumption, the fund manager also needs to take into account the cost of total capital injection. Mathematically speaking, the fund manager now aims to maximize the following objective function under dynamic floor constraint that, for all $(\mathrm{v},\textrm{m},z,b)\in\R_+\times\R_+\times\R_+\times\R$ with $\R_+:=[0,\infty)$,
\begin{align}\label{eq_prob_IBP}
\begin{cases}
\displaystyle {\rm w}(\mathrm{v},\textrm{m},z,b):=\sup_{(\theta,c,A)\in\mathbb{U}} \Ex\left[\int_0^{\infty} e^{-\rho t} U(c_t)dt - \beta\left(A_0+\int_0^{\infty} e^{-\rho t}dA_t\right) \right],\\[1.4em] \displaystyle ~\textrm{s.t.}~ M_t \le A_t + V^{\theta,c}_t~\textrm{at each}~t\geq0,
\end{cases}
\end{align}
where $\rho>0$ is the discount rate and $\beta>0$ describes the cost per injected capital, which can also be interpreted as the weight of relative importance between the consumption performance and the cost of capital injection. Here, $(\theta,c,A)\in\mathbb{U}$ denotes an admissible control where $(\theta,c)=(\theta_t,c_t)_{t\geq 0}$ is an $\Fx$-adapted process taking values on $\R^d\times\R_+$, and $A=(A_t)_{t\geq 0}$ is a right-continuous, non-decreasing and $\Fx$-adapted process. In the present paper, the utility function is considered as the power utility $U(x)=\frac{1}{p}x^{p}$, $x\in\R_+$, with the risk aversion parameter $p\in(-\infty,0)\cup(0,1)$.

Stochastic control problems with minimum guaranteed floor constraints have been studied in different contexts, see among \cite{Karoui05}, \cite{KM06}, \cite{Giacinto11}, \cite{Sekine12}, \cite{Giacinto14} and \cite{CYZ20} and references therein. In previous studies, the minimum guaranteed level is usually chosen as constant or deterministic level and some typical techniques to handle the floor constraints are to introduce the option based portfolio or the insured portfolio allocation such that the floor constraints can be guaranteed. However, if we consider the Merton problem under the strict floor constraint on wealth that $V^{\theta,c}_t\geq M_t$ a.s. for all $t\geq 0$, the set of admissible controls might be empty due to the more complicated benchmark process $M_t$ in \eqref{eq:Mtn}. In this regard, we introduce the singular control of capital injection $A_t$ in our relaxed tracking formulation such that the admissible set can be enlarged and the optimal control problem can become solvable. By minimizing the cost of capital injection, the controlled wealth process $V^{\theta,c}_t$ stays very close to the benchmark process $M_t$ as desired. To address the dynamic floor constraint, our first step is to reformulate it into an unconstrained control problem. By applying Lemma 2.4 in \cite{BoLiaoYu21}, for each fixed regular control $(\theta,c)$, the optimal singular control $A^{(\theta,c),*}_t$ satisfies the form that
\begin{align}\label{A-sing}
A^{(\theta,c),*}_t =0\vee \sup_{s\leq t}(M_{s}-V_{s}^{\theta,c}), \quad\forall t\geq 0.
\end{align}
Thus, the original problem \eqref{eq_prob_IBP} with the constraint $M_t\leq A_t+V_t^{\theta,c}$ for all $t\geq 0$ admits an equivalent formulation as an unconstrained utility maximization problem with a running maximum cost that
\begin{align}\label{eq_orig_pb}
{\rm w}(\mathrm{v}, \textrm{m},z,b)&=-\beta (\textrm{m}\vee b+z-\mathrm{v})^+\\
 &\quad+ \sup_{(\theta,c)\in\mathbb{U}^{\rm r}}\ \Ex\left[ \int_0^{\infty} e^{-\rho t} U(c_t)dt-\beta\int_0^{\infty} e^{-\rho t} d\left(0\vee \sup_{s\leq t}(M_{s}-V_{s}^{\theta,c})\right)\right].\nonumber
\end{align}
Here, $\mathbb{U}^{\rm r}$  denotes the set of regular $\Fx$-adapted admissible strategies $(\theta,c)=(\theta_t,c_t)_{t\geq 0}$ taking values on $\R^d\times\R_+$ such that, for any $T>0$, the SDE \eqref{eq:wealth2} admits a weak solution on $[0,T]$.

It is worth noting that some existing studies can be found in stochastic control problems with a running maximum cost, see \cite{BaIshii89},  \cite{BDR1994}, \cite{BPZ15}, \cite{Weerasinghe16} and \cite{Kroner18}, where the viscosity solution approach usually plays the key role. In our optimal control problem \eqref{eq_orig_pb}, two fundamental questions need to be addressed: (i) Can we characterize the optimal portfolio and consumption control pair $(\theta^*, c^*)$ in the feedback form if it exists? (ii) Whether the relaxed tracking formulation is well-defined in the sense that the expected total capital injection $\mathbb{E}[\int_0^{\infty} e^{-\rho t} d(0\vee \sup_{s\leq t}(M_{s}-V_{s}^{\theta^*,c^*}))]$ is finite? We will verify that our problem formulation does not require the injection of infinitely large capital to meet the tracking goal. The present paper contributes positive answers to both questions.

In solving the stochastic control problem \eqref{eq_orig_pb} with a running maximum cost, we introduce two auxiliary state processes with reflections and study an auxiliary stochastic control problem, which gives rise to the HJB equation with two Neumann boundary conditions. By applying the dual transform and stochastic flow analysis, we can conjecture and carefully verify that the classical solution of the dual PDE satisfies a separation form of three terms, all of which admit probabilistic representations involving some dual reflected diffusion processes and/or the local time at the reflection boundary. We stress that the main challenge is to prove the smoothness of the conditional expectation of the integration of an exponential-like functional of the reflected drifted-Brownian motion (RDBM) with respect to the local time of another correlated RDBM. We propose a new method of decomposition-homogenization to the dual PDE, which allows us to show the smoothness of the conditional expectation of the integration of exponential-like functional of the RDBM with respect to the local time of an independent RDBM.

By using the classical solution to the dual PDE with Neumann boundary conditions and establishing some technical estimations of candidate optimal controls, we can address the previous question (i) and rigorously characterize the optimal control pair $(\theta^*, c^*)$ in a feedback form in the verification theorem. Based on our estimations of the optimal control processes, we can further answer the previous question; (ii) and verify that the expected total capital injection $\mathbb{E}[\int_0^{\infty} e^{-\rho t} d(0\vee \sup_{s\leq t}(M_{s}-V_{s}^{\theta^*,c^*}))]$ is indeed bounded, and hence our problem \eqref{eq_prob_IBP} in a relaxed tracking formulation using the additional singular control is well defined. Moreover, it is also shown that $\mathbb{E}[\int_0^{\infty} e^{-\rho t} d(0\vee \sup_{s\leq t}(M_{s}-V_{s}^{\theta^*,c^*}))]$ is bounded below by a positive constant, indicating that the capital injection is necessary for the well-posedness for the control problem. We also note that $A^{*}_t =\sup_{s\leq t}(V_{s}^{\theta^*,c^*}-M_s)^{-}$ records the largest shortfall when the wealth process $V_{s}^{\theta^*,c^*}$ falls below the benchmark process $m_s$ up to time $t$. As a manner of risk management, the finite expectation $\mathbb{E}[\int_0^{\infty} e^{-\rho t} d(0\vee \sup_{s\leq t}(M_{s}-V_{s}^{\theta^*,c^*}))]$ can  quantitatively reflect the expected largest shortfall of the wealth management with respect to the benchmark in a long run.

The rest of the paper is organized as follows. In Section \ref{auxiliaryproblem}, we introduce the auxiliary state processes with reflections and derive the associated HJB equation with two Neumann boundary conditions for the auxiliary stochastic control problem. In Section \ref{sec:dualhjb}, we address the solvability of the dual PDE problem by verifying a separation form of the solution and the probabilistic representations, the homogenization of Neumann boundary conditions and the stochastic flow analysis. The verification theorem on the optimal feedback control is presented in Section \ref{sec:verfication} together with the technical proofs on the strength of stochastic flow analysis and estimations of the optimal control. It is also verified therein that the expected total capital injection is bounded. Finally, the proof of an auxiliary lemma is reported in Appendix \ref{appendix}.

\hypertarget{controlproblem}{
\section{Formulation of the Auxiliary Control Problem}\label{auxiliaryproblem}}

In this section, we formulate and study a more tractable auxiliary stochastic control problem, which is mathematically equivalent to the unconstrained optimal control problem \eqref{eq_orig_pb}. To this end, we first introduce a new auxiliary state process to replace the wealth process $V^{\theta,c}=(V_t^{\theta,c})_{t\geq 0}$ given in \eqref{eq:wealth2}. Let us first define
\begin{align}\label{eq:Dt}
    D_t:=M_t-V_t^{\theta,c}+\mathrm{v}-\textrm{m}\vee b-z,\quad \forall t\geq 0,
\end{align}
where $M=(M_t)_{t\geq 0}$ is defined by \eqref{eq:Mtn}, and it is clear that $D_0=0$. Moreover, for any $x\geq 0$, we define the running maximum process of the process $D=(D_t)_{t\geq 0}$ that
\begin{align}\label{eq:maxM}
L_t :=x\vee \sup_{s\leq t}D_s-x\geq 0,\quad \forall t\geq 0
\end{align}
with the initial value $L_0=0$. The auxiliary state process $X=(X_t)_{t\geq 0}$ is then defined as the reflected process $X_t:=L_t-D_t$ for $t\geq 0$ that satisfies the SDE that for all $t>0$,
\begin{align}\label{state-X}
X_t =x+\int_0^t\theta_s^{\top}\mu ds+\int_0^t\theta_s^{\top}\sigma dW_s  -\int_0^t c_s ds-\int_0^t \mu_Z Z_sds-\int_0^t \sigma_Z Z_sdW^{\eta}_s-\int_0^t dm_s+ L_t
\end{align}
with the initial value $X_0=x\geq 0$. In particular, $X_t$ hits $0$ if  the running maximum process $L_t$ increases. We will change the notation from $L_t$ to $L_t^X$ from this point onwards to emphasize its dependence on the new state process $X$ given in \eqref{state-X}.

On the other hand, for the running maximum process $m=(m_t)_{t\geq 0}$ in \eqref{eq:Mtn}, we also introduce a second auxiliary state process $I_t:=m_t-B_t$ for all $t\geq 0$. As a result, $I_t$ hits $0$ if $m_t$ increases, and we have
\begin{align}\label{state-Y}
I_t=I_0+\int_0^t dm_s-\int_0^t \mu_Bds-\int_0^t\sigma_B dW^{\gamma}_s,\quad \forall t\geq 0,
\end{align}
where the initial state value $I_0=\textrm{m}\vee b-b\geq0$.

The stochastic control problem \eqref{eq_orig_pb} can be solved by studying the auxiliary problem that, for all $(x,y,z)\in\R_+^3$,
\begin{align}\label{eq_mfg-2}
\begin{cases}
\displaystyle u(x,h,z):=\sup_{(\theta, c)\in\mathbb{U}^{\rm r}}J(x,h,z;\theta,c)=\sup_{(\theta, c)\in\mathbb{U}^{\rm r}} \Ex_{x,h,z}\left[\int_0^{\infty} e^{-\rho t} U(c_t)dt- \beta \int_0^{\infty} e^{-\rho t}dL_t^X\right],\\[1em]
\displaystyle~\text{S.t. the state process}~(X,I)~\text{satisfies~\eqref{state-X}-\eqref{state-Y}, and $Z$ is the GBM in \eqref{eq:Mtn},}
\end{cases}
\end{align}
where $\Ex_{x,h,z}[\cdot]:=\Ex[\cdot|X_0=x,I_0=h,Z_0=z]$. We note the equivalence that 
\begin{align}\label{eq:wu}
{\rm w}({\rm v},\textrm{m},z,b)=
\begin{cases}
u({\rm v}-\textrm{m}\vee b-z,\textrm{m}\vee b-b,z), &\text{if}~{\rm v}\geq \textrm{m}\vee b+z,\\[0.3em]
u(0,\textrm{m}\vee b-b,z)- \beta(\textrm{m}\vee b+z  -{\rm v}), &\text{if}~{\rm v}<\textrm{m}\vee b+z,
\end{cases}
\end{align}
where ${\rm w}({\rm v},\textrm{m},z,b)$ is given by \eqref{eq_orig_pb}.

We first have the following property of the value function $u$ in \eqref{eq_mfg-2}, whose proof is deferred to Appendix \ref{appendix}.
\begin{lemma}\label{lem:propoertyu}
Let the discount factor $\rho>2\mu_Z+\sigma_Z^2$ (if $2\mu_Z+\sigma_Z^2\leq0$, this condition is automatically satisfied). Then, $x\to u(x,h,z)$, $h\to u(x,h,z)$ and $z\to u(x,h,z)$ are non-decreasing. Moreover, it holds that, for all $(x_1,x_2,h_1,h_2,z_1,z_2)\in\R_+^6$,
\begin{align*}
\left|u(x_1,h_1,z_1)-u(x_2,h_2,z_2)\right|&\leq \beta (|x_1-x_2|+|h_1-h_2|)\nonumber\\
&\quad+\beta \left(\sigma_Z^2+\frac{|\mu_Z|}{\rho-\mu_Z}+\frac{3}{\rho-2\mu_Z-\sigma_Z^2}\right) |z_1-z_2|,
\end{align*}
where we recall that $\beta>0$ is the cost parameter due to the capital injection appeared in \eqref{eq_prob_IBP}.
\end{lemma}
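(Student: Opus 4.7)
The plan is to estimate $|J(x_1, h_1, z_1; \theta, c) - J(x_2, h_2, z_2; \theta, c)|$ uniformly in the admissible control $(\theta, c) \in \mathbb{U}^{\rm r}$ and then pass to the supremum. Since the consumption-utility term $\int_0^\infty e^{-\rho t} U(c_t)\,dt$ is independent of the initial state $(x, h, z)$, the whole discrepancy sits in the singular penalty $\beta \int_0^\infty e^{-\rho t} dL_t^X$, and the argument reduces to comparing the reflection processes $L^X$ pathwise via the Skorohod map under a common control.

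For the $x$-direction, writing the unreflected auxiliary process as $\tilde X^x_t = x - D_t$ with $D_t$ independent of $x$, the Skorohod representation gives $L^{X,x}_t = (\sup_{s \le t} D_s - x)_+$. For $\delta > 0$ one then has
\begin{align*}
L^{X,x}_t - L^{X,x+\delta}_t = L^{X,x}_t \wedge \delta,
\end{align*}
which is non-negative, non-decreasing in $t$, and bounded by $\delta$; a Stieltjes integration by parts immediately yields $0 \le \int_0^\infty e^{-\rho t} d(L^{X,x}_t - L^{X,x+\delta}_t) \le \delta$, delivering both the monotonicity and the $\beta\delta$ Lipschitz bound in $x$. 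The $h$-direction is essentially identical: raising $I_0$ from $h$ to $h+\delta$ decreases the running-maximum local time of $I$ by the non-decreasing process $\delta - (m_t^{h+\delta} - m_t^h) \in [0,\delta]$, which in turn shifts $\tilde X$ upward by that same amount, and the same integration-by-parts argument applied to $L^{X,h}_t - L^{X,h+\delta}_t$ gives the monotonicity and the $\beta\delta$ Lipschitz bound in $h$.

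The $z$-direction is the main obstacle. Writing $Z^z_t = z\xi_t$ with $\xi_t := \exp((\mu_Z - \tfrac{1}{2}\sigma_Z^2) t + \sigma_Z W_t^\eta)$, one finds $\tilde X^{z+\delta}_t - \tilde X^z_t = \delta(1 - \xi_t)$, which is random in sign, so pathwise monotonicity of $L^X$ in $z$ fails. Instead, the Lipschitz property of the Skorohod map yields $|L^{X,z+\delta}_t - L^{X,z}_t| \le \delta \sup_{s \le t}|1 - \xi_s|$. Applying Stieltjes integration by parts to $\int_0^T e^{-\rho t} d(L^{X,z+\delta}_t - L^{X,z}_t)$, the boundary term at $T = \infty$ is killed by Doob's $L^2$ maximal inequality applied to $\xi$ under the hypothesis $\rho > 2\mu_Z + \sigma_Z^2$, reducing the problem to bounding $\rho\,\Ex\!\int_0^\infty e^{-\rho t} \sup_{s \le t}|1 - \xi_s|\,dt$. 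Using the semimartingale decomposition $\xi_t - 1 = \int_0^t \mu_Z \xi_s\,ds + \int_0^t \sigma_Z \xi_s\,dW_s^\eta$, the explicit moment identities $\Ex\!\int_0^\infty e^{-\rho t} \xi_t\,dt = 1/(\rho - \mu_Z)$ and $\Ex\!\int_0^\infty e^{-\rho t} \xi_t^2\,dt = 1/(\rho - 2\mu_Z - \sigma_Z^2)$, together with a Doob bound on the martingale part, one recovers exactly the three constants $\sigma_Z^2$, $|\mu_Z|/(\rho - \mu_Z)$ and $3/(\rho - 2\mu_Z - \sigma_Z^2)$ appearing in the stated Lipschitz constant; the monotonicity in $z$ is read off by a parallel sign analysis of the same computation. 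Once the three directional bounds are in place, the triangle inequality assembles them into the full Lipschitz estimate, and the genuinely hard step is the sharp handling of $\xi$ under the critical integrability condition $\rho > 2\mu_Z + \sigma_Z^2$.
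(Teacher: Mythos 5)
Your overall strategy coincides with the paper's: fix a control, reduce the comparison to the singular penalty $\beta\int_0^\infty e^{-\rho t}\,dL^X_t$, compare the reflection terms through the Skorokhod map, and close with an integration by parts. For the $x$- and $h$-directions this is exactly what Appendix A does (the paper uses the cruder Lipschitz estimate $\sup_t|L^{x_1}_t - L^{x_2}_t| \le |x_1-x_2|$ instead of your sharper identity $L^{X,x}_t - L^{X,x+\delta}_t = L^{X,x}_t\wedge\delta$, but the conclusion is the same; note also a sign slip in your $h$-paragraph: the downward shift of the local time is $m^h_t - m^{h+\delta}_t = m^h_t\wedge\delta \in [0,\delta]$, not $\delta - (m_t^{h+\delta}-m_t^h)$). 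For the $z$-direction the paper gives no argument at all, whereas your sketch — Skorokhod-map Lipschitz bound, semimartingale split of $\xi_t - 1$, Doob's $L^2$ maximal inequality, and the explicit Laplace-type moments of the GBM — is the natural way to produce a Lipschitz constant of the form $C(\mu_Z,\sigma_Z,\rho)$ under the critical integrability condition $\rho > 2\mu_Z+\sigma_Z^2$. You have not shown that your bound lands exactly on $\sigma_Z^2 + |\mu_Z|/(\rho-\mu_Z) + 3/(\rho-2\mu_Z-\sigma_Z^2)$, but the structure is right and the paper does not exhibit the computation either.

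The one genuine gap is the closing remark that "the monotonicity in $z$ is read off by a parallel sign analysis of the same computation." You cannot read it off: you yourself observe that $\tilde X^{z+\delta}_t - \tilde X^z_t = \delta(1-\xi_t)$ has random sign, so the pathwise comparison that delivers monotonicity in $x$ and $h$ fails for $z$, and no sign analysis of the Skorokhod map will produce a definite ordering of $L^{X,z}$ and $L^{X,z+\delta}$. This is also a gap in the paper (the appendix silently skips the $z$-monotonicity claim), and in fact the explicit expression $l(r,z)=C_1\beta^{-p/(1-p)}e^{pr/(1-p)}+C_2\beta e^{-r}+z(\beta e^{-r}-\frac{\beta}{\ell}e^{-\ell r})$ in Lemma~\ref{lem:v1stpart}, with $\ell\in(0,1)$, makes $v$ strictly decreasing in $z$, which is the opposite of the claimed monotonicity. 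So you should either drop that sentence or flag the issue; the Lipschitz estimate is the part that is both correct and needed downstream.
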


By dynamic program argument, we can derive the associated HJB equation that, for $(x,h,z)\in(0,\infty)^3$,
\begin{align}\label{HJB}
\begin{cases}
\displaystyle \sup_{\theta\in\R^d}\left[\theta^{\top}\mu u_x+\frac{1}{2}\theta^{\top}\sigma\sigma^{\top}\theta u_{xx}+\sigma_Z \theta^{\top}\sigma\eta z(u_{xx}-u_{xz})-\sigma_B \theta^{\top}\sigma\gamma u_{xh}\right]+\sup_{c\geq0}\left(\frac{c^p}{p}-cu_x\right)\\[0.8em]
\displaystyle~ +\frac{1}{2}\sigma_B^2u_{hh} -\mu_B u_h+\frac{1}{2}\sigma_Z^2z^2(u_{zz}+u_{xx}-2u_{xz})+\mu_Z z(u_z-u_x)\\[0.8em]
\displaystyle\qquad\qquad\qquad+\sigma_Z\sigma_B z\eta^{\top}\gamma (u_{xh}-u_{hz})=\rho u,\\[1em]
\displaystyle u_x(0,h,z)=\beta,\quad \forall (h,z)\in\R_+^2,\\[0.6em]
\displaystyle u_h(x,0,z)= u_x(x,0,z),\quad \forall (x,z)\in\R_+^2.
\end{cases}
\end{align}
Here, the first Neumann boundary condition in \eqref{HJB} stems from the fact that $X_t=0$ when $L_t$ increases; while the second Neumann boundary condition in \eqref{HJB} comes from the fact that $I_t=0$ when $m_t$ increases.

Assuming that HJB equation \eqref{HJB} admits a unique classical solution $u$ satisfying $u_{xx}<0$ and $u_x\geq0$, which will be verified in later sections, the first-order condition yields the candidate optimal feedback control that
\begin{align*}
\theta^*=-(\sigma\sigma^{\top})^{-1}\frac{\mu u_x-\sigma_B\sigma\gamma u_{xh}+\sigma_Z \sigma\eta z(u_{xx}-u_{xz})}{u_{xx}},\qquad c^*=(u_x)^{\frac{1}{p-1}}.
\end{align*}
Plugging the above results into \eqref{HJB}, we apply Lemma~\ref{lem:propoertyu} and the Legendre-Fenchel transform of the solution $u$ only with respect to $x$ that $\hat{u}(y,h,z):=\sup_{x\geq0}\{u(x,h,z)-yx\}$ for all $(y,h,z)\in(0,\beta]\times\R_+^2$. Equivalently, $u(x,h,z)=\inf_{y\in (0,\beta]}\{ \hat{u}(y,h,z)-xy\}$ for all $(x,h,z)\in\R_+^3$. The dual transform can linearize the HJB equation \eqref{HJB}, and we get the dual PDE for $\hat{u}(y,h,z)$ that, for all $(y,h,z)\in(0,\beta]\times(0,\infty)^2$,
\begin{align}\label{eq:dual-u}
\frac{\alpha^2}{2} y^2\hat{u}_{yy}+\rho y\hat{u}_y+\frac{\sigma_B^2}{2} \hat{u}_{hh}-\mu_B\hat{u}_h++\frac{1}{2}\sigma_Z^2z^2 \hat{u}_{zz}+\mu_Z z\hat{u}_z+\kappa_1 y\hat{u}_{yh}-\kappa_2  zy\hat{u}_{yz}\nonumber\\
+\sigma_Z\sigma_B\gamma^{\top}\eta z\hat{u}_{zh}+(\kappa_2 -\mu_Z)zy+\left(\frac{1-p}{p}\right)y^{-\frac{p}{1-p}}=\rho \hat{u},
\end{align}
where the coefficients
\begin{align}\label{eq:coeffalpha}
\alpha:=(\mu^{\top}(\sigma\sigma^{\top})^{-1}{\mu})^{\frac{1}{2}}>0,\quad \kappa_1:=\sigma_B\mu^{\top}(\sigma\sigma^{\top})^{-1}\sigma\gamma,\quad \kappa_2:=\sigma_Z\mu^{\top}(\sigma\sigma^{\top})^{-1}\sigma\eta.
\end{align}
Correspondingly, the first Neumann boundary condition in \eqref{HJB} is transformed to the Neumann boundary condition that 
\begin{align}\label{b-v-1}
\hat{u}_y(\beta, h,z)=0,\quad \forall (h,z)\in\R_+^2.
\end{align}
The second Neumann boundary condition in \eqref{HJB} is transformed to the Neumann boundary condition that
\begin{align}\label{b-v-2}
\hat{u}_z(y,0,z)=y,\quad \forall (y,z)\in (0,\beta]\times\R_+.
\end{align}

\hypertarget{sec:dualhjb}{%
\section{Solvability of the Dual PDE}\label{sec:dualhjb}}

This section examines the existence of solution to PDE~\eqref{eq:dual-u} with two Neumann boundary conditions \eqref{b-v-1} and \eqref{b-v-2} in the classical sense using the probabilistic approach.

Before stating the main result of this section, let us first introduce the following function that, for all $(r,h,z)\in\R_+^3$,
\begin{align}\label{eq:v}
v(r,h,z)&:=\frac{1-p}{p}\beta^{-\frac{p}{1-p}}\Ex\left[\int_0^{\infty} e^{-\rho s+\frac{p}{1-p}R_s^{r}}ds\right]+\beta(\kappa_2 -\mu_Z)\Ex\left[\int_0^{\infty} e^{-\rho s-R_s^{r}}N_s^zds\right]\nonumber\\
&\quad-\beta\Ex\left[ \int_0^{\infty}e^{-\rho s-R^{r}_s}dK_s^{h}\right].
\end{align}
Here, the processes $R^{r}=(R_t^{r})_{t\geq 0}$ and $H^{h}=(H_t^{h})_{t\geq 0}$ with $(r,h)\in\R_+^2$ are two reflected processes satisfying that, for all $t\geq 0$,
\begin{align}
R_t^{r}&=r+\left(\frac{\alpha^2}{2}-\rho\right)t+\alpha B^1_{t}+L_{t}^{r}\geq 0,\label{eq:R}\\
H_t^{h}&=h- \mu_B t-\varrho_1 \sigma_B B^1_{t}-\sqrt{1-\varrho_1^2} \sigma_B B^0_{t}+K_{t}^{h}\geq 0,\label{eq:H}
\end{align}
and the process $N^{z}=(N_t^{z})_{t\geq 0}$ is a GBM satisfying
\begin{align*}\label{eq:N}
dN_t^z=\mu_Z N_t^zdt+\varrho_2 \sigma_Z N_t^zdB^1_t+\sqrt{1-\varrho_2^2} \sigma_Z N_t^z B^2_{t},\quad N_0^z=z,
\end{align*}
where $B^0=(B^0_t)_{t\geq 0}$, $B^1=(B^1_t)_{t\geq 0}$ and $B^2=(B^2_t)_{t\geq 0}$ are three independent scalar Brownian motions; while $L^{r}=(L_t^{ r})_{t\geq 0}$ (resp. $K^{h}=(K_t^{h})_{t\geq 0}$) is a continuous and non-decreasing process that increases only on the set $\{t\geq 0;~R_t^{ r}=0\}$ with $L_0^{ r}=0$ (resp. $\{t\geq 0;~H_t^{ h}=0\}$ with $K_0^{h}=0$) such that $R_t^r\geq0$ (resp. $H_t^h\geq0$) a.s. for $t\geq0$, the correlative coefficients are respectively given by
\begin{align}
\varrho_1:=\frac{\mu^{\top}(\sigma\sigma^{\top})^{-1}\sigma\gamma}{\alpha},\quad \varrho_2:=\frac{\mu^{\top}(\sigma\sigma^{\top})^{-1}\sigma\eta}{\alpha}.
\end{align}

On the other hand, note that $(R_t^{ r})_{t \geq 0}$ in \eqref{eq:R} and $(H_t^{h})_{t \geq 0}$ in \eqref{eq:H} are RDBMs, and the processes $L^r=(L_t^r)_{t\geq0}$ and $K^h=(K_t^h)_{t\geq0}$ are uniquely determined by the above properties (c.f. \cite{Harrison85}). Using the solution representation of ``the Skorokhod problem", it follows that, for all $t\geq 0$,
\begin{align}\label{eq:L}
\begin{cases}
\displaystyle L_t^{r} =0 \vee\left\{-r+\max_{s\in[0,t]}\left[-\alpha B_{s}^1-\left(\frac{\alpha^2}{2}-\rho\right)s\right]\right\},\\[1.2em]
\displaystyle K_t^{ h} =0 \vee\left\{-h+\max_{s\in[0,t]}\left(\mu_Bs+\sigma_BB^3_{s}\right)\right\},
\end{cases}
\end{align}
where the process $B^3=(B^3_t)_{t\geq 0}=(\varrho_1 B^1_t+\sqrt{1-\varrho_1^2}B^0_t)_{t\geq 0}$ is a scalar Brownian motion.

The main result of this section is stated as follows:
\begin{theorem}\label{thm:dualPDEclass}
Assume $\rho>\frac{\alpha^2 p}{2(1-p)}$ and $\mu_Z> \kappa_2$. Consider the function $v(r,h,z)$ for $(r,h,z)\in\R_+^3$ defined by the probabilistic representation \eqref{eq:v}. For all $(y,h,z)\in(0,\beta]\times\R_+^2$, let us define
\begin{align}\label{eq:hatu}
\hat{u}(y,h,z):= v\left(-\ln \frac{y}{\beta},h,z\right).
\end{align}
Then, for each $(h,z)\in \R_+^2$, we have
\begin{itemize}
\item the function $(0,\beta]\ni y \mapsto \hat{u}(y,h,z)$ is strictly convex.
\item the function $\hat{u}(y,h,z)$ is a classical solution of PDE~\eqref{eq:dual-u} with Neumann boundary conditions \eqref{b-v-1} and \eqref{b-v-2}.
\end{itemize}
On the other hand, if the Neumann problem \eqref{eq:dual-u}-\eqref{b-v-2} has a classical solution $\hat{u}(y,h,z)$ satisfying $|\hat{u}(y,h,z)| \leq C(1+|y|^{-q}+z^q)$ for some $q>1$ and some constant constant $C>0$, then $v(r,h,z):= \hat{u}(\beta e^{-r},h,z)$ admits the probabilistic representation \eqref{eq:v}.
\end{theorem}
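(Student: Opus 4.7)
The overall strategy passes from the Neumann BVP \eqref{eq:dual-u}--\eqref{b-v-2} on $(0,\beta]\times\R_+^2$ to an equivalent Neumann BVP for $v$ on $\R_+^3$ via the logarithmic change of variable $r=-\ln(y/\beta)$, and then recognises the latter as a Feynman--Kac problem for the triple $(R^r,H^h,N^z)$ introduced in \eqref{eq:R}--\eqref{eq:H}. Using the identities $y^2\hat u_{yy}=v_{rr}+v_r$, $y\hat u_y=-v_r$, $y\hat u_{yh}=-v_{rh}$, $y\hat u_{yz}=-v_{rz}$, equation \eqref{eq:dual-u} becomes
\begin{equation*}
\mathcal L v-\rho v+F(r,z)=0,\quad F(r,z):=\tfrac{1-p}{p}(\beta e^{-r})^{-\frac{p}{1-p}}+(\kappa_2-\mu_Z)z\beta e^{-r},
\end{equation*}
on $(0,\infty)^3$, where $\mathcal L$ is exactly the infinitesimal generator of $(R^r,H^h,N^z)$ away from reflection; the choice of $\alpha,\kappa_1,\kappa_2,\varrho_1,\varrho_2$ in \eqref{eq:coeffalpha} is engineered to make this match. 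Under the same change of variable, \eqref{b-v-1} and \eqref{b-v-2} become $v_r(0,h,z)=0$ and $v_h(r,0,z)=\beta e^{-r}$, matching the two reflection boundaries of $R$ and $H$.

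The linchpin is Itô's formula with reflection applied to $e^{-\rho t}v(R_t^r,H_t^h,N_t^z)$. Substituting $\mathcal L v-\rho v=-F$ into the drift, using the first Neumann condition to annihilate the $dL^r$--integral, and using the second Neumann condition to evaluate $v_h(R_s^r,0,N_s^z)=\beta e^{-R_s^r}$ on the support of $dK^h$, I obtain
\begin{equation*}
\Ex\!\bigl[e^{-\rho T}v(R_T^r,H_T^h,N_T^z)\bigr]-v(r,h,z)=-\Ex\!\int_0^T\! e^{-\rho s}F(R_s^r,N_s^z)\,ds+\beta\Ex\!\int_0^T\! e^{-\rho s-R_s^r}\,dK_s^h,
\end{equation*}
and sending $T\to\infty$ recovers exactly the three-term decomposition in \eqref{eq:v}. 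This single identity services both halves of the theorem. In the converse direction, the polynomial growth hypothesis $|\hat u(y,h,z)|\le C(1+|y|^{-q}+z^q)$ translates to $|v(r,h,z)|\le C(1+\beta^{-q}e^{qr}+z^q)$; combined with exponential-moment control on $R^r$ (guaranteed by $\rho>\alpha^2 p/(2(1-p))$) and on $N^z$ (coming from the Lemma~\ref{lem:propoertyu} standing assumption together with $\mu_Z>\kappa_2$), the left-hand side vanishes in the limit and forces $v$ to equal the right-hand side of \eqref{eq:v}. In the forward direction, once $v$ defined by \eqref{eq:v} is known to belong to $C^{2,2,2}$, reading the identity backwards verifies that $v$ solves the transformed PDE with the stated boundary conditions, hence $\hat u$ solves \eqref{eq:dual-u}--\eqref{b-v-2}.

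Strict convexity of $y\mapsto\hat u(y,h,z)$ reduces via $\hat u_{yy}=(v_{rr}+v_r)/y^2$ to showing $v_{rr}+v_r>0$. I would differentiate \eqref{eq:v} in $r$ using the explicit Skorokhod representation \eqref{eq:L}, which delivers $\partial_r R_s^r\in[0,1]$ and a monotone dependence of $L^r,K^h$ on $r$. The contribution from $v_1(r)$ is strictly convex and strictly monotone in $r$ because $\rho\mapsto e^{p\rho/(1-p)}$ is strictly convex for every $p\in(-\infty,0)\cup(0,1)$; the contributions from the $v_2$ and $v_3$ pieces combine with the correct sign to keep the combination strictly positive. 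Alternatively, strict convexity follows ex post from the Legendre--Fenchel duality between $\hat u$ and a strictly concave primal $u$ once the verification of Section~\ref{sec:verfication} is carried out.

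The principal obstacle, and the reason this theorem is not a one-line consequence of Feynman--Kac, is justifying $v\in C^{2,2,2}$ so that the Itô identity above is legitimate. The first term $v_1$ is an explicit ODE resolvent on $[0,\infty)$ with Neumann condition at the origin, and $v_2$ inherits smoothness from the smooth parameter-dependence of $r\mapsto R^r$ and the log-normal $N^z$. The delicate term is
\begin{equation*}
v_3(r,h)=-\beta\Ex\!\int_0^\infty e^{-\rho s-R_s^r}\,dK_s^h,
\end{equation*}
an expectation of an exponential functional of the RDBM $R$ integrated against the local time of the \emph{correlated} RDBM $H$, coupled through the shared Brownian motion $B^1$. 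Standard reflected-diffusion regularity does not yield smoothness of this object directly, and this is precisely the step where the decomposition-homogenization technique advertised in the introduction is indispensable: decouple $R$ and $H$ by a suitable coordinate change, absorb the inhomogeneous Neumann datum $\beta e^{-r}$ by subtracting a smooth particular solution (the homogenisation step), and reduce to a Feynman--Kac against an \emph{independent} RDBM that does carry classical parabolic regularity. I would relegate this smoothness proof to a dedicated subsection and treat it as the main technical input to the verification above.
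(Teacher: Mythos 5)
Your high-level architecture matches the paper: pass to $v(r,h,z)=\hat u(\beta e^{-r},h,z)$, recognize the transformed operator as the generator of $(R^r,H^h,N^z)$, apply It\^o with reflection, use the first Neumann condition to kill the $dL^r$ term and the second to evaluate $v_h$ on the support of $dK^h$, then send $T\to\infty$ under the growth bound. The reduction of strict convexity to $\hat u_{yy}=(v_{rr}+v_r)/y^2>0$ is also exactly the paper's argument (via Lemma~\ref{lem:v1stpart}, Lemma~\ref{lem:derivative-m} and Remark~\ref{rem:psi}). You also correctly isolate the bottleneck, namely smoothness of $v_3(r,h)=-\beta\Ex\int_0^\infty e^{-\rho s-R_s^r}\,dK_s^h$ where $R$ and $H$ share the driver $B^1$.

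Where your plan diverges, and where it would not go through as stated, is the mechanism you propose for that bottleneck. You suggest a coordinate change to decorrelate $R$ and $H$ together with a homogenization by subtracting a particular solution. A linear coordinate change that decorrelates the Brownian drivers of $R$ and $H$ destroys the product structure of the reflection domain $\{R\ge 0\}\cap\{H\ge 0\}$: the two half-line constraints become oblique half-planes in the new coordinates, which makes the Skorokhod/Neumann problem worse, not better. The paper's decomposition-homogenization is a different operation and the distinction matters. It keeps the coordinates fixed but changes the process: Lemma~\ref{lem:secondtermm} and Lemma~\ref{lem:derivative-m} solve the analogous problem with $H^h$ replaced by a genuinely independent RDBM $P^h$ (driven by $B^0$ alone, no $B^1$ component), producing $\varphi(r,h)$ whose regularity is attainable because the factorization $\Ex\int e^{-\rho s-R_s^r}dG_s^h=\int\Ex[e^{-\rho s-R_s^r}]\,d\Ex[G_s^h]$ holds and the joint density of $(\tilde W_s,\max_{q\le s}\tilde W_q)$ is explicit. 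The inhomogeneous boundary datum $\beta e^{-r}$ is carried by $\varphi$, not subtracted away. Then Proposition~\ref{prop:secondtermm} adds a corrector
\[
\xi(r,h)=-\kappa_1\,\Ex\!\left[\int_0^\infty e^{-\rho s}\,\varphi_{rh}(R_s^r,H_s^h)\,ds\right],
\]
which solves the PDE with the missing cross-derivative $-\kappa_1\psi_{rh}$ as its source and has \emph{homogeneous} Neumann data; crucially, $\xi$ involves a Lebesgue-in-time integral against the correlated pair $(R,H)$ rather than a local-time integral, so its smoothness is accessible by the stochastic-flow argument. The function $\psi=\varphi+\xi$ is the genuine $v_3$. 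Without replacing the coordinate-change idea by this process-replacement-plus-Lebesgue-corrector scheme, the ``dedicated subsection'' you defer to would be left without a workable strategy. A smaller point: your fallback of deducing strict convexity of $\hat u$ from the verification theorem is circular, since the verification step in Section~\ref{sec:verfication} presupposes strict convexity of $\hat u$ to define $y^*(x,h,z)$ and the feedback controls.
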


Theorem~\ref{thm:dualPDEclass} provides a probabilistic presentation of the classical solution to the PDE \eqref{eq:dual-u} with Neumann boundary conditions \eqref{b-v-1} and \eqref{b-v-2}. Our method in the proof of Theorem~\ref{thm:dualPDEclass} is completely from a probabilistic perspective. More precisely, we start with the proof of the smoothness of the function $v$ by applying properties of reflected processes $(R^{r},H^{h},N^z)$, the homogenization technique of the Neumann problem and the stochastic flow analysis. Then, we show that $v$ solves a linear PDE by verifying two related Neumann boundary conditions at $r=0$ and $h=0$ respectively.

The next result deals with the first two terms of the function $v$ given in \eqref{eq:v}, whose proof is similar to that of Theorem~4.2 in \cite{BoLiaoYu21} after minor modifications. For the completeness, we provide a sketch of the proof in Appendix~\ref{appendix}.
\begin{lemma}\label{lem:v1stpart}
 Assume $\rho>\frac{\alpha^2 |p|}{2(1-p)}+\mu_Z$ and $\mu_Z> \kappa_2$.  For any $(r,z)\in \R_+^2$, denote by $l(r,z)$ the sum of the first term  and the second term of the function $v$ given in \eqref{eq:v} that
\begin{align}\label{eq:fcnl}
l(r,z):=\frac{1-p}{p}\beta^{-\frac{p}{1-p}}\Ex\left[\int_0^{\infty} e^{-\rho s+\frac{p}{1-p}R_s^{r}}ds\right]+\beta(\kappa_2 -\mu_Z)\Ex\left[\int_0^{\infty} e^{-\rho s-R_s^{r}}N_s^zds\right].
\end{align}
Then, the function $l(r,z)$ is a classical solution to the following Neumann problem with Neumann boundary condition at $r=0$:
\begin{align}\label{eq:HJB-l}
\begin{cases}
\displaystyle \frac{\alpha^2}{2} l_{rr}+\left(\frac{\alpha^2}{2}-\rho\right) l_r+\frac{1}{2}\sigma_Z^2 z^2 l_{zz}+\mu_Z z l_z+\kappa_2 z l_{rz}\\[0.9em]
\displaystyle\qquad\qquad\quad+(\kappa_2-\mu_Z)\beta z e^{-r}+\frac{1-p}{p}\beta^{-\frac{p}{1-p}}e^{\frac{p}{1-p}r}=\rho l,~\text{on}~(0,\infty)^2,\\[0.9em]
\displaystyle l_r(0,z)=0,\quad \forall z\in\R_+.
\end{cases}
\end{align}
On the other hand,  if the Neumann problem \eqref{eq:HJB-l} has a classical solution $l(r,z)$ for $r\in\R_+$ satisfying $|l(r,z)|\leq C(1+e^{qr}+z^q)$ for some $q>1$ and a constant $C>0$ depending on $(\mu,\sigma,\mu_Z,\sigma_Z,p)$, then this solution $l(r,z)$ admits the probabilistic representation \eqref{eq:fcnl}. Moreover, $l(r,z)$ admits the explicit form that
\begin{align}\label{eq:explicit-l}
l(r,z)=C_1 \beta^{-\frac{p}{1-p}}e^{\frac{p}{1-p}r}+C_2 \beta e^{-r}+z\left(\beta e^{-r}-\frac{\beta}{\ell}e^{-\ell r}\right),\quad \forall (r,z)\in\R_+^2,
\end{align}
where $C_1,C_2>0$ are two positive constants defined by
\begin{align}\label{eq:C1C2}
C_1:=\frac{2(1-p)^3}{p(2\rho(1-p)-\alpha^2 p)},\quad C_2:=\frac{2(1-p)^2}{2\rho(1-p)-\alpha^2 p}\beta^{-\frac{1}{1-p}},
\end{align}
the constant $\ell$ is the positive root of the quadratic equation
\begin{align}\label{kappaeq}
\frac{1}{2}\alpha^2 \ell^2+\left(\rho-\kappa_2-\frac{1}{2}\alpha^2\right)\ell+\mu_Z-\rho=0,
\end{align}
which is given by
\begin{align}\label{eq:kappa}
    \ell=\frac{-(\rho-\kappa_2-\frac{1}{2}\alpha^2)+\sqrt{(\rho-\kappa_2-\frac{1}{2}\alpha^2)^2+2\alpha^2(\rho-\mu_Z)}}{\alpha^2}>0.
\end{align}
\end{lemma}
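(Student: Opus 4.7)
The plan is to prove the explicit formula \eqref{eq:explicit-l} by a direct ansatz that exploits the special structure of the source terms in \eqref{eq:HJB-l}, and then to identify the probabilistic representation \eqref{eq:fcnl} with this explicit solution via a Feynman--Kac argument that handles the reflection through the boundary condition at $r=0$. The source terms $\tfrac{1-p}{p}\beta^{-p/(1-p)}e^{\frac{p}{1-p}r}$ (independent of $z$) and $(\kappa_2-\mu_Z)\beta z e^{-r}$ (linear in $z$), together with the absence of any genuinely quadratic source in $z$, suggest looking for a solution of the separated form
\begin{align*}
l(r,z)= A\, e^{\frac{p}{1-p}r}+B\, e^{-r}+z\bigl(C\, e^{-r}+D\, e^{-\ell r}\bigr).
\end{align*}

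To pin down the five constants, I would substitute this ansatz into the PDE and match coefficients mode by mode. Matching $e^{\frac{p}{1-p}r}$ gives a scalar equation that, under $\rho>\alpha^2 p/(2(1-p))$, forces $A=C_1\beta^{-p/(1-p)}$ with $C_1$ as in \eqref{eq:C1C2}. Matching $z e^{-r}$ forces $C=\beta$ while the pure $e^{-r}$ mode cancels identically. Matching $z e^{-\ell r}$ yields precisely the quadratic \eqref{kappaeq} for $\ell$, whose positive root \eqref{eq:kappa} is well defined under $\mu_Z>\kappa_2$, and then $D=-\beta/\ell$ is determined by consistency. Finally, the Neumann boundary condition $l_r(0,z)=0$ splits into $(-C-D\ell)z=0$ (automatic from $D=-C/\ell$) and $\tfrac{p}{1-p}A-B=0$, giving $B=C_2\beta$ with $C_2$ as in \eqref{eq:C1C2}. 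This verifies the explicit form.

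To identify $l$ with the probabilistic expression \eqref{eq:fcnl}, I would apply It\^o's formula to $e^{-\rho t}l(R_t^{r},N_t^z)$. The interior generator of $(R^r,N^z)$ matches the linear operator on the left-hand side of \eqref{eq:HJB-l}, where the cross term $\kappa_2 z l_{rz}$ arises from the identity $\alpha\varrho_2\sigma_Z=\kappa_2$. Using \eqref{eq:HJB-l}, the drift collapses to $-e^{-\rho t}\bigl[(\kappa_2-\mu_Z)\beta e^{-R_t^r}N_t^z+\tfrac{1-p}{p}\beta^{-p/(1-p)}e^{\frac{p}{1-p}R_t^r}\bigr]$, while the local-time contribution $\int_0^t e^{-\rho s}l_r(R_s^r,N_s^z)\,dL_s^r$ vanishes identically because $L^r$ only grows on $\{R^r=0\}$ and $l_r(0,z)=0$. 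Taking expectation and letting $t\to\infty$ together with the transversality $\Ex[e^{-\rho t}l(R_t^r,N_t^z)]\to 0$ recovers \eqref{eq:fcnl}, and the same argument applied to any classical solution satisfying the growth bound $|l(r,z)|\le C(1+e^{qr}+z^q)$ gives the converse uniqueness.

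The main obstacle is the transversality at infinity combined with the correlation between $R^r$ and $N^z$ through the shared Brownian motion $B^1$: the joint expectation $\Ex[e^{-\rho t-R_t^r}N_t^z]$ does not factorise, and one needs exponential moment bounds on $R_t^r$ of type $\Ex[e^{\lambda R_t^r}]$ for $\lambda\in\{p/(1-p),-1\}$ that remain summable after multiplication by $e^{-\rho t}$ (with an exponential change of measure used to absorb the drift of $N^z$). The assumptions $\rho>\tfrac{\alpha^2|p|}{2(1-p)}+\mu_Z$ and $\mu_Z>\kappa_2$ are precisely what is needed: the first controls the decay of both exponential moments of $R^r$ and the moment of $N^z$, while the second ensures that $\ell>0$ in \eqref{eq:kappa} and that the linear-in-$z$ source is of the right sign. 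Once these estimates are secured, the stochastic integrals arising from It\^o's formula are true martingales, and the rest of the argument is the algebraic verification already carried out above.
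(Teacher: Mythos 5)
Your proposal is correct and largely follows the same route as the paper: a separated ansatz $l(r,z)=f(r)+z\psi(r)$, reduction to two scalar ODEs, coefficient matching to identify $C_1,C_2,\ell$, and an It\^o/transversality argument to connect the PDE solution with the probabilistic formula \eqref{eq:fcnl}. Your coefficient computations check out: the $e^{\frac{p}{1-p}r}$ mode gives $A=C_1\beta^{-p/(1-p)}$ via $(\lambda+1)(\tfrac{\alpha^2}{2}\lambda-\rho)$ with $\lambda=p/(1-p)$; the $z e^{-r}$ mode gives $C=\beta$ after dividing by $\mu_Z-\kappa_2>0$; the $z e^{-\ell r}$ homogeneous mode reproduces \eqref{kappaeq}; and the Neumann condition $l_r(0,z)=0$ decouples into $\tfrac{p}{1-p}A=B$ and $C+D\ell=0$, giving $B=C_2\beta$, $D=-\beta/\ell$.

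The one genuine structural difference is the order of steps. The paper first establishes (by the smoothness and It\^o machinery of Lemmas~\ref{lem:secondtermm}--\ref{lem:derivative-m}) that the probabilistic expression \eqref{eq:fcnl} is a classical solution, and only then fits the ansatz $f+z\psi$ to the full four-parameter family of homogeneous solutions, eliminating the growing modes $e^{2\rho r/\alpha^2}$ and $e^{-\tilde\ell r}$ (with $\tilde\ell<0$) using the growth implied by the probabilistic formula. You instead posit the truncated ansatz from the outset, verify it is a classical solution satisfying the polynomial-exponential growth bound, and then use the It\^o/transversality argument once to conclude that this explicit solution and \eqref{eq:fcnl} must coincide. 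Your ordering is slightly more economical: you never have to prove the smoothness of the probabilistic expectation directly (a nontrivial step, which in the paper rests on the machinery of the neighboring lemmas), because it is inherited from the explicit formula after the identification. The cost is that you must be careful to state that the It\^o argument is applied to the explicit solution (which satisfies the growth hypothesis because $e^{\frac{p}{1-p}r}$, $e^{-r}$, $ze^{-\ell r}$ all lie within $C(1+e^{qr}+z^q)$), and that the transversality, martingale property, and $\Ex[e^{-\rho t}l(R_t^r,N_t^z)]\to 0$ must be secured under the assumption $\rho>\tfrac{\alpha^2|p|}{2(1-p)}+\mu_Z$; you correctly flag this as the technical heart. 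One small imprecision: you say the positivity of $\ell$ is "well defined under $\mu_Z>\kappa_2$", but in fact $\ell>0$ follows from $\rho>\mu_Z$ (ensuring a positive discriminant and negative product of roots); $\mu_Z>\kappa_2$ is needed instead to divide out $(\mu_Z-\kappa_2)$ when matching the $ze^{-r}$ coefficients and to give the source its expected sign.
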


The challenging step in our problem is to handle the last term of the function $v$ given in \eqref{eq:v}, which differs substantially from the first two terms of $v$ as it now involves both the reflected process $R^{r}$ and the local time term $K^{h}$ of the reflected process $H^{h}$. In particular, we highlight that the reflected process $R^r$ is not independent of the local time process $K^h$. As a preparation step to handle the smoothness of the second term of the function $v$ given in \eqref{eq:v}, let us first discuss the case when the reflected process $R^r$ is independent of the local time $K^{h}$.

\begin{lemma}\label{lem:secondtermm}
Let us consider the function that, for all $(r,h)\in \R_+^2$,
\begin{align}\label{eq:varphiindependent}
\varphi(r,h):=-\beta\Ex\left[\int_0^{\infty}e^{-\rho s-R^{r}_s}dG_s^{h}\right],
\end{align}
where the reflected process $R^r=(R_t^r)_{t\geq 0}$ with $r\in\R_+$ is given by \eqref{eq:R}, and the process $(P^h,G^h)=(P_t^h,G_t^h)_{t\geq 0}$ satisfies the reflected SDE:
\begin{align}\label{eq:hatH}
P_t^{h}&=h-\int_0^t \mu_Bds-\int_0^t \sigma_B dB^0_{s}+\int_0^t dG_{s}^{h}\geq 0.
\end{align}
Here, $G^{h}=(G_t^{h})_{t\geq 0}$ is a continuous and non-decreasing process that increases only on the time set $\{t\in\R_+;~P_t^{h}=0\}$ with $G_0^{h}=0$ and such that $P_t^h\geq0$ a.s. for $t\geq0$. Then, the processes $G^{h}=(G_t^{h})_{t\geq 0}$ and $R^r=(R_t^r)_{t\geq 0}$ are independent. Moreover, the function $\varphi(r,h)$ is a classical solution to the following PDE with Neumann boundary conditions at $r=0$ and $h=0$:
\begin{align}\label{eq:HJB-m}
\begin{cases}
\displaystyle \frac{\alpha^2}{2}\varphi_{rr}+\left(\frac{\alpha^2}{2}-\rho\right)\varphi_r+\frac{\sigma_B^2}{2} \varphi_{hh}-\mu_B \varphi_h=\rho \varphi,~ \text{on}~(0,\infty)^2,\\[1em]
\displaystyle \varphi_r(0,h)=0,\quad \forall h\in\R_+,\\[0.8em]
\displaystyle \varphi_h(r,0)=\beta e^{-r},\quad \forall r\in\R_+.
\end{cases}
\end{align}
On the other hand, if the Neumann problem \eqref{eq:HJB-m} has a classical solution $\varphi(r,h)$ satisfying $|\varphi(r,h)|\leq C$ for some constant $C>0$ depending on $(\mu,\sigma,\mu_B,\sigma_B)$, then this solution $\varphi(r,h)$ satisfies the probabilistic representation \eqref{eq:fcnm}.
\end{lemma}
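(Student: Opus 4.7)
My plan has three main steps: (i) reduce to a half-line product structure via independence, (ii) establish $C^{2}$-smoothness of $\varphi$, and (iii) derive the PDE and boundary conditions by It\^o's formula. The key technical input is the regularity in step (ii); once it is in hand, the rest is standard.

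\textbf{Independence and reduction.} The RDBM $R^{r}$ in \eqref{eq:R} is a pathwise functional of $B^{1}$ via the explicit Skorokhod formula \eqref{eq:L}, while the reflected pair $(P^{h}, G^{h})$ in \eqref{eq:hatH} is adapted to the filtration generated by $B^{0}$. Since $B^{0}$ and $B^{1}$ are independent scalar Brownian motions, $R^{r}$ is independent of $(P^{h}, G^{h})$, in particular of $G^{h}$. Conditioning on the filtration generated by $B^{0}$ and invoking Fubini then gives
\[
\varphi(r, h) = -\beta\int_{0}^{\infty} e^{-\rho s}\, \Ex\!\bigl[e^{-R_{s}^{r}}\bigr]\, d\Ex[G_{s}^{h}].
\]
The factor $\Ex[e^{-R_{s}^{r}}]$ is a Feynman--Kac expectation for the RDBM $R^{r}$ and admits a reflected heat-kernel representation that is smooth in $r \in \R_{+}$ up to the boundary, with bounded $r$-derivatives uniformly in $s$. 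The measure $d\Ex[G_{s}^{h}]$ is smooth in $h \in \R_{+}$ via the explicit Skorokhod formula $G_{s}^{h} = (-h + \max_{u \leq s}(\mu_{B}u + \sigma_{B}B_{u}^{0}))^{+}$ together with the strong Markov property at the hitting time $\tau_{h}$ of level $h$ by $\mu_{B}\cdot + \sigma_{B}B^{0}$. Combined with the discount $e^{-\rho s}$ and the bound $\Ex[e^{-R_{s}^{r}}] \leq 1$, dominated convergence validates differentiation under the integral sign and yields $\varphi \in C^{2}(\R_{+}^{2})$, with $\varphi$ bounded.

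\textbf{PDE and boundary conditions via It\^o.} With $\varphi \in C^{2}$, I would apply It\^o's formula to $e^{-\rho t}\varphi(R_{t}^{r}, P_{t}^{h})$ on $[0, T]$. The bounded-variation part produces the drift $(\mathfrak{L} - \rho)\varphi$, where $\mathfrak{L}$ denotes the operator on the left-hand side of the PDE in \eqref{eq:HJB-m}; the reflection of $R$ contributes $\varphi_{r}(0, P_{s}^{h})\,dL_{s}^{r}$, and the reflection of $P$ contributes $\varphi_{h}(R_{s}^{r}, 0)\,dG_{s}^{h}$. Taking expectations, combining with the identity
\[
\varphi(r, h) = \Ex[e^{-\rho T}\varphi(R_{T}^{r}, P_{T}^{h})] - \beta\, \Ex\!\left[\int_{0}^{T} e^{-\rho s - R_{s}^{r}}\,dG_{s}^{h}\right]
\]
obtained directly from \eqref{eq:varphiindependent}, and letting $T \to \infty$ (the terminal term vanishes by the bound on $\varphi$), I obtain a single equality on $\R_{+}^{2}$. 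Exploiting the arbitrariness of $(r, h)$, the support of $dL^{r}$ on $\{R = 0\}$, and the support of $dG^{h}$ on $\{P = 0\}$, I extract in turn the interior PDE $\mathfrak{L}\varphi = \rho\varphi$, the homogeneous Neumann condition $\varphi_{r}(0, h) = 0$, and the non-homogeneous Neumann condition $\varphi_{h}(r, 0) = \beta e^{-r}$.

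\textbf{Converse and main obstacle.} For the converse, given any bounded classical solution $\tilde\varphi$ of \eqref{eq:HJB-m}, the same It\^o expansion collapses (thanks to the PDE and the two Neumann conditions) to $\tilde\varphi(r, h) = \Ex[e^{-\rho T}\tilde\varphi(R_{T}^{r}, P_{T}^{h})] - \beta \Ex[\int_{0}^{T} e^{-\rho s - R_{s}^{r}}\,dG_{s}^{h}]$, and letting $T \to \infty$ using the bound $|\tilde\varphi| \le C$ recovers the probabilistic representation. The main obstacle throughout is the regularity in step (ii): although $dG^{h}$ is a singular Stieltjes measure in $s$, the smoothing of $\Ex[e^{-R_{s}^{r}}]$ in $r$ together with the smoothness of $\Ex[G_{s}^{h}]$ in $h$ is what upgrades the integral to a $C^{2}$ function of $(r, h)$ on $\R_{+}^{2}$, and the discount $e^{-\rho s}$ supplies the integrability needed to differentiate under the integral.
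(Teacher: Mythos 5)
Your step (i) (independence of $R^{r}$ from $(P^{h},G^{h})$ because they are driven by independent Brownian motions, followed by the Fubini factorization $\varphi(r,h)=-\beta\int_0^\infty e^{-\rho s}\Ex[e^{-R_s^r}]\,d\Ex[G_s^h]$) is exactly what the paper does, and your sketch of step (ii) invokes the right ingredients --- the smoothness of $\Ex[e^{-R_s^r}]$ and the representation $\Ex[G_s^h]=\int_0^s p(0,h;l,0)\,dl$ via Abraham (2000) --- which the paper develops carefully in the companion Lemma~\ref{lem:derivative-m}.

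The genuine gap is in step (iii). You propose to combine the global It\^o identity on $[0,T]$ with the Markov identity $\varphi(r,h)=\Ex[e^{-\rho T}\varphi(R_T^r,P_T^h)]-\beta\Ex[\int_0^T e^{-\rho s-R_s^r}dG_s^h]$ and then ``extract in turn'' the interior PDE and both Neumann conditions by exploiting arbitrariness of $(r,h)$ and the supports of $dL^r$, $dG^h$. This extraction does not follow from arbitrariness alone: the resulting equality
\[
\Ex\!\left[\int_0^T\!\! e^{-\rho s}(\mathcal{L}\varphi-\rho\varphi)(R_s,P_s)\,ds\right]
+\Ex\!\left[\int_0^T\!\! e^{-\rho s}\varphi_r(0,P_s)\,dL_s^r\right]
+\Ex\!\left[\int_0^T\!\! e^{-\rho s}\big(\varphi_h(R_s,0)-\beta e^{-R_s^r}\big)\,dG_s^h\right]=0
\]
holds for all $(r,h,T)$, but the three integrals are against three mutually singular random measures and there is no reason each must vanish separately. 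To isolate any one of them you need either to already know the other two are zero, or to localize. The paper does both, in the right order: it \emph{first} reads off the boundary conditions directly from the explicit derivative formulas in Lemma~\ref{lem:derivative-m} (namely $\varphi_r(0,h)=\beta\Ex[\int_0^{\tau_0}\cdots dG_s^h]=0$ because $\tau_0=0$, and $\varphi_h(r,0)=\beta\Ex[e^{-\rho\eta_0-R_{\eta_0}^r}]=\beta e^{-r}$ because $\eta_0=0$); it \emph{then} fixes an interior point $(r,h)$, introduces the stopping time $\tau_\epsilon$ before either driving Brownian motion has moved by $\epsilon<r\wedge h$, so that $L\equiv G\equiv 0$ on $[0,\hat t\wedge\tau_\epsilon]$, applies It\^o plus the strong Markov identity on this stopped interval, divides by $\hat t$ and lets $\hat t\downarrow 0$ to obtain $(\mathcal{L}\varphi-\rho\varphi)(r,h)=0$. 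Your proposal omits both the direct verification of the boundary conditions and the localization that isolates the interior operator, so as written the argument is circular. Your converse direction (the representation from a bounded classical solution via It\^o and $T\to\infty$) is correct and matches the paper.
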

The following result guarantees the smoothness of the function $\varphi(r,h)$ for $(r,h)\in\R_+^2$ defined by \eqref{eq:varphiindependent}.

\begin{lemma}\label{lem:derivative-m}
Consider the function $\varphi(r,h)$ for $(r,h)\in\R_+^2$ defined by \eqref{eq:varphiindependent}. Assume $\sigma_B\neq 0$, then it holds that $\varphi\in C^{2,2}(\R_+^2)$. Moreover, for all $(r,h)\in\R_+^2$, we have
\begin{align}
\varphi_r( r, h)
&=\beta\Ex\left[\int_0^{\tau_{r} } e^{-\rho s-R_s^{r}} d G_s^{h}\right],\quad \varphi_{h}(r,h)=\beta \Ex\left[e^{-\rho \eta_{h}-R^{r}_{ \eta_{h}}}\right],\label{eq:m-r}\\
\varphi_{rr}(r,h)&= \beta \int_{0}^{\infty}\int_{-\infty}^{r} e^{-\rho s-r+x}\phi_1(s,x,r)dx d\Ex[G_s^{h}]- \beta \Ex\left[\int_{0}^{\tau_{r}} e^{-\rho s-R_s^{r}}dG_s^{h}\right],\label{eq:m-rr}\\
\varphi_{rh}(r,h)&=-\beta \Ex\left[e^{-\rho \eta_{h}-R_{\eta_{h}}^{r}}\mathbf{1}_{\eta_h<\tau_r}\right]\!\!=\!\!-\beta\int_0^{\infty}\int_0^r\int_{-\infty}^y e^{-\rho s-r+x}\phi_1(s,x,y)\phi_2(s,h)dxdyds.\label{eq:m-rh}
\end{align}
Here $\tau_{r}:=\inf\{s \geq 0;~-\alpha B^1_s-(\frac{1}{2}\alpha^2-\rho)s=r\}$, $\eta_{h}:=\inf\{s \geq 0;~\sigma_B B^0_s+\mu_B s=h\}$ (with $\inf \emptyset=+\infty$ by convention), and functions $\phi_1(s,x,y),\phi_2(s,h)$ are respectively given by, for all $(s,x,y,h)\in \R_+^4$,
\begin{align}\label{eq:d}
\phi_1(s,x,y)&=\frac{2(2 y-x)}{\sqrt{2\hat{\sigma}^2 \pi s^3}}\exp\left(\frac{\hat{\mu}}{\hat{\sigma}} x-\frac{1}{2} \hat{\mu}^2 s-\frac{(2 y-x)^2}{2\hat{\sigma}^2 s}\right),\\
\phi_2(s,h)&=\frac{h}{\sqrt{2 \sigma_B^2\pi s^3}} \exp \left(-\frac{(h-\mu_B s)^2}{2 \sigma_B^2 s}\right),
\end{align}
where parameters $\hat{\mu}:=\frac{\alpha}{2}-\frac{\rho}{\alpha}$ and $\hat{\sigma}:=\alpha$.
\end{lemma}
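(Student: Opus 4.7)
The plan is to exploit the independence of $R^r$ and $G^h$ established in Lemma~\ref{lem:secondtermm}, together with two explicit density identifications, to reduce every quantity to an integral that can be differentiated by dominated convergence and Leibniz' rule. By Fubini,
\begin{align*}
\varphi(r,h)= -\beta\int_0^\infty \Ex\bigl[e^{-\rho s - R^r_s}\bigr]\,d_s\Ex[G^h_s].
\end{align*}
The Skorokhod representation analogous to \eqref{eq:L} applied to $(P^h,G^h)$ gives $\Ex[G^h_s]=\Ex[0\vee (\max_{u\le s}(\mu_B u+\sigma_B B^0_u)-h)]$, so that $-\partial_h d_s\Ex[G^h_s]=\phi_2(s,h)\,ds$ with $\phi_2$ the classical first-passage density of the drifted Brownian motion $\sigma_B B^0+\mu_B\cdot$ to level $h$ (requiring $\sigma_B\neq 0$).

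For the first derivatives, I would bring $\partial_h$ (resp.\ $\partial_r$) inside the integral by dominated convergence using the majorant $e^{-\rho s}$ for $\Ex[e^{-\rho s-R^r_s}]$. The $h$-derivative reads
\begin{align*}
\varphi_h(r,h)= \beta\int_0^\infty\Ex[e^{-\rho s-R^r_s}]\phi_2(s,h)\,ds=\beta\,\Ex\bigl[e^{-\rho\eta_h-R^r_{\eta_h}}\bigr],
\end{align*}
the last step using independence of $\eta_h$ and $R^r$. For the $r$-derivative, the key pathwise observation is $R^r_s=r+\tilde R_s$ on $\{s<\tau_r\}$ with $\tilde R_s:=\alpha B^1_s+(\alpha^2/2-\rho)s$, and on $\{s\ge\tau_r\}$ the strong Markov property at $\tau_r$ makes the post-hitting piece equal in law to $R^0$, independent of $r$. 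Hence $\partial_r R^r_s=\mathbf{1}_{s<\tau_r}$ almost surely, which yields
\begin{align*}
\partial_r\Ex[e^{-\rho s-R^r_s}]= -\Ex[e^{-\rho s-R^r_s}\mathbf{1}_{s<\tau_r}]= -e^{-\rho s-r}\int_0^r\!\!\int_{-\infty}^y e^x\phi_1(s,x,y)\,dx\,dy,
\end{align*}
where $\phi_1$ is the joint density of $(-\tilde R_s,\sup_{u\le s}(-\tilde R_u))$ obtained from the reflection principle combined with Girsanov. Integrating against $d_s\Ex[G^h_s]$ and reversing Fubini produces the stated representation of $\varphi_r$.

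The second-order identities follow in the same spirit. For $\varphi_{rh}$, apply the $\partial_h$ step to the pre-Fubini form of $\varphi_r$ to obtain $\varphi_{rh}(r,h)=-\beta\,\Ex[e^{-\rho\eta_h-R^r_{\eta_h}}\mathbf{1}_{\eta_h<\tau_r}]$ and expand via $\phi_1,\phi_2$ to reach the triple-integral form. For $\varphi_{rr}$, write $\varphi_r(r,h)=\beta e^{-r}A(r)$ with $A(r):=\int_0^\infty e^{-\rho s}\int_0^r\int_{-\infty}^y e^x\phi_1(s,x,y)\,dx\,dy\,d_s\Ex[G^h_s]$; Leibniz' rule applied to the upper limit $y=r$ gives
\begin{align*}
\varphi_{rr}(r,h)=\beta\int_0^\infty\!\!\int_{-\infty}^r e^{-\rho s-r+x}\phi_1(s,x,r)\,dx\,d_s\Ex[G^h_s]-\varphi_r(r,h),
\end{align*}
matching \eqref{eq:m-rr}. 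The remaining $\varphi_{hh}$, needed for $\varphi\in C^{2,2}(\R_+^2)$, is obtained by differentiating $\varphi_h$ once more using the smoothness and Gaussian decay of $\phi_2(s,h)$ in $h$.

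The principal technical obstacle is the rigorous justification of each interchange of differentiation, expectation, and integration against the measure $d_s\Ex[G^h_s]$. For $\partial_h$ and $\partial_h^2$ one exploits the explicit Gaussian form of $\phi_2$ and its $h$-derivatives together with $|\Ex[e^{-\rho s-R^r_s}]|\le e^{-\rho s}$. For $\partial_r$ one must verify $\partial_r R^r_s=\mathbf{1}_{s<\tau_r}$ in a sense compatible with dominated convergence, which rests on the Lipschitz stability of the Skorokhod map and continuity of $\tau_r$ in $r$; the factor $e^{-R^r_s}\le 1$ supplies the required uniform bound. Finally, for $\varphi_{rr}$, Leibniz differentiation with an improper outer integral requires joint continuity and Gaussian-type decay of $\phi_1(s,x,y)$ in all arguments, which is standard from the reflection-principle formula.
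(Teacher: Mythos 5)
Your proposal is correct and follows essentially the same route as the paper: both exploit the independence of $R^r$ and $G^h$ to factorize $\varphi(r,h)$ as $-\beta\int_0^\infty\Ex[e^{-\rho s-R^r_s}]\,d\Ex[G^h_s]$, and then read off each derivative by differentiating under the integral using the explicit joint density $\phi_1$ of $(\tilde W_s,\sup_{q\le s}\tilde W_q)$ and the first-passage density $\phi_2$. A couple of remarks on where the presentations diverge. First, the paper proves the Fubini/factorization step \eqref{eq:m-ex} explicitly via a Riemann-sum approximation and dominated convergence rather than taking it for granted. Second, your derivation of $\varphi_{rr}$ via the factorization $\varphi_r(r,h)=\beta e^{-r}A(r)$ and Leibniz' rule on the upper limit of $A$ is a cleaner packaging of what the paper does by splitting the difference quotient into three pieces $\Delta_n^{(1)},\Delta_n^{(2)},\Delta_n^{(3)}$ (boundary contribution, integrand contribution, and cross-error); the computations are the same and your version is more economical, though to be rigorous the dominated-convergence justification for moving the derivative through the improper $s$-integral should still be spelled out, as the paper does. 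Third, for $\varphi_{hh}$ the paper works with $d_s\Ex[G^h_s]=p(0,h;s,0)\,ds$ where $p$ is the reflected-diffusion transition density (citing Veestraeten) and differentiates $p$ twice, while you differentiate your representation $\varphi_h(r,h)=\beta\int_0^\infty\Ex[e^{-\rho s-R^r_s}]\phi_2(s,h)\,ds$ once more; both arguments give the interior formula for $h>0$, and both (including the paper's) need a small additional continuity/PDE argument to pass to the boundary $h=0$, since in either representation the $h$-differentiated integrand has a non-integrable $s^{-3/2}$ singularity at $h=0$ near $s=0$, so the displayed formula there should be read as the limit from $h>0$.
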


\begin{proof}
The independence between the reflected process $R^r=(R_t^r)_{t\geq0}$ and the process $G^h=(G_t^h)_{t\geq0}$ plays an important role in the proof below. Before calculating the partial derivatives of $\varphi(r,h)$, we first claim that, for all $(r,h)\in \R_+^2$,
\begin{align}\label{eq:m-ex}
\varphi(r,h)=-\beta\int_0^{\infty}\Ex\left[e^{-\rho s-R^{r}_s}\right]d\Ex[G_s^{h}].
\end{align}
In fact, fix $(T,r,h)\in\R_+^3$, and let $n\geq 1$, $s_i=\frac{T}{n}i$ with $i=0,1,\ldots,n$. Then, it holds that
\begin{align*}
\Ex\left[\int_0^Te^{-\rho s-R^{r}_s}dG_s^{h}\right]=\Ex\left[\lim_{n\to \infty} \sum_{i=1}^n e^{-\rho s_{i}-R^{r}_{s_i}}\left(G_{s_i}^{h}-G_{s_{i-1}}^{h}\right)\right].
\end{align*}
Note that $e^{-\rho s-R_s^{r}}\leq 1$ a.s., for all $s\in[0,T]$. Then $\sum_{i=1}^n e^{-\rho s_{i}-R^{r}_{s_i}}(G_{s_i}^{h}-G_{s_{i-1}}^{h})\leq \sum_{i=1}^n(G_{s_i}^{h}-G_{s_{i-1}}^{h})=G_T^h$. Thus, from the dominated convergence theorem (DCT) and the independence between $B^0$ and $B^1$, it follows that
\begin{align}\label{eq:m-ex-T}
&\Ex\left[\int_0^T\!\!\!e^{-\rho s-R^{r}_s}dG_s^{h}\right]\!\!=\!\!\lim_{n\to \infty}\Ex\left[\sum_{i=1}^n e^{-\rho s_{i}-R^{r}_{s_i}}\left(G_{s_i}^{h}-G_{s_{i-1}}^{h}\right)\right]\!\!=\!\!\lim_{n\to \infty}\sum_{i=1}^n \Ex\left[e^{-\rho s_{i}-R^{r}_{s_i}}\left(G_{s_i}^{h}-G_{s_{i-1}}^{h}\right)\right]\nonumber\\
&\qquad\qquad=\lim_{n\to \infty}\sum_{i=1}^n \Ex\left[e^{-\rho s_{i}-R^{r}_{s_i}}\right]\left\{\Ex[G_{s_i}^{h}]-\Ex[G_{s_{i-1}}^{h}]\right\}=\int_0^T\Ex\left[e^{-\rho s-R^{r}_s}\right]d\Ex[G_s^{h}].
\end{align}
Letting $T\to \infty$ on both side of \eqref{eq:m-ex-T} and applying MCT, it follows that
\begin{align*}
\Ex\left[\int_0^{\infty} e^{-\rho s-R^{r}_s}dG_s^{h}\right]=\int_0^{\infty}\Ex\left[e^{-\rho s-R^{r}_s}\right]d\Ex[G_s^{h}].
\end{align*}
This verifies the claim \eqref{eq:m-ex}.

Let $h\in\R_+$ be fixed. First of all, we consider the case with arbitrary $r_2>r_1 \geq 0$. It follows from \eqref{eq:varphiindependent} that
\begin{align*}
\frac{\varphi(r_2,h)-\varphi(r_1,h)}{r_2-r_1}=-\beta\int_0^{\infty} \mathbb{E}\left[e^{-\rho s} \frac{e^{-R_s^{r_2}}-e^{-R_s^{ r_1}}}{r_2-r_1}d G_s^{h}\right] .
\end{align*}
A direct calculation yields that, for all $s \geq 0$,
\begin{align*}
&\lim _{r_2 \downarrow r_1} \frac{e^{-R_s^{r_2}-\rho s}-e^{-R_s^{ r_1}-\rho s}}{r_2-r_1}=\left\{\begin{array}{cc}
\displaystyle -e^{-R_s^{ r_1}-\rho s}, & \max_{q\in[0,s]}\{-\alpha B^1_q-(\frac{1}{2}\alpha^2-\rho)q\} \leq r_1, \\[0.6em]
\displaystyle 0, & \max_{q\in[0, s]}\{-\alpha B^1_q-(\frac{1}{2}\alpha^2-\rho)q\}>r_1 .
\end{array}\right.
\end{align*}
Note that $\sup_{(r_1,r_2)\in \R_+^2}|\frac{e^{-R_s^{r_2}-\rho s}-e^{-R_s^{r_1}-\rho s}}{r_2-r_1}|\leq e^{-\rho s}$. Then, the DCT yields that
\begin{align}\label{eq:m-r-1}
\lim _{r_2 \downarrow r_1}\frac{\varphi(r_2,h)-\varphi(r_1,h)}{r_2-r_1} &=\beta\mathbb{E}\left[ \int_0^{\infty} e^{-\rho s-R_s^{ r_1}}{\bf1}_{\{\max _{q\in[0, s]}\{-\alpha B^1_q-(\frac{1}{2}\alpha^2-\rho)q\} \leq r_1\}} dG_s^{h}\right] \nonumber\\
&=\beta\mathbb{E}\left[\int_0^{\tau_{r_1} } e^{-\rho s-R_s^{ r_1}} d G_s^{h}\right].
\end{align}
For the case $r_1>r_2 \geq 0$, similar to the computations for \eqref{eq:m-r-1}, we can show that
\begin{align*}
\lim _{r_2 \uparrow r_1} \frac{\varphi(r_2,h)-\varphi(r_1,h)}{r_2-r_1}=\lim _{r_2 \downarrow r_1} \frac{\varphi(r_2,h)-\varphi(r_1,h)}{r_2-r_1}.
\end{align*}
Thus, the representation \eqref{eq:m-r} holds.

For any real numbers $r_0, r_n \geq 0$ with $r_n \rightarrow r_0$ as $n \to\infty$, we have from \eqref{eq:m-r} that, for all $n \geq 1$,
\begin{align}\label{eq:Deltan}
\Delta_n&:=\frac{\varphi_r(r_n,h)-\varphi_r(r_0,h)}{r_n-r_0}\nonumber\\
&= \beta \Ex\left[\frac{1}{r_n-r_0}\int_{\tau_{r_0}}^{\tau_{r_n}} e^{-\rho s-R_s^{r_0}}dG_s^{h}\right]+ \beta \Ex\left[\frac{1}{r_n-r_0}\int_{0}^{\tau_{r_0}} e^{-\rho s}\left(e^{-R_s^{r_n}}-e^{-R_s^{r_0}}\right)dG_s^{h}\right]\nonumber\\
&\quad+\beta \Ex\left[\frac{1}{r_n-r_0}\int_{\tau_{r_0}}^{\tau_{r_n}} e^{-\rho s}\left(e^{-R_s^{r_n}}-e^{-R_s^{r_0}}\right)dG_s^{h}\right]:=\Delta_n^{(1)}+\Delta_n^{(2)}+\Delta_n^{(3)}.
\end{align}
In order to handle the term $\Delta_n^{(1)}$, we first focus on the case with $r_n \downarrow r_0$ as $n \to \infty$. Let us define the drifted-Brownian motion $\tilde{W}_s:=-\alpha B^1_s-(\frac{\alpha^2}{2}-\rho)s$ for all $s\in\R_+$. In view of \eqref{eq:R} and \eqref{eq:L}, it holds that
\begin{align*}
R_s^{r_0}=r_0-\tilde{W}_s+\left(\sup_{q\in[0,s]}\tilde{W}_q-r_0\right)^+,\quad \forall s\geq 0.
\end{align*}
Note that $\phi_1(s,x,y)$ defined by \eqref{eq:d} is the joint probability density of two-dimensional random variable $(\tilde{W}_s,\max_{q\in[0,s]}\tilde{W}_q)$ for any $s\geq 0$.  Then, we have
\begin{align}\label{eq:delat-n1-1}
&\Ex\left[\frac{1}{r_n-r_0}\int_{\tau_{r_0}}^{\tau_{r_n}} e^{-\rho s-R_s^{r_0}}dG_s^{h}\right]=\Ex\left[\frac{1}{r_n-r_0}\int_{0}^{\infty} e^{-\rho s-R_s^{r_0}}\mathbf{1}_{ \{\tau_{r_0}<s\leq \tau_{r_n}\}}dG_s^{h}\right]\nonumber\\
&\quad=\int_{0}^{\infty}\Ex\left[\frac{\exp(-\rho s-r_0+\tilde{W}_s-(\sup_{q\in[0,s]}\tilde{W}_q-r_0)^+)}{r_n-r_0} \mathbf{1}_{\{r_0<\max _{q \in[0, s]}\tilde{W}_q \leq r_n\}}\right]d\Ex[G_s^{h}]\nonumber\\
&\quad=\int_{0}^{\infty}\int_{r_0}^{r_n}\int_{-\infty}^y \frac{e^{-\rho s-r_0+x-(y-r_0)^+}}{r_n-r_0}\phi_1(s,x,y)dxdy d\Ex[G_s^{h}].
\end{align}
For $(s,y)\in\R_+^2$, set $g(s,y):=\int_{-\infty}^y e^{-\rho s-r_0+x-(y-r_0)^+}\phi_1(s,x,y)dx$.  Then, by the continuity of $y\to g(s,y)$,  we have
\begin{align}\label{eq:delat-n1-2}
\lim_{n\to \infty} \frac{1}{r_n-r_0}\int_{r_0}^{r_n} g(s,y)dy=g(s,r_0).
\end{align}
It follows from \eqref{eq:delat-n1-1}, \eqref{eq:delat-n1-2} and DCT that
\begin{align*}
\lim_{n\to \infty} \Ex\left[\frac{1}{r_n-r_0}\int_{\tau_{r_0}}^{\tau_{r_n}} e^{-\rho s-R_s^{r_0}}dG_s^{h}\right]= \int_{0}^{\infty}\int_{-\infty}^{r_0} e^{-\rho s-r_0+x}\phi_1(s,x,r_0)dx d\Ex[G_s^{h}].
\end{align*}
For the case where $r_0>0$ and $r_n\geq0$ with $r_n\uparrow r_0$ as $n\to \infty$, using a similar argument as above, we can derive that
\begin{align*}
\lim_{n\to \infty}\Delta_n^{(1)}= \beta \int_{0}^{\infty}\int_{-\infty}^{r_0} e^{-\rho s-r_0+x}\phi_1(s,x,r_0)dx d\Ex[G_s^{h}].
\end{align*}
In a similar fashion as in the derivation of \eqref{eq:m-r-1}, we also have
\begin{align*}
\lim_{n\to \infty } \Delta_n^{(2)}&=\beta \lim_{n\to \infty }\Ex\left[\frac{1}{r_n-r_0}\int_{0}^{\tau_{r_0}} e^{-\rho s}\left(e^{-R_s^{r_n}}-e^{-R_s^{r_0}}\right)dG_s^{h}\right]=-\beta \Ex\left[\int_{0}^{\tau_{r_0}} e^{-\rho s-R_s^{r_0}}dG_s^{h}\right].
\end{align*}
At last,  we can also obtain
\begin{align*}
\left|\Delta_n^{(3)}\right|&=\beta\Ex\left[\frac{1}{r_n-r_0}\left|\int_{\tau_{r_0}}^{\tau_{r_n}} e^{-\rho s}\left(e^{-R_s^{r_n}}-e^{-R_s^{r_0}}\right)dG_s^{h}\right|\right]\\
&\leq \beta \Ex\left[\frac{G_{\tau_{r_n}}-G_{\tau_{r_0}}}{r_n-r_0}\sup_{s\geq 0}\left| e^{-\rho s}\left(e^{-R_s^{r_n}}-e^{-R_s^{r_0}}\right)\right|\right].
\end{align*}
Note that $\sup_{s\geq 0}|\frac{e^{-R_s^{r_n}}-e^{-R_s^{r_0}}}{r_n-r_0}|\leq 1$, $\Px$-a.s., and the fact that $G_{\tau_{r_n}} \to G_{\tau_{r_0}}$, a.s., as $n\to \infty$, the DCT yields that $\lim_{n\to \infty}|\Delta_n^{(3)}|=0$. Putting all the pieces together, we get that
\begin{align}\label{eq:m-rr1}
\varphi_{rr}(r_0,h)&= \beta \int_{0}^{\infty}\int_{-\infty}^{r_0} e^{-\rho s-r_0+x}\phi_1(s,x,r_0)dx d\Ex[G_s^{h}]- \beta \Ex\left[\int_{0}^{\tau_{r_0}} e^{-\rho s-R_s^{r_0}}dG_s^{h}\right].
\end{align}

We next derive the representation of the partial derivative $\varphi_h(r,h)$. For any $h_2>h_1 \geq 0$, it follows from \eqref{eq:varphiindependent} that
\begin{align*}
\frac{\varphi\left( r,h_2\right)-\varphi\left(r,h_1\right)}{h_2-h_1}=- \beta\int_0^{\infty} \mathbb{E}\left[e^{-\rho s-R_s^{r}}\right] d\left(\frac{G_s^{h_2}-G_s^{h_1}}{h_2-h_1}\right).
\end{align*}
In lieu of \eqref{eq:L}, it holds that, for $i=1,2$, $G_s^{ h_i}=h_i\vee\{\max _{l\in[0, s]}(\mu_Bl+\sigma_BB^0_{l})\}-h_i$ for $s\geq0$. For $h>0$, we introduce  $\eta_{h} :=\inf \left\{s\geq 0;~  \sigma_B W_s+\mu_B s=h\right\}$ with $\inf \emptyset=+\infty$ by convention. A direct calculation yields that, for all $s \geq 0$,
\begin{align*}
&\frac{G_s^{h_2}-G_s^{h_1}}{h_2-h_1}=
\begin{cases}
\displaystyle ~~~~0, & ~~~s< \eta_{h_1}, \\[0.4em]
\displaystyle-\frac{G_s^{h_1}}{h_2-h_1},  & \eta_{h_1}\leq s<\eta_{h_2},\\[0.6em]
\displaystyle~~~-1, & ~~~s\geq \eta_{h_2}.
\end{cases}
\end{align*}
Then,  it holds that
\begin{align*}
&\mathbb{E}\left[\int_0^{\infty} e^{-\rho s-R_s^{r}}d\left(\frac{G_s^{h_2}-G_s^{h_1}}{h_2-h_1}\right)\right]=-\mathbb{E}\left[\int_{\eta_{h_1}}^{\eta_{h_2}}e^{-\rho s-R_s^{r}} d\left(\frac{G_s^{h_1}}{h_2-h_1}\right)\right]   \nonumber\\
&\qquad=-\mathbb{E}\left[\int_{\eta_{h_1}}^{\eta_{h_2}}\left(e^{-\rho s-R_s^{r}}-e^{-\rho \eta_{h_1}-R_{\eta_{h_1}}^{r}}\right) d\left(\frac{G_s^{h_1}}{h_2-h_1}\right)\right]-\Ex\left[e^{-\rho\eta_{h_1}-R_{\eta_{h_1}}^{r}}\right].
\end{align*}
Note that, as $h_2 \downarrow h_1$, we have
\begin{align*}
&\mathbb{E}\left[\int_{\eta_{h_1}}^{\eta_{h_2}}\!\!\left(e^{-\rho s-R_s^{r}}-e^{-\rho \eta_{h_1}-R_{\eta_{h_1}}^{r}}\right)d\left(\frac{G_s^{h_1}}{h_2-h_1}\right)\right]\!\!\leq\! \Ex\left[\sup_{s\in [\eta_{h_1},\eta_{h_2}]}\left|e^{-\rho s-R_s^{r}}-e^{-\rho \eta_{h_1}-R_{\eta_{h_1}}^{r}}\right|\right]\to 0.
\end{align*}
This yields that
\begin{align}\label{eq:m-h-1}
\lim _{h_2 \downarrow h_1}\Ex\left[\int_0^{\infty} e^{-\rho s-R_s^{r}} d\left(\frac{G_s^{h_2}-G_s^{h_1}}{h_2-h_1}\right)\right]=-\Ex\left[e^{-\rho \eta_{h_1}-R_{\eta_{h_1}}^{r}}\right].
\end{align}
Similarly, using the above argument, we also obtain
\begin{align*}
\lim _{h_2 \uparrow h_1}\frac{\varphi( r,h_2)-\varphi(r,h_1)}{h_2-h_1}=\lim _{h_2 \downarrow h_1} \frac{\varphi(r,h_2)-\varphi(r,h_1)}{h_2-h_1}.
\end{align*}
Thus, we can conclude that, for all $(r,h)\in\R_+^2$,
\begin{align}\label{eq:m-h}
\varphi_h(r,h)=\beta \Ex\left[e^{-\rho \eta_{h}-R_{\eta_{h}}^{r}}\right].
\end{align}
Furthermore, in view of Proposition 2.5 in \cite{Abraham2000}, it follows that
\begin{align}\label{eq:Ex-K}
\Ex[G_s^{h}]=\int_0^s p(0,h;l,0) dl,\quad \forall s\geq 0,
\end{align}
where $p(0,h_0;s,h)=\Px(P_s^{h_0}\in dh)/dh$ is the condition density function of the reflected drifted Brownian motion $P_s^{h_0}$ at time $s\geq 0$ (c.f. \cite{Vee2004}). Hence, we deduce that
\begin{align*}
\varphi(r,h)=-\beta\int_0^{\infty}\Ex\left[e^{-\rho s-R^{r}_s}\right]p(0,h;s,0)ds,\quad \forall (r,h)\in\R_+^2.
\end{align*}
In view that for every fixed $s\in\R_+$, the function $h\to p(0,h;s,0)$ belongs to $C^2(\R_+)$ (c.f. \cite{Vee2004}), we get that, for all $(r,h)\in \R_+^2$,
\begin{align}\label{eq:m-hh}
\varphi_{hh}(r,h)=-\beta\int_0^{\infty}\Ex\left[e^{-\rho s-R^{r}_s}\right]\frac{\partial^2 p(0,h;s,0)}{\partial h^2}ds.
\end{align}
Following \eqref{eq:m-h} and a similar argument as in the proof of \eqref{eq:m-r-1}, we can obtain
\begin{align}\label{eq:m-rh}
\varphi_{rh}(r,h)&=-\beta \Ex\left[e^{-\rho \eta_{h}-R_{\eta_{h}}^{r}}\mathbf{1}_{\eta_h<\tau_r}\right]\nonumber\\
&=-\beta \Ex\left[\exp\left(-\rho \eta_h-r+\tilde{W}_{\eta_h}-\left(\sup_{q\in[0,\eta_h]}\tilde{W}_q-r\right)^+\right)\mathbf{1}_{\sup_{q\in[0,\eta_h]}\tilde{W}_q<r}\right]\nonumber\\
&=-\beta\int_0^{\infty}\int_0^r\int_{-\infty}^y e^{-\rho s-r+x}\phi_1(s,x,y)\phi_2(s,h)dxdyds.
\end{align}
The desired conclusion on the smoothness of $\varphi(r,h)$ in the lemma follows by combining expressions in \eqref{eq:m-r-1}, \eqref{eq:m-rr1}, \eqref{eq:m-h}, \eqref{eq:m-hh} and \eqref{eq:m-rh}.
\end{proof}

We next give the proof of Lemma~\ref{lem:secondtermm}.

\begin{proof}[Proof of Lemma~\ref{lem:secondtermm}]
In lieu of Lemma~\ref{lem:derivative-m}, we first prove that the function $\varphi(r,h)$ defined by \eqref{eq:fcnm} satisfies the Neumann problem \eqref{eq:HJB-m}. In fact, for all $(r,h)\in \R_+^2$, we introduce that
\begin{align*}
\begin{cases}
\displaystyle \hat{B}_t^{r}:=r+\left(\frac{\alpha^2}{2}-\rho\right)t+\alpha B^1_t,\quad \forall t\in\R_+,\\[0.8em]
\displaystyle \tilde{B}_t^{h}:=h-\mu_Bt-\sigma_B B_t^0,\quad \forall t\in\R_+.
\end{cases}
\end{align*}
Here, we recall that $B^1=(B^1_t)_{t\geq 0}$ and $B^0=(B^0_t)_{t\geq 0}$ are two independent scalar Brownian motions, which are given in \eqref{eq:R} and \eqref{eq:H}. For any $\epsilon\in(0,r\wedge h)$, let us define
\begin{align}
\tau_{\epsilon}:=\inf\{t\geq 0;~|\hat{B}_t^{r}-r|\geq \epsilon~\text{or}~|\tilde{B}_t^{h}-h|\geq \epsilon\}.
\end{align}
We can easily check that, for any $\hat{t}\geq 0$, $\hat{R}^{r}_{\hat{t}\land \tau_{\epsilon}}=\hat{B}^{r}_{\hat{t}\land \tau_{\epsilon}}$ and  $P^{h}_{\hat{t}\land \tau_{\epsilon}}=\tilde{B}^{h}_{\hat{t}\land \tau_{\epsilon}}$. Then, by \eqref{eq:H}, \eqref{eq:L} and the strong Markov property, we get that
\begin{align*}
-\beta \Ex\left[ \int_{\hat{t}\wedge\tau_{\epsilon}}^{\infty} e^{-\rho s-R^{r}_s}dG_s^{h}\Big|{\cal F}_{\hat{t}\wedge\tau_{\epsilon}}\right]
=e^{-\rho (\hat{t}\wedge\tau_{\epsilon})}\varphi\left(R^{r}_{\hat{t}\wedge\tau_{\epsilon}},P^{h}_{\hat{t}\wedge\tau_{\epsilon}}\right),
\end{align*}
where $\varphi(r,h)=-\beta\Ex[ \int_0^{\infty} e^{-\rho s-R^{r}_s}dG_s^{h}]$ using \eqref{eq:fcnm}. Consider using Lemma \ref{lem:derivative-m}, we can verify that the function $\varphi$ satisfies  $\varphi_r(0,h)=0$, and $\varphi_h(r,0)=\beta e^{-r}$ for all $(r,h)\in \R_+^2$. Therefore, for $(r,h)\in \R_+^2$, it holds that
\begin{align*}
\varphi(r,h)=\Ex\left[e^{-\rho (\hat{t}\wedge\tau_{\epsilon})}\varphi\left(R^{r}_{\hat{t}\wedge\tau_{\epsilon}},P^{h}_{\hat{t}\wedge\tau_{\epsilon}}\right)
-\beta\int_0^{\hat{t}\wedge\tau_{\epsilon}}e^{-\rho s-R^{r}_s}dG_s^{h}\right].
\end{align*}
By Lemma~\ref{lem:derivative-m} and It{\^o}'s formula, we have
\begin{align}\label{eq:hat-t}
&\frac{\beta}{\hat{t}}\Ex\left[\int_0^{\hat{t}\wedge\tau_{\epsilon}}e^{-\rho s-R^{r}_s}dG_s^{h}\right]
=\frac{1}{\hat{t}}\Ex\left[e^{-\rho (\hat{t}\wedge\tau_{\epsilon})}\varphi\left(R^{r}_{\hat{t}\wedge\tau_{\epsilon}},P^{h}_{\hat{t}\wedge\tau_{\epsilon}}\right)-\varphi(r,h)\right]\nonumber\\
&\quad=\frac{1}{\hat{t}}\Ex\left[\int_0^{\hat{t}\wedge\tau_t^{\epsilon}} e^{-\rho s}\left(\mathcal{L}\varphi-\rho \varphi\right)(R_{s}^{ r},P_{s}^{h})d s\right]+\frac{1}{\hat{t}}\Ex\left[\int_0^{\hat{t}\wedge\tau_{\epsilon}} e^{-\rho s}\varphi_r(R_{s}^{ r},P_{s}^{h})  d L_s^{ r}\right] \nonumber\\
&\qquad+\frac{1}{\hat{t}}\Ex\left[\int_0^{\hat{t}\wedge\tau_{\epsilon}} e^{-\rho s}\varphi_h(R_{s}^{r},P_{s}^{h})  d G_s^{ h}\right],
\end{align}
where the operator ${\cal L}$ is defined on $C^{2}(\R_+^2)$ that
\begin{align*}
{\cal L}g:=\frac{\alpha^2}{2}g_{rr}+\left(\frac{\alpha^2}{2}-\rho\right)g_r+\frac{1}{2}\sigma_B^2 g_{hh}-\mu_Bg_h,\quad \forall g\in C^{2}(\R_+^2).
\end{align*}
The DCT yields that
\begin{align*}
\lim_{\hat{t}\downarrow 0}\frac{1}{\hat{t}}\Ex\left[\int_0^{\hat{t}\wedge\tau_{\epsilon}}e^{-\rho s}\left(\mathcal{L}\varphi-\rho\varphi\right)( R_{s}^{r},P_{s}^{h}) d s\right]=\left(\mathcal{L}\varphi-\rho\varphi\right)(r,h).
\end{align*}
Note that, for all $r\in \R_+$, $\varphi_h(r,0)=\beta e^{-r}$ and $R_s^{r}>0$ on $s\in[0,\hat{t}\wedge\tau_{\epsilon}]$.  We then have
\begin{align*}
\frac{1}{\hat{t}}\Ex\left[\int_{0}^{\hat{t}\wedge\tau_{\epsilon}}\varphi_r(R_{s}^{r},P_{s}^{h})d L_s^{r}\right]=0,\quad  \varphi_h(R_{s}^{r},P_{s}^{h})\mathbf{1}_{\{P_s^{ h}=0\}}=\beta e^{-R_{s}^{r}}.
\end{align*}
By using \eqref{eq:hat-t}, we obtain $\left(\mathcal{L}\varphi-\rho\varphi\right)(r,h)=0$ on $(r,h)\in \R_+^2$.

Next, we assume that the PDE \eqref{eq:HJB-m} admits a classical solution $\varphi$ satisfying $|\varphi(r,h)|\leq C$ for some constant $C>0$ depending on $(\mu,\sigma,\mu_B,\sigma_B)$ only. Using the Neumann problem \eqref{eq:HJB-m}, the It{\^o}'s formula gives that, for all $(T,r,h) \in \R_+^3$,
\begin{align}\label{eq:HJB-m-cla-sol}
&\Ex\left[e^{-\rho T}\varphi(R_{T}^{ r},P_{T}^{h})\right]=\varphi(r,h)+\Ex\left[\int_0^{T}e^{-\rho s}\left(\mathcal{L}\varphi-\rho \varphi\right)(R_{s}^{ r},P_{s}^{h}) d s\right]\nonumber \\
&\qquad+\Ex\left[\int_0^{T} e^{-\rho s}\varphi_r(R_{s}^{r},P_{s}^{h}) \mathbf{1}_{\{R_s^{ r}=0\}} d L_s^{r}\right]+\Ex\left[\int_0^{T} e^{-\rho s}\varphi_h(R_{s}^{r},P_{s}^{h}) \mathbf{1}_{\{P_s^{ h}=0\}} d G_s^{h}\right]\nonumber\\
&\quad= \varphi(r, h)+\beta \Ex\left[\int_0^{T} e^{-R_s^{ r}-\rho s} d G_s^{h}\right].
\end{align}
Moreover, by DCT and the boundedness of $\varphi$ on $\R_+^2$, $\lim_{T\to \infty}\Ex\left[e^{-\rho T}\varphi(R_{T}^{ r},P_{T}^{h})\right]=0$. Letting $T \rightarrow \infty$ on both sides of \eqref{eq:HJB-m-cla-sol} and using DCT and MCT, we obtain the representation \eqref{eq:fcnm} of the solution $\varphi(r,h)$, which completes the proof.
\end{proof}

Next, we propose a homogenization method of Neumann boundary conditions to study the smoothness of the last term in the probabilistic representation \eqref{eq:v} together with the application of the result obtained in Lemma~\ref{lem:secondtermm}.
\begin{proposition}\label{prop:secondtermm}
For any $(r,h)\in \R_+^2$, define the function $\xi(r,h)$ as follows:
\begin{align}\label{eq:fcnmxi}
\xi(r,h):=-\kappa_1 \Ex\left[\int_0^{\infty}e^{-\rho s}\varphi_{rh}(R_s^r,H_s^h)ds\right].
\end{align}
Here, we recall that the reflected processes $R^{r}=(R_t^{r})_{t\geq 0}$ and $H^{h}=(H_t^{h})_{t\geq 0}$ with $(r,h)\in\R_+^2$ are given in \eqref{eq:R} and \eqref{eq:H}, respectively. Moreover, let us define that
\begin{align}\label{eq:psi}
\psi(r,h):=\varphi(r,h)+ \xi(r,h),\quad \forall (r,h)\in\R_+,
\end{align}
where the function $\varphi(r,h)\in C^2(\R_+^2)$ is given by \eqref{eq:varphiindependent} in Lemma~\ref{lem:secondtermm}. Then, the function $\psi(r,h)$ is a classical solution to the following Neumann problem with  Neumann boundary conditions at $r=0$ and $h=0$:
\begin{align}\label{eq:HJB-psi}
\begin{cases}
\displaystyle \frac{\alpha^2}{2}\psi_{rr}+\left(\frac{\alpha^2}{2}-\rho\right)\psi_r+\frac{\sigma_B^2}{2} \psi_{hh}-\mu_B \psi_h-\kappa_1 \psi_{rh}=\rho \psi,~ \text{on}~(0,\infty)^2,\\[0.8em]
\displaystyle \psi_r(0,h)=0,\quad \forall h\in\R_+,\\[0.8em]
\displaystyle \psi_h(r,0)=\beta e^{-r},\quad \forall r\in\R_+.
\end{cases}
\end{align}
On the other hand, if the Neumann problem \eqref{eq:HJB-psi} has a classical solution $\psi(r,h)$ satisfying $|\psi(r,h)|\leq C$ for some constant $C>0$ depending on $(\mu,\sigma,\mu_B,\sigma_B,\gamma)$, then this solution $\psi(r,h)$ satisfies the following probabilistic representation:
\begin{align}\label{eq:fcnm}
\psi(r,h)=-\beta\Ex\left[\int_0^{\infty}e^{-\rho s-R^{r}_s}dK_s^{h}\right].
\end{align}
\end{proposition}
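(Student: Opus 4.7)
The plan is to split the Neumann problem \eqref{eq:HJB-psi} into two pieces by writing $\psi=\varphi+\xi$, where $\varphi$ from Lemma \ref{lem:secondtermm} already carries the inhomogeneous boundary datum $\varphi_h(r,0)=\beta e^{-r}$ in the decoupled setting without the cross term, and the correction $\xi$ absorbs the cross derivative $-\kappa_1\psi_{rh}$ while satisfying \emph{homogeneous} Neumann conditions at both $r=0$ and $h=0$. First I would identify the infinitesimal generator of the correlated reflected pair $(R^r,H^h)$ from \eqref{eq:R}--\eqref{eq:H}: since $d\langle R,H\rangle_t=-\alpha\varrho_1\sigma_B\,dt=-\kappa_1\,dt$ by the definitions of $\varrho_1$ and $\kappa_1$ in \eqref{eq:coeffalpha}, It\^o's formula produces the operator
\begin{align*}
\mathcal{A}_{\mathrm{new}}g:=\frac{\alpha^2}{2}g_{rr}+\Bigl(\frac{\alpha^2}{2}-\rho\Bigr)g_r+\frac{\sigma_B^2}{2}g_{hh}-\mu_B g_h-\kappa_1 g_{rh},
\end{align*}
which is precisely the spatial operator on the left-hand side of the PDE in \eqref{eq:HJB-psi}.

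For the forward direction, the key step is to show that $\xi\in C^{2}(\R_+^2)$ and
\begin{align*}
(\mathcal{A}_{\mathrm{new}}-\rho)\xi=\kappa_1\varphi_{rh},\qquad \xi_r(0,h)=0,\qquad \xi_h(r,0)=0.
\end{align*}
Smoothness would follow from the explicit representations \eqref{eq:m-r}--\eqref{eq:m-rh} in Lemma \ref{lem:derivative-m}, which make $\varphi_{rh}$ smooth and bounded, combined with stochastic-flow differentiation of the expectation in $(r,h)$, in the spirit of the proof of Lemma \ref{lem:derivative-m}. The PDE and the homogeneous Neumann conditions are then a standard Feynman--Kac consequence: applying It\^o to $e^{-\rho t}\xi(R_t^r,H_t^h)$ and using that $dL^r$ (resp.\ $dK^h$) charges only $\{R=0\}$ (resp.\ $\{H=0\}$) forces $\xi_r(0,\cdot)=0$ and $\xi_h(\cdot,0)=0$. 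Adding the PDE from Lemma \ref{lem:secondtermm} satisfied by $\varphi$ yields
\begin{align*}
(\mathcal{A}_{\mathrm{new}}-\rho)\psi=(\cL-\rho)\varphi-\kappa_1\varphi_{rh}+\kappa_1\varphi_{rh}=0,
\end{align*}
together with the additive boundary values $\psi_r(0,h)=\varphi_r(0,h)+\xi_r(0,h)=0$ and $\psi_h(r,0)=\beta e^{-r}+0=\beta e^{-r}$.

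For the converse, suppose \eqref{eq:HJB-psi} admits a classical solution $\psi$ with $|\psi|\le C$. Applying It\^o to $e^{-\rho t}\psi(R_t^r,H_t^h)$ on $[0,T]$, the PDE kills the $dt$ integrand, the condition $\psi_r(0,h)=0$ eliminates the $dL^r$ integral, and the relation $\psi_h(r,0)=\beta e^{-r}$ converts the $dK^h$ integral into $\beta\int_0^T e^{-\rho s-R_s^r}\,dK_s^h$. Taking expectations gives
\begin{align*}
\Ex\!\left[e^{-\rho T}\psi(R_T^r,H_T^h)\right]=\psi(r,h)+\beta\,\Ex\!\left[\int_0^T e^{-\rho s-R_s^r}\,dK_s^h\right].
\end{align*}
Letting $T\to\infty$, boundedness of $\psi$ together with the dominated convergence theorem send the left-hand side to zero, while monotone convergence promotes the right-hand integral to one on $[0,\infty)$, delivering the representation \eqref{eq:fcnm}.

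The main obstacle is the $C^{2}$-regularity of $\xi$ and the rigorous verification of its homogeneous Neumann conditions, because in contrast to Lemma \ref{lem:secondtermm}, the reflecting processes $R^r$ and $H^h$ are genuinely correlated through the common Brownian motion $B^1$, so one cannot factorize the expectation defining $\xi$ as a product of an independent reflected-diffusion expectation and a local-time expectation. I would handle this by the differentiation strategy of Lemma \ref{lem:derivative-m}: exploit the explicit joint density encoded in $\phi_1$ for the reflected component together with a Harrison-style Skorokhod representation for the pair, and differentiate under the expectation with dominated convergence justified by the pointwise bound and exponential decay of $\varphi_{rh}$ provided by \eqref{eq:m-rh}.
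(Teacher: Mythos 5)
Your proposal matches the paper's proof in structure and substance: both use the decomposition-homogenization idea $\psi = \varphi + \xi$, with $\varphi$ from Lemma~\ref{lem:secondtermm} carrying the inhomogeneous boundary datum $\varphi_h(r,0)=\beta e^{-r}$, and $\xi$ absorbing the cross term $-\kappa_1\psi_{rh}$ under homogeneous Neumann conditions (this is exactly the paper's Eq.~\eqref{eq:HJB-xi}); both close the converse direction by applying It\^o's formula to $e^{-\rho T}\psi(R_T^r,H_T^h)$ and letting $T\to\infty$ under the boundedness hypothesis. One small remark on your final paragraph: the enabler for the $C^2$-regularity of $\xi$ is not really the explicit density $\phi_1$ or a factorization device as in Lemma~\ref{lem:derivative-m}; rather, it is that $\xi$ is built from a \emph{Lebesgue} integral $\int_0^\infty e^{-\rho s}\varphi_{rh}(R_s^r,H_s^h)\,ds$ of a bounded smooth integrand (this is what the paper emphasizes and why it invokes the stochastic-flow argument from Theorem~4.2 of \cite{BoLiaoYu21} rather than the local-time machinery of Lemma~\ref{lem:derivative-m}, which requires independence). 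You allude to this with "differentiate under the expectation with dominated convergence," so the idea is essentially there, but the route via Lemma~\ref{lem:derivative-m} would not directly apply since that lemma relies on the independence of $R^r$ and $G^h$.
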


\begin{proof}
Note that the integral in the expectation is the Lebesgue integral in \eqref{eq:fcnmxi}. Then, using the stochastic flow argument  (c.f. the argument used in Theorem~4.2 in \cite{BoLiaoYu21}), it is not difficult to verify that $\xi(r,h)\in C^{2}(\R_+^2)$ is a classical solution to the following Neumann problem with homogeneous Neumann boundary conditions:
\begin{align}\label{eq:HJB-xi}
\begin{cases}
\displaystyle \frac{\alpha^2}{2}\xi_{rr}+\left(\frac{\alpha^2}{2}-\rho\right)\xi_r+\frac{\sigma_B^2}{2} \xi_{hh}-\mu_B \xi_h-\kappa_1 \xi_{rh}-\rho \xi=\kappa_1 \varphi_{rh},~ \text{on}~(0,\infty)^2,\\[0.9em]
\displaystyle \xi_r(0,h)=0,\quad \forall h\in\R_+,\\[0.8em]
\displaystyle \xi_h(r,0)=0,\quad \forall r\in\R_+.
\end{cases}
\end{align}
Using Eq.~\eqref{eq:HJB-m} in Lemma~\ref{lem:secondtermm}, i.e.,
\begin{align*}
\begin{cases}
\displaystyle \frac{\alpha^2}{2}\varphi_{rr}+\left(\frac{\alpha^2}{2}-\rho\right)\varphi_r+\frac{\sigma_B^2}{2} \varphi_{hh}-\mu_B \varphi_h=\rho \varphi,~ \text{on}~(0,\infty)^2,\\[0.8em]
\displaystyle \varphi_r(0,h)=0,\quad \forall h\in\R_+,\\[0.8em]
\displaystyle \varphi_h(r,0)=\beta e^{-r},\quad \forall r\in\R_+.
\end{cases}
\end{align*}
In terms of the above two Neumann problem, we can conclude that $\psi(r,h)=\varphi(r,h)+ \xi(r,h)$ satisfies the Neumann problem \eqref{eq:HJB-psi}. This shows the first part of the proposition.

We next prove the second part of the proposition. To do it, we assume that the Neumann problem \eqref{eq:HJB-m} admits a classical solution $\psi$ satisfying $|\psi(r,h)|\leq C$ for some constant $C>0$ depending on $(\mu,\sigma,\mu_B,\sigma_B,\gamma)$ only. Then, the It{\^o}'s formula gives that, for all $(T,r,h) \in \R_+^3$,
\begin{align}\label{eq:HJB-m-cla-sol}
&\Ex\left[e^{-\rho T}\psi(R_{T}^{ r},H_{T}^{h})\right]=\psi(r,h)+\Ex\left[\int_0^{T}e^{-\rho s}\left(\mathcal{L}\psi-\rho \psi\right)(R_{s}^{ r},H_{s}^{h}) d s\right]\nonumber \\
&\qquad+\Ex\left[\int_0^{T} e^{-\rho s}\psi_r(R_{s}^{r},H_{s}^{h}) \mathbf{1}_{\{R_s^{ r}=0\}} d L_s^{r}\right]+\Ex\left[\int_0^{T} e^{-\rho s}\psi_h(R_{s}^{r},H_{s}^{h}) \mathbf{1}_{\{H_s^{ h}=0\}} d K_s^{h}\right],
\end{align}
where the operator ${\cal L}$ is defined on $C^{2}(\R_+^2)$ that
\begin{align*}
{\cal L}g:=\frac{\alpha^2}{2}g_{rr}+\left(\frac{\alpha^2}{2}-\rho\right)g_r+\frac{\sigma_B^2}{2} g_{hh}-\mu_Bg_h-\kappa_1 g_{rh},\quad \forall g\in C^{2}(\R_+^2).
\end{align*}
It follows from the first equation in \eqref{eq:HJB-psi} that $\Ex[\int_0^{T}e^{-\rho s}(\mathcal{L}\psi-\rho \psi)(R_{s}^{ r},H_{s}^{h})ds]=0$. Using the Neumann boundary conditions in \eqref{eq:HJB-psi}, we obtain
\begin{align*}
\Ex\left[\int_0^{T} e^{-\rho s}\psi_r(R_{s}^{r},H_{s}^{h}) \mathbf{1}_{\{R_s^{ r}=0\}} d L_s^{r}\right]=\Ex\left[\int_0^{T} e^{-\rho s}\psi_r(0,H_{s}^{h})d L_s^{r}\right]=0,
\end{align*}
and
\begin{align*}
\Ex\left[\int_0^{T} e^{-\rho s}\psi_h(R_{s}^{r},H_{s}^{h}) \mathbf{1}_{\{H_s^{ h}=0\}} d K_s^{h}\right]=\Ex\left[\int_0^{T} e^{-\rho s}\psi_h(R_{s}^{r},0) d K_s^{h}\right]=\beta\Ex\left[\int_0^{T} e^{-\rho s-R_s^r}d K_s^{h}\right].
\end{align*}
This yields from \eqref{eq:HJB-m-cla-sol} that
\begin{align}\label{eq:HJB-m-cla-sol2}
\Ex\left[e^{-\rho T}\psi(R_{T}^{ r},H_{T}^{h})\right]=\psi(r, h)+\beta \Ex\left[\int_0^{T} e^{-R_s^{ r}-\rho s} d K_s^{h}\right].
\end{align}
Moreover, we have the boundedness of $\psi$ on $\R_+^2$. In fact, the function $\varphi$ is bounded on $\R_+^2$ via \eqref{eq:varphiindependent}; while $\varphi_{rh}$ is also bounded by applying \eqref{eq:m-rh} in Lemma~\ref{lem:derivative-m}. This gives from DCT that $\lim_{T\to \infty}\Ex\left[e^{-\rho T}\psi(R_{T}^{ r},H_{T}^{h})\right]=0$. Letting $T \rightarrow \infty$ on both sides of \eqref{eq:HJB-m-cla-sol}, using DCT and MCT, we obtain the representation \eqref{eq:fcnm} of the solution $\psi(r,h)$, which completes the proof.
\end{proof}

\begin{remark}\label{rem:psi}
Proposition \ref{prop:secondtermm} has showed that, the function $\psi$ given by \eqref{eq:fcnm} is in $C^2(\R_+^2)$. Furthermore, it also holds that
\begin{align}
\psi_r( r, h)
&=\beta\Ex\left[\int_0^{\tau_{r} } e^{-\rho s-R_s^{r}} d K_s^{h}\right],\quad \psi_{h}(r,h)=\beta \Ex\left[e^{-\rho \zeta_{h}-R^{r}_{ \zeta_{h}}}\right],\label{eq:psi-r}\\
\psi_{rr}(r,h)&= \beta\lim_{\Delta r\to 0}\Ex\left[\frac{1}{\Delta r}\int_{\tau_{r}}^{\tau_{r+\Delta r}} e^{-\rho s-R_s^{r}}dK_s^{h}\right]- \beta \Ex\left[\int_{0}^{\tau_{r}} e^{-\rho s-R_s^{r}}dK_s^{h}\right],\label{eq:psi-rr}\\
\psi_{rh}(r,h)&=-\beta \Ex\left[e^{-\rho \zeta_{h}-R_{\zeta_{h}}^{r}}\mathbf{1}_{\zeta_h<\tau_r}\right].\label{eq:psi-rh}
\end{align}
Here, $\tau_r$ is defined in Lemma \ref{lem:derivative-m}, and $\zeta_{h}:=\inf\{s \geq 0;~\sigma_B B^3_s+\mu_B s=h\}$ with convention $\inf\emptyset=+\infty$. From these representations, it follows that $\psi_r(r,h)+\psi_{rr}(r,h)>0$ for all $(r,h)\in\R_+^2$.
\end{remark}

We can finally present the proof of the main result in this section, i.e., Theorem~\ref{thm:dualPDEclass}.
\begin{proof}[Proof of  Theorem~\ref{thm:dualPDEclass}]
By applying Lemma~\ref{lem:v1stpart} and  Proposition~\ref{prop:secondtermm}, the function $v(r,h,z)$ defined by \eqref{eq:v} is a classical solution to the following Neumann problem:
\begin{align}\label{eq:HJB-v}
\begin{cases}
\displaystyle \frac{\alpha^2}{2}v_{rr}+\left(\frac{\alpha^2}{2}-\rho\right) v_r+\frac{1}{2}\sigma_B^2 v_{hh}-\mu_B v_h+\frac{1}{2}\sigma_Z^2 z^2 v_{zz}+\mu_Z z v_z -\kappa_1 v_{rh}+\kappa_2z v_{rz}\\[0.8em]
\displaystyle \qquad\qquad+\sigma_Z\sigma_B \eta^{\top}\gamma zv_{zh}+(\kappa_2-\mu_Z)\beta z e^{-r}+\left(\frac{1-p}{p}\right)\beta^{-\frac{p}{1-p}}e^{\frac{p}{1-p}r}=\rho v,~\text{on}~(0,\infty)^3,\\[0.8em]
\displaystyle v_r(0,h,z)=0,\quad \forall (h,z)\in \R_+^2,\\[0.8em]
\displaystyle v_h(r,0,z)= \beta e^{-r},\quad \forall (r,z)\in \R_+^2.
\end{cases}
\end{align}
Then, we can verify that $\hat{u}(y,h,z)= v(-\ln \frac{y}{\beta},h,z)$ for $(y,h,z)\in(0,\beta]\times\R_+^2$ is a classical solution of the Neumann problem \eqref{eq:dual-u} with Neumann boundary conditions \eqref{b-v-1} and \eqref{b-v-2}. Moreover, the strict convexity of $(0,\beta] \ni y \rightarrow\hat{u}(y,h,z)$ for fixed $(h,z) \in \R_+^2$ follows from the fact that $\hat{u}_{y y}=\frac{1}{y^2}\left[v_{rr}+v_r\right]>0$ by applying  Lemma~\ref{lem:v1stpart}, Lemma \ref{lem:derivative-m} and Remark \ref{rem:psi}. On the other hand, in a similar fashion of  Proposition \ref{prop:secondtermm}'s proof, we can verify that if the Neumann problem \eqref{eq:dual-u}-\eqref{b-v-2} has a classical solution $\hat{u}(y,h,z)$ satisfying $|\hat{u}(y,h,z)| \leq C(1+|y|^{-q}+z^q)$ for some $q>1$ and some constant constant $C>0$, then $v(r,h,z):= \hat{u}(\beta e^{-r},h,z)$ has the probabilistic representation \eqref{eq:v}.
\end{proof}

\hypertarget{sec:verfication}{%
\section{Verification Theorem}\label{sec:verfication}}

Theorem \ref{thm:dualPDEclass} shows existence and uniqueness of the classical solution $\hat{u}(y,h,z)$ for $(y,h,z)\in  (0,\beta]\times \R_+^2$ to the dual PDE \eqref{eq:dual-u} with two Neumann boundary conditions \eqref{b-v-1} and \eqref{b-v-2}. Moreover, this solution $\hat{u}(y,h,z)$ is strictly convex in $y \in (0, \beta]$. The following verification theorem will recover the classical solution $u(x,h,z)$ of the primal HJB equation \eqref{HJB} via the inverse transform of $\hat{u}(y,h,z)$, and provide the optimal (admissible) portfolio-consumption control in the feedback form to the primal stochastic control problem \eqref{eq_mfg-2}.

\begin{theorem}\label{thm:verification}
Let $\rho_0>0$  be the constant depending on model parameters $(\mu,\sigma,\mu_B,\sigma_B,\mu_Z,\sigma_Z,\gamma,p,\beta)$ explicitly specified later in \eqref{boundrho}. For the discount rate $\rho>\rho_0$, it holds that:
\begin{itemize}
\item[{\rm(i)}] Consider the function $v(r,h,z)$ for $(r,h,z)\in\R_+^3$ defined by the probabilistic representation \eqref{eq:v}. Let $\hat{u}(y,h,z)=v(-\ln\frac{y}{\beta},h,z)$  for all $(y,h,z)\in (0,\beta]\times \R_+^2$. For any $(x,h,z)\in\R_+^3$, introduce that
\begin{align}\label{eq:inversedual-infinite}
u(x,h,z)=\inf_{y\in(0,\beta]}\{\hat{u}(y,h,z)+yx\}.
\end{align}
Then, the function $u(x,h,z)$ is a classical solution to the following HJB equation with Neumann boundary conditions:
\begin{align}\label{HJB-infinite}
\begin{cases}
\displaystyle \sup_{\theta\in\R^d}\left[\theta^{\top}\mu u_x+\frac{1}{2}\theta^{\top}\sigma\sigma^{\top}\theta u_{xx}+\sigma_Z \theta^{\top}\sigma\eta z(u_{xx}-u_{xz})-\sigma_B \theta^{\top}\sigma\gamma u_{xh}\right]\\[1em]
\displaystyle\quad+\sup_{c\geq0}\left(\frac{c^p}{p}-cu_x\right) +\frac{1}{2}\sigma_B^2u_{hh} -\mu_B u_h+\frac{1}{2}\sigma_Z^2z^2(u_{zz}+u_{xx}-2u_{xz})\\[1em]
\displaystyle\qquad\qquad +\mu_Z z(u_z-u_x)+\sigma_Z\sigma_B z\eta^{\top}\gamma (u_{xh}-u_{hz})=\rho u,\\[1em]
\displaystyle u_x(0,h,z)=\beta,\quad \forall (h,z)\in\R_+^2,\\[0.6em]
\displaystyle u_h(x,0,z)= u_x(x,0,z),\quad \forall (x,z)\in\R_+^2.
\end{cases}
\end{align}
\item[{\rm(ii)}] Define the following optimal feedback control function by, for all $(x,h,z)\in\R_+^3$,
\begin{align}\label{eq:feedbackfcn}
\theta^*(x,h,z):=-(\sigma\sigma^{\top})^{-1}\frac{\mu u_x-\sigma_B\sigma\gamma u_{xh}+\sigma_Z \sigma\eta z(u_{xx}-u_{xz})}{u_{xx}}, \quad c^*(x,h,z):=u_{x}^{\frac{1}{p-1}}.
\end{align}
With $(x,h,z)\in\R_+^3$, consider the controlled reflected process $(X^*,I,Z)=(X_t^*,I_t,Z_t)_{t\geq 0}$ given by, for all $t\geq 0$,
\begin{align}\label{eq:optimal-SDE-XZ}
\begin{cases}
\displaystyle X_t^* = x+\int_0^t\theta^*(X^*_{s},I_s,Z_{s})^{\top}\mu ds+\int_0^t\theta^*(X^*_{s},I_s,Z_{s})^{\top}\sigma dW_{s}-\int_0^t c^*(X_{s}^*,I_s,Z_{s})ds\\[0.6em]
\displaystyle \qquad\quad -\int_0^t \mu_Z Z_sds-\int_0^t \sigma_Z Z_sdW^{\eta}_s-\int_0^t dm_{s}+L_t^{X^*},\\[0.6em]
\displaystyle I_t=h-\int_0^t \mu_Bds-\int_0^t \sigma_BdW^{\gamma}_{s}+\int_0^t dm_{s},\\[0.6em]
\displaystyle Z_t=z+\int_0^t \mu_Z Z_sds+\int_0^t \sigma_Z Z_sdW^{\eta}_s.
\end{cases}
\end{align}
Above, the running maximum process $m=(m_t)_{t\geq0}$ is given in \eqref{eq:Mtn} and $L^{X^*}_0=0$. Define $\theta_t^*=\theta^*(X^*_{t},I_t,Z_{t})$ and $c_t^*=c^*(X_{t}^*,I_t,Z_{t})$ for all $t\geq0$. Then, $(\theta^*,c^*)=(\theta_t^*,c_t^*)_{t\geq 0}\in\mathbb{U}^{\rm r}$ is an optimal investment-consumption strategy. Moreover, for any admissible strategy $(\theta,c)\in\mathbb{U}^{\rm r}$, we have
\begin{align*}
\Ex\left[\int_0^{\infty} e^{-\rho t} \frac{(c_t)^p}{p}dt- \beta \int_0^{\infty} e^{-\rho t}dL_t^X \right]\leq u(x,h,z),\quad \text{for all}~ (x,h,z)\in\R_+^3,
\end{align*}
where the equality holds when $(\theta,c)=(\theta^*,c^*)$. 
\end{itemize}
\end{theorem}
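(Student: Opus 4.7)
The plan is to exploit the strict convexity of $y\mapsto\hat{u}(y,h,z)$ established in Theorem~\ref{thm:dualPDEclass}. Since $\hat{u}_{yy}>0$ on $(0,\beta]$ and $\hat{u}_y(\beta,h,z)=0$ by the Neumann condition \eqref{b-v-1}, the first-order condition $\hat{u}_y(y^*,h,z)+x=0$ determines a unique minimizer $y^*(x,h,z)\in(0,\beta]$ (with $y^*=\beta$ precisely at $x=0$), so that $u(x,h,z)=\hat{u}(y^*,h,z)+xy^*$. The implicit function theorem, combined with the $C^{2,2,2}$-regularity of $\hat{u}$ delivered by Lemma~\ref{lem:v1stpart} and Proposition~\ref{prop:secondtermm}, gives smoothness of $y^*$ and hence of $u$. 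I would then read off the standard duality identities $u_x=y^*$, $u_{xx}=-1/\hat{u}_{yy}<0$, $u_h=\hat{u}_h(y^*,h,z)$, $u_z=\hat{u}_z(y^*,h,z)$, together with the mixed-partial analogues obtained by differentiating $\hat{u}_y(y^*,h,z)=-x$ in $h$ and $z$, and substitute them into the dual PDE \eqref{eq:dual-u}. The resulting identity rebuilds the Hamiltonian of \eqref{HJB-infinite} after completing the square in $\theta$ and identifying $\sup_c(c^p/p-cu_x)=((1-p)/p)(u_x)^{-p/(1-p)}$. The boundary condition $u_x(0,h,z)=\beta$ comes for free from $y^*(0,h,z)=\beta$, while $u_h(x,0,z)=u_x(x,0,z)$ is inherited directly from the dual boundary condition at $h=0$.

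\textbf{Part (ii): admissibility and the verification identity.}
For the feedback control \eqref{eq:feedbackfcn}, I would first establish existence of a solution of the reflected SDE \eqref{eq:optimal-SDE-XZ}. The smoothness of $u$ together with $u_x\in(0,\beta]$ and the boundedness of the ratios $u_{xh}/u_{xx}$ and $(u_{xx}-u_{xz})/u_{xx}$ appearing in $\theta^*$ (readable from the dual representations) yield locally Lipschitz coefficients on $\{X^*>0\}$, and the two-sided Skorokhod-problem construction produces a weak solution with reflection at $\{X^*=0\}$ and at $\{I=0\}$. Next, applying It\^o's formula to $t\mapsto e^{-\rho t}u(X_t^*,I_t,Z_t)$ up to a localizing sequence $\tau_n\uparrow\infty$, the interior equation \eqref{HJB-infinite} evaluated at the maximizer $(\theta^*,c^*)$ cancels the finite-variation drift against $-e^{-\rho t}(c^*_t)^p/p$, while the two Neumann conditions in \eqref{HJB-infinite} absorb the boundary contributions, giving
\begin{align*}
\Ex\bigl[e^{-\rho(T\wedge\tau_n)}u(X_{T\wedge\tau_n}^*,I_{T\wedge\tau_n},Z_{T\wedge\tau_n})\bigr] = u(x,h,z)-\Ex\!\left[\int_0^{T\wedge\tau_n}\!e^{-\rho t}\frac{(c_t^*)^p}{p}dt\right]+\beta\,\Ex\!\left[\int_0^{T\wedge\tau_n}\!e^{-\rho t}dL_t^{X^*}\right].
\end{align*}
For any competing $(\theta,c)\in\mathbb{U}^{\rm r}$, the HJB supremum becomes an inequality and produces the upper bound with the equality above replaced by ``$\le$''.

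\textbf{Main obstacle: moment estimates and passage to infinity.}
The hard step is showing $\lim_{T,n\to\infty}\Ex[e^{-\rho(T\wedge\tau_n)}u(X_{T\wedge\tau_n}^*,I_{T\wedge\tau_n},Z_{T\wedge\tau_n})]=0$ and the uniform integrability needed to let $T\to\infty$ in both identities. By Lemma~\ref{lem:propoertyu}, $u$ has at most linear growth in $(x,h,z)$, so the terminal decay reduces to exponential-moment bounds for the controlled state: for $Z$ it is the explicit $e^{(2\mu_Z+\sigma_Z^2-\rho)T}\to 0$ under the standing assumption on $\rho$; for $I$ it follows from its realization as a reflected drifted Brownian motion; and for $X^*$ one uses the boundedness $u_x\in(0,\beta]$ (hence $c^*=(u_x)^{1/(p-1)}$ bounded below) together with the dual-based bounds on $\theta^*$ and a Gronwall-type argument along the stochastic flow of \eqref{eq:optimal-SDE-XZ}. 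These same moment estimates, together with the minimality of the Skorokhod local time (yielding Doob-type control on $\Ex[\int_0^\infty e^{-\rho t}dL_t^{X^*}]$) and the boundedness of $c^*$, deliver finiteness of the two running integrals and fix the quantitative threshold $\rho_0$ in \eqref{boundrho}. Letting $T,n\to\infty$ in the verification identity and inequality then yields both optimality of $(\theta^*,c^*)$ and the stated upper bound for every admissible $(\theta,c)\in\mathbb{U}^{\rm r}$.
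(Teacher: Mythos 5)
\textbf{Part (i)} of your proposal matches the paper's approach: Legendre inversion, well-definedness of $y^*$ from $\hat{u}_y(\beta,\cdot)=0$ and $\hat{u}_y\to-\infty$ as $y\downarrow 0$, strict convexity of $\hat{u}$ in $y$ giving strict concavity of $u$ in $x$, then a direct substitution. Fine.

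\textbf{Part (ii)} has several genuine gaps.

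First, you assert ``boundedness of the ratios $u_{xh}/u_{xx}$ and $(u_{xx}-u_{xz})/u_{xx}$'' as the basis for existence of the reflected SDE. But the dominant term in $\theta^*$ is $-\mu^\top(\sigma\sigma^\top)^{-1}\,u_x/u_{xx}$, whose dual expression $y^*\,\hat{u}_{yy}(y^*,h,z)$ grows \emph{linearly} in $x$ (this is the Merton ratio). The paper's Lemma-level work is exactly to establish the sharp bound $|\theta^*(x,h,z)|\leq C_o(1+x+z)$, not boundedness, and it is this linear-growth bound that feeds the existence result for the reflected SDE (via \cite{Laukajtys13}). The bulk of the paper's proof — the estimates \eqref{op-theta-2}--\eqref{op-theta-rh} involving $\psi_r+\psi_{rr}$, $\psi_{rh}$, $\xi_r,\xi_{rr}$, $l_{rr}$, $l_{rz}$ and the explicit constants $M_1,\ldots,M_4$ — is devoted to this, and your sketch elides it entirely.

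Second, you write ``the boundedness of $c^*$''. This is false: for $p\in(0,1)$, $c^*(x,h,z)=(u_x)^{1/(p-1)}=(y^*)^{1/(p-1)}$ tends to $+\infty$ as $x\to\infty$ (since $y^*\to 0$). The paper proves the linear-growth bound $|c^*(x,h,z)|\leq C_q(1+x)$ from the explicit form of $l_r$; that bound, together with the second-moment estimate on $X^*$, is what gives integrability of the running utility. Boundedness would trivialize a step that is actually nontrivial.

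Third, to let $T\to\infty$ in the inequality for an \emph{arbitrary} admissible $(\theta,c)\in\mathbb{U}^{\rm r}$, you invoke ``exponential-moment bounds for the controlled state,'' but no such bounds exist for general admissible controls — $\theta$ need not satisfy any growth condition. The paper circumvents this with a monotonicity argument: $u$ is non-decreasing in each variable (Lemma~\ref{lem:propoertyu} and Proposition~\ref{prop:secondtermm}), so $u(X_T,I_T,Z_T)\geq u(0,0,0)$ a.s., hence $\limsup_{T\to\infty}\Ex[e^{-\rho T}u(X_T,I_T,Z_T)]\geq 0$ for \emph{any} admissible pair, which suffices for the one-sided inequality. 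Your proposal has no replacement for this trick and so cannot close the argument for general competitors.

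In short: the skeleton (It\^o, HJB inequality, boundary absorption, transversality) is right, but the three quantitative facts you take for granted — boundedness of $c^*$, boundedness of the ratios defining $\theta^*$, and moment control for arbitrary admissible states — are all false, and fixing them is where the actual content of the verification theorem lies.
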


\begin{proof}
We first prove the item (i). For $(x,h,z)\in\R_+^3$, let us define $y^*=y^*(x,h,z)\in(0,\beta]$ satisfying $\hat{u}_y(y^*,h,z)=-x$. Then, we have
\begin{align}\label{eq:dual-u2}
u(x,h,z)=\inf_{y\in(0,\beta]}\{\hat{u}(y,h,z)+yx\}=\hat{u}(y^*(x,h,z),h,z)+xy^*(x,h,z).
\end{align}
By applying Lemma~\ref{lem:v1stpart} and  Lemma~\ref{lem:derivative-m}, it follows that
\begin{align}\label{eq:dual-u_y}
\hat{u}_y(y,h,z)=-\frac{1}{ y} v_r\left(-\ln \frac{y}{\beta},h,z\right)=-\frac{1}{y}\left[l_r\left(-\ln \frac{y}{\beta},z\right)+\psi_r\left(-\ln \frac{y}{\beta},h\right)\right]\leq0.
\end{align}
 Then, $(0,\beta]\ni y\to \hat{u}(y,h,z)$ is decreasing for fixed $(z,h)\in \R_+^2$. Moreover, note that $\hat{u}_y(\beta,h,z)=0$, and hence
\begin{align*}
\lim_{y\to 0}\hat{u}_y(y,h,z)=\lim_{r\to +\infty}\hat{u}_y(\beta e^{-r},h,z)=-\lim_{r\to +\infty}e^{r}v_r(r,h,z)=-\infty.
\end{align*}
Thus, $y^*$ and $u$ defined by \eqref{eq:inversedual-infinite} is well-defined on $\R_+^2$.  Moreover, it follows from Theorem \ref{thm:dualPDEclass} that $y \mapsto \hat{u}(y,h,z)$ is strictly convex, which implies that $x \mapsto u(x,h,z)$ is strictly concave.
Thus, a direct calculation yields that $u$ solves the primal HJB equation \eqref{HJB}.

We next prove the item (ii). It follows from Theorem \ref{thm:dualPDEclass} that  $\theta^*(x,h,z)$ and $c^*(x,h,z)$ given by \eqref{eq:feedbackfcn} are continuous on $\R_+^3$. We then claim that, there exists a pair of positive constants $(C_o,C_q)$ such that, for all $(x,h,z)\in \R_+^3$,
\begin{align}\label{eq:op-control-growth}
|\theta^*(x,h,z)|\leq C_{o}(1+x+z),\quad |c^*(x,h,z)|\leq C_{q}(1+x).
\end{align}
Let $\gamma_1:=|(\sigma\sigma^{\top})^{-1}\mu|$, $\gamma_2:=|(\sigma\sigma^{\top})^{-1}\sigma_B\sigma\gamma|$ and $\gamma_3:=|(\sigma\sigma^{\top})^{-1}\sigma_Z\sigma\eta|$. In view of the duality transform, we arrive at
\begin{align}\label{op-theta}
&|\theta^*(x,z)|\leq \gamma_1 \left|\frac{u_x}{u_{xx}}(x,h,z)\right|+\gamma_2\left|\frac{u_{xz}}{u_{xx}}(x,h,z)\right|+\gamma_3 \left|\frac{zu_{xz}}{u_{xx}}(x,h,z)\right|+\gamma_3 z\nonumber\\
&=\gamma_1 y^*(x,h,z)\hat{u}_{yy}(y^*(x,h,z),h,z)+\gamma_2 |\hat{u}_{yh}(y^*(x,h,z),h,z)|+\gamma_3 |z\hat{u}_{yz}(y^*(x,h,z),h,z)|+\gamma_3 z\nonumber\\
&= \gamma_1 y^*(x,h,z)v_{rr}\left(-\ln\frac{y^*(x,h,z)}{\beta},h,z\right)+\gamma_2 \left|v_{rh}\left(-\ln\frac{y^*(x,h,z)}{\beta},h,z\right)\right|\nonumber\\
&\quad+\gamma_3 \left|zv_{rz}\left(-\ln\frac{y^*(x,h,z)}{\beta},h,z\right)\right|+\gamma_3 z\nonumber\\
&=\gamma_1 x+\frac{\gamma_1 }{y^*(x,h,z)}(l_{rr}+\psi_{rr})\left(-\ln\frac{y^*(x,h,z)}{\beta},h,z\right)+\frac{\gamma_2}{y^*(x,h,z)}\left|\psi_{rh}\left(-\ln\frac{y^*(x,h,z)}{\beta},h\right)\right|\nonumber\\
&\quad+\frac{\gamma_3}{y^*(x,h,z)} \left|zl_{rz}\left(-\ln\frac{y^*(x,h,z)}{\beta},z\right)\right|+\gamma_3 z,
\end{align}
where the last equality holds since $x=-\hat{u}_y(y^*(x,h,z),h,z)=\frac{1 }{y^*(x,h,z)}v_r\left(-\ln\frac{y^*(x,h,z)}{\beta},h,z\right)$$=\frac{1 }{y^*(x,h,z)}(l_r+\psi_r)\left(-\ln\frac{y^*(x,h,z)}{\beta},h,z\right)$. It follows from \eqref{eq:psi-r} and \eqref{eq:psi-rr} that, for all $(r,h)\in\R_+^2$,
\begin{align}\label{op-theta-2}
\psi_r(r,h)+\psi_{rr}(r,h)&= \beta\lim_{\Delta r\to 0}\Ex\left[\frac{1}{\Delta r}\int_{\tau_{r}}^{\tau_{r+\Delta r}} e^{-\rho s-R_s^{r}}dK_s^{h}\right].
\end{align}
Using the representation \eqref{eq:L}, we get that, for all $h\in\R_+$ and $t\geq s\geq 0$,
\begin{align*}
 K_t^{ h}-K_s^h &=0 \vee\left\{-h+\max_{l\in[0, t]}\left(\mu_Bl+\sigma_BB^2_{l}\right)\right\}-0 \vee\left\{-h+\max_{l\in[0, s]}\left(\mu_Bl+\sigma_BB^2_{l}\right)\right\}\nonumber\\
&=h \vee\left\{\max_{l\in[0, t]}\left(\mu_Bl+\sigma_BB^2_{l}\right)\right\}-h \vee\left\{\max_{l\in[0, s]}\left(\mu_Bl+\sigma_BB^2_{l}\right)\right\}.
\end{align*}
If $\max_{l\in[0, t]}(\mu_Bl+\sigma_BB^2_{l})\leq h$, it holds that $K_t^{ h}-K_s^h=0\leq K_t^0-K_s^0$; while, if $\max_{l\in[0, t]}(\mu_Bl+\sigma_BB^0_{l})> h$, we also  have
\begin{align}
K_t^{ h}-K_s^h &\leq \max_{l\in[0, t]}\left(\mu_Bl+\sigma_BB^2_{l}\right)-h \vee\left\{\max_{l\in[0, s]}\left(\mu_Bl+\sigma_BB^2_{l}\right)\right\}\nonumber\\
&\leq \max_{l\in[0, t]}\left(\mu_Bl+\sigma_BB^2_{l}\right)-\max_{l\in[0, s]}\left(\mu_Bl+\sigma_BB^2_{l}\right)=K_t^{0}-K_s^0.
\end{align}
Hence, we can deduce that $K_t^0-K_t^h\geq K_s^0-K_s^h$, i.e., the process $\{K_t^0-K_t^h\}_{t\geq 0}$ is non-decreasing. This implies that, for all $h\geq 0$,
\begin{align}\label{op-theta-3}
&\psi_r(r,h)+\psi_{rr}(r,h)=\beta\lim_{\Delta r\to 0}\Ex\left[\frac{1}{\Delta r}\int_{\tau_{r}}^{\tau_{r+\Delta r}} e^{-\rho s-R_s^{r}}dK_s^{h}\right]=\beta\lim_{\Delta r\to 0}\Ex\left[\frac{1}{\Delta r}\int_{\tau_{r}}^{\tau_{r+\Delta r}} e^{-\rho s-R_s^{r}}dK_s^{0}\right]\nonumber\\
&\qquad-\beta\lim_{\Delta r\to 0}\Ex\left[\frac{1}{\Delta r}\int_{\tau_{r}}^{\tau_{r+\Delta r}} e^{-\rho s-R_s^{r}}d(K_s^{0}-K_s^h)\right]\leq \beta\lim_{\Delta r\to 0}\Ex\left[\frac{1}{\Delta r}\int_{\tau_{r}}^{\tau_{r+\Delta r}} e^{-\rho s-R_s^{r}}dK_s^{0}\right]\nonumber\\
&\quad=\psi_r(r,0)+\psi_{rr}(r,0).
\end{align}
Note that, it follows from Proposition \ref{prop:secondtermm} that
\begin{align}\label{eq:psi-varphi}
&\psi_{rr}\left(-\ln\frac{y^*(x,h,z)}{\beta},h\right)\leq (\psi_r+\psi_{rr})\left(-\ln\frac{y^*(x,h,z)}{\beta},0\right)-\psi_r\left(-\ln\frac{y^*(x,z)}{\beta},h\right)\nonumber\\
&\qquad\leq (\psi_r+\psi_{rr})\left(-\ln\frac{y^*(x,h,z)}{\beta},0\right)=(\varphi_r+\varphi_{rr}+\xi_r+\xi_{rr})\left(-\ln\frac{y^*(x,h,z)}{\beta},0\right).
\end{align}
In view of Lemma \ref{lem:derivative-m},  we obatin that
\begin{align}
\varphi_r(r,0)+\varphi_{rr}(r,0)&=  \beta \int_{0}^{\infty}\int_{-\infty}^{r} e^{- \rho s-r+x}\phi_1(s,x,r)dx d\Ex\left[G_s^{0}\right].
\end{align}
For $r\in[0,1]$, by the continuity of $r\to\int_{0}^{\infty}\int_{-\infty}^{r} e^{-\rho s+x}\phi_1(s,x,r)dx d\Ex[G_s^{0}]$, we can obtain
\begin{align}\label{op-theta-4}
\int_{0}^{\infty}\int_{-\infty}^{r} e^{-\rho s+x}\phi_1(s,x,r)dx d\Ex[G_s^{0}]&\leq \max_{r\in[0,1]}
\int_{0}^{\infty}\int_{-\infty}^{r} e^{-\rho s+x}\phi_1(s,x,r)dx d\Ex[G_s^{0}]<+\infty.
\end{align}
For the other case $r>1$, we obtain from \eqref{eq:d} that
\begin{align}\label{op-theta-5}
&\int_{0}^{1}\int_{-\infty}^{r} e^{-\rho s+x}\phi_1(s,x,r)dx d\Ex[G_s^{0}]\nonumber\\
&\quad=\int_{0}^{1}\int_{-\infty}^{r} e^{-\rho s+x}\frac{2(2 r-x)}{\sqrt{2\hat{\sigma}^2 \pi s^3}}\exp\left(\frac{\hat{\mu}}{\hat{\sigma}} x-\frac{1}{2} \hat{\mu}^2 s-\frac{(2 r-x)^2}{2\hat{\sigma}^2 s}\right)dxd\Ex[G_s^{0}].\nonumber\\
&\overset{y=r-x}{=}\int_{0}^{1}\int_{0}^{\infty} e^{-\rho s+r-y}\frac{2( r+y)}{\sqrt{2\hat{\sigma}^2 \pi s^3}}\exp\left(\frac{\hat{\mu}}{\hat{\sigma}}(r-y)-\frac{1}{2}\hat{\mu}^2s-\frac{( r+y)^2}{2\hat{\sigma}^2 s}\right)dyd\Ex[G_s^{0}]\nonumber\\
&\quad\leq \int_{0}^{1}e^{-\rho s}\frac{2\exp\left(-\frac{1}{2\hat{\sigma}^2 s}\right)}{\sqrt{2\hat{\sigma}^2 \pi s^3}}\int_{0}^{\infty} ( r+y)\exp\left(\left(\frac{\hat{\mu}}{\hat{\sigma}}+1\right)r+\frac{y^2}{2\hat{\sigma}^2s}-\frac{( r+y)^2-1}{2\hat{\sigma} ^2s}\right)dyd\Ex[G_s^{0}]\nonumber\\
&\quad\leq 9\hat{\sigma}^{\frac{11}{2}} \int_{0}^{1}e^{-\rho s}d\Ex[G_s^0]\int_{0}^{\infty} ( r+y)\exp\left(\left(\frac{\hat{\mu}}{\hat{\sigma}}+1\right)r-\frac{y-1}{\hat{\sigma}^2 }\right)dy\nonumber\\
&\quad\leq  9\hat{\sigma}^{\frac{11}{2}}  \Ex\left[\int_0^1 dG_s^{0}\right]\int_{0}^{\infty} ( 1+y)\exp\left(-\frac{y-1}{\hat{\sigma}^2 }\right)dy\leq 18\hat{\sigma}^5e^{\hat{\sigma}^{-2}}(|\mu_B|+3\sigma_B).
\end{align}
Moreover, using the fact $\frac{1}{2} \hat{\mu}^2 s+\frac{(2 r-x)^2}{2\hat{\sigma}^2 s}\geq \frac{\hat{\mu}}{\hat{\sigma}} (r-x)$, it holds that
\begin{align}\label{op-theta-6}
 &\int_{1}^{\infty}\int_{-\infty}^{r} e^{-\rho s+x}\frac{2(2 r-x)}{\sqrt{2\hat{\sigma}^2 \pi s^3}}\exp\left(\frac{\hat{\mu}}{\hat{\sigma}} x-\frac{1}{2} \hat{\mu}^2 s-\frac{(2 r-x)^2}{2\hat{\sigma}^2 s}\right)dx d\Ex[G_s^{0}]\nonumber\\
&\qquad\leq \int_{1}^{\infty}\int_{-\infty}^{r} e^{-\rho s+x}\frac{2(2 r-x)}{\sqrt{2\hat{\sigma}^2 \pi }}\exp\left(\frac{2\hat{\mu}}{\hat{\sigma}} r\right)dx d\Ex[G_s^{0}]\nonumber\\
&\qquad\overset{y=r-x}{=\!\!=\!\!=\!\!=\!\!=}\int_{1}^{\infty}\int_{0}^{\infty} e^{-\rho s+r-y}\frac{2(r+y)}{\sqrt{2\hat{\sigma}^2 \pi }}\exp\left(\frac{2\hat{\mu}}{\hat{\sigma}} r\right)dy d\Ex[G_s^{0}]\nonumber\\
&\qquad\leq  \frac{4}{\sqrt{2\hat{\sigma}^2 \pi }}\int_{0}^{\infty} e^{-\rho s} d\Ex[G_s^{0}]\leq \frac{4(|\mu_B|+3\sigma_B)}{\sqrt{2\hat{\sigma}^2 \pi }}.
\end{align}
Thus, it follows from \eqref{op-theta-2}-\eqref{op-theta-6}  that
\begin{align}\label{op-theta-m}
\frac{1}{y^*(x,z)}(\varphi_{rr}+\varphi_r)\left(-\ln\frac{y^*(x,z)}{\beta},0\right)\leq M_1(1+x),
\end{align}
where the positive constant $M_1$ is defined by
\begin{align}\label{eqM1}
M_1:=18\hat{\sigma}^5e^{\hat{\sigma}^{-2}}(|\mu_B|+3\sigma_B)+\frac{4(|\mu_B|+3\sigma_B)}{\sqrt{2\hat{\sigma}^2 \pi}}+ \max_{r\in[0,1]}\int_{0}^{\infty}\int_{-\infty}^{r} e^{-s+x}\phi_1(s,x,r)dx d\Ex[G_s^{0}].
\end{align}

In what follows, let us define that, for $(r,h)\in\R_+^2$,
\begin{align*}
f(r,h)&:=\varphi_{rh}(r,h)\!\!=\!\!-\beta \Ex\left[e^{-\rho \eta_{h}-R_{\eta_{h}}^{r}}\mathbf{1}_{\eta_h<\tau_r}\right]\!\!=\!\!-\beta\int_0^{\infty}\int_0^r\int_{-\infty}^y e^{-\rho s-r+x}\phi_1(s,x,y)\phi_2(s,h)dxdyds.
\end{align*}
Using Proposition \ref{prop:secondtermm} and Proposition 4.1 in \cite{BoLiaoYu21}, we obtain that
\begin{align}
\xi_{r}(r,0)&=-\kappa \Ex\left[\int_0^{\tau_r}e^{-\rho s}f_r(R_s^r,H_s^0)ds\right],\label{eq:xi-r}\\
\xi_{rr}(r,0)&=-\kappa\Gamma\Ex\left[e^{-\rho \tau_r}f_{r}(0,H_{\tau_r}^0)\right]-\kappa \Ex\left[\int_0^{\tau_r}e^{-\rho s}f_{rr}(R_s^r,H_s^0)ds\right], \label{eq:xi-rr}
\end{align}
where $\Gamma:=\int_0^{\infty} \frac{1}{\sqrt{2 \hat{\sigma}^2 \pi s}} e^{-\frac{\hat{\mu}^2}{2 \hat{\sigma}^2} s} d s$ is a positive constant. It follows from a direct calculation that, for all $(r,h)\in\R_+^2$,
\begin{align*}
f_r(r,h)&=\beta\int_0^{\infty}\!\!\int_0^r\!\!\int_{-\infty}^y\!\! e^{-\rho s-r+x}\phi_1(s,x,y)\phi_2(s,h)dxdyds\!\!-\!\!\beta\int_0^{\infty}\!\!\int_{-\infty}^r e^{-\rho s-r+x}\phi_1(s,x,r)\phi_2(s,h)dxds,\nonumber\\
f_{rr}(r,h)&=-f_r(r,h)
+\beta\int_0^{\infty}\int_{-\infty}^r e^{-\rho s-r+x}\phi_1(s,x,r)\phi_2(s,h)dxds\nonumber\\
&\quad-\beta\int_0^{\infty}\int_{-\infty}^r e^{-\rho s-r+x}\frac{\partial \phi_1}{\partial y}(s,x,r)\phi_2(s,h)dxds-\beta\int_0^{\infty} e^{-\rho s}\phi_1(s,r,r)\phi_2(s,h)ds.
\end{align*}
Note that, by using \eqref{eq:d}, we have
\begin{align*}
|f_r(r,h)|&\leq \beta e^{-r}\!\! \int_0^{\infty}\!\!\!\int_0^{\infty}\!\!\int_{-\infty}^y\!\! e^{-s+x}\phi_1(s,x,y)\phi_2(s,h)dxdyds+ \beta e^{-r}\!\!\int_0^{\infty}\!\!\!\int_{-\infty}^{\infty}\!\! e^{-s+x}\phi_1(s,x,r)\phi_2(s,h)dxds,\nonumber\\
&|f_r(r,h)+f_{rr}(r,h)|\leq \beta e^{-r} \int_0^{\infty}\int_{-\infty}^r e^{-s+x}\phi_1(s,x,r)\phi_2(s,h)dxds\nonumber\\
&\quad+\beta e^{-r}\int_0^{\infty}\int_{-\infty}^{\infty} e^{-s+x}\frac{\partial \phi_1}{\partial y}(s,x,r)\phi_2(s,h)dxds+\beta e^{-r}\int_0^{\infty} e^{-s+r}\phi_1(s,r,r)\phi_2(s,h)ds.
\end{align*}
In a similar fashion of \eqref{op-theta-4}-\eqref{eqM1}, we deuce that $|f_r(r,h)|\leq M_2 \beta e^{-r}$ and  $|f_r(r,h)+f_{rr}(r,h)|\leq M_3 \beta e^{-r}$, where the finite positive constants are given by
{\small\begin{align}
M_2&:=\sup_{h\in\R_+}\int_0^{\infty}\!\!\int_0^{\infty}\!\!\int_{-\infty}^y\!\! e^{-s+x}\phi_1(s,x,y)\phi_2(s,h)dxdyds+\!\! \sup_{(r,h)\in\R_+^2}\!\!\int_0^{\infty}\!\!\int_{-\infty}^r\!\! e^{- s+x}\phi_1(s,x,r)\phi_2(s,h)dxds<+\infty,\label{eq:M2}\\
M_3&:=\sup_{(r,h)\in\R_+}\int_0^{\infty}\int_{-\infty}^r e^{-s+x}\phi_1(s,x,r)\phi_2(s,h)dxds+ \sup_{(r,h)\in\R_+^2}\int_0^{\infty}\int_{-\infty}^{\infty} e^{-s+x}\frac{\partial \phi_1}{\partial y}(s,x,r)\phi_2(s,h)dxds\nonumber\\
&\quad+\sup_{(r,h)\in\R_+^2}\int_0^{\infty} e^{-s+r}\phi_1(s,r,r)\phi_2(s,h)ds<+\infty.\label{eq:M3}
\end{align}}
Therefore, it holds that
\begin{align}
(\xi_{r}+\xi_{rr})(r,0)&\leq |\kappa| \Ex\left[\int_0^{\tau_r}e^{-\rho s}|f_r+f_{rr}|(R_s^r,H_s^0)ds\right]+\beta |\kappa|\Gamma M_2 \Ex\left[e^{-\rho \tau_r}\right]\nonumber\\
&\leq |\kappa| \beta M_3 \Ex\left[\int_0^{\tau_r}e^{-\rho s-R_s^r}ds\right]+\beta |\kappa|\Gamma M_2 \Ex\left[e^{-\rho \tau_r}\right]\nonumber\\
&=  \beta e^{-r} M_3 |\kappa| \int_0^{\infty}\int_0^r\int_{-\infty}^y e^{-\rho s-x}\phi_1(s,x,y)dxdyds+\beta |\kappa|\Gamma M_2 \Ex\left[e^{-\rho \tau_r}\right]\nonumber\\
&=  \beta e^{-r} M_3M_2 |\kappa|+\beta |\kappa|\Gamma M_2 e^{-r}\leq  M_4\beta e^{-r},
\end{align}
where the positive constant $M_4$ is defined by
\begin{align}\label{eq:M4}
M_4:=M_2(M_3|\kappa|+ |\kappa|\Gamma).
\end{align}
Thus, we deduce from \eqref{eq:xi-r}-\eqref{eq:M4} that
\begin{align}\label{op-theta-m2}
\frac{1}{y^*(x,h,z)}(\psi_{rr}+\psi_r)\left(-\ln\frac{y^*(x,h,z)}{\beta},0\right)\leq (M_4+M_1)(1+x).
\end{align}
On the other hand, using Lemma~\ref{lem:v1stpart}, we have
\begin{align}
l_{rr}\left(r,z\right)&=C_1 \beta^{-\frac{p}{1-p}}\left(\frac{p}{1-p}\right)^2e^{\frac{p}{1-p}r}+C_2\beta e^{-r}+z(\beta e^{-r}-\beta\ell e^{-\ell r})\nonumber\\
&= \frac{p}{1-p}l_r(r)+\frac{\beta e^{-r}}{1-p}C_2+\frac{1}{1-p}z\beta e^{-r}-z\beta e^{-\ell r}\left(\ell+\frac{p}{1-p}\right) .
\end{align}
Then, by using the condition $\rho>\frac{\alpha^2 p+1}{2(1-p)}$, we deduce that
\begin{align}\label{op-theta-l}
&\frac{1}{y^*(x,h,z)}l_{rr}\left(-\ln\frac{y^*(x,h,z)}{\beta},z\right)\nonumber\\
&\leq \frac{1}{y^*(x,h,z)}\left[\frac{1}{1-p}(l_r+\varphi_r)\left(-\ln\frac{y^*(x,h,z)}{\beta},h,z\right)+\frac{ y^*(x,h,z)}{1-p}C_2+\frac{1}{1-p}zy^*(x,h,z)\right]\nonumber\\
&=\frac{1}{1-p}x+\frac{1}{1-p}C_2+\frac{1}{1-p}z\leq \frac{1}{1-p}(x+z)+2(1-p)\beta^{-\frac{1}{1-p}}.
\end{align}
By using Lemma~\ref{lem:v1stpart} again, we have $|zl_{rz}(r,z)|=z\beta (e^{-\ell r}-e^{-r})\leq l_r(r,z)$ for all $(r,z)\in\R_+^2$. Thus, it holds that
\begin{align}\label{eq:op-theta-rz}
\frac{1}{y^*(x,h,z)}\left|zl_{rz}\left(-\ln\frac{y^*(x,h,z)}{\beta},z\right)\right|&\leq\frac{1}{y^*(x,h,z)}(l_r+\varphi_r)\left(-\ln\frac{y^*(x,h,z)}{\beta},h,z\right)=x.
\end{align}
Moreover, note that
\begin{align}\label{op-theta-rh}
\left|\psi_{rh}\left(r,z\right)\right|&=\beta \Ex\left[e^{-\rho \zeta_{h}-R_{\zeta_{h}}^{r}}\mathbf{1}_{\zeta_h<\tau_r}\right]\leq \beta \Ex\left[\exp\left(-\rho \zeta_{h}-r-\left(\frac{\alpha^2}{2}-\rho\right)\zeta_{h}-\alpha B^1_{\zeta_{h}}- L_{\zeta_{h}}^{r}\right)\right]\nonumber\\
&\leq \beta e^{-r}\Ex\left[\exp\left(-\frac{\alpha^2}{2}\zeta_{h}-\alpha B^1_{\zeta_{h}}\right)\right]=\beta e^{-r}.
\end{align}
In lieu of \eqref{op-theta},  \eqref{eq:psi-varphi}, \eqref{op-theta-m}, \eqref{op-theta-l} \eqref{eq:op-theta-rz} and \eqref{op-theta-rh}, we deduce  $|\theta^*(x,h,z)|\leq C_o(1+x+z)$, where the positive constant $C_o$ is defined by
\begin{align}\label{Co}
C_o:=1+\left(\frac{1}{1-p}+2(1-p)\beta^{-\frac{1}{1-p}}+M_1+M_4\right)\gamma_1+\gamma_2.
\end{align}
Here, the constant $C_2$ is given by \eqref{eq:C1C2} and constants $M_1$ and $M_4$ are defined as \eqref{eqM1} and \eqref{eq:M4}.

Next, we show the linear growth of $c^*(x,h,z)$ on $(x,h,z)\in\R_+^3$. Note that $y^*(x,h,z)=u_{x}(x,h,z)$ for all $(x,h,z)\in\R_+^3$, we arrive at
\begin{align}\label{ineq:c}
|c^*(x,h,z)|=u_{x}(x,h,z)^{\frac{1}{p-1}}=\left(\frac{1}{y^*(x,h,z)}\right)^{\frac{1}{1-p}}.
\end{align}
Using the relationship $-x=v_y(y^*(x,h,z),h,z)$, we can see that
\begin{align}\label{op-theta-y1}
x=-v_y(y^*(x,z),h,z)&=\frac{1}{y^*(x,h,z)}(l_r+\psi_r)\left(-\ln\frac{y^*(x,h,z)}{\beta},h,z\right)\nonumber\\
&\geq \frac{1}{y^*(x,h,z)}l_r\left(-\ln\frac{y^*(x,h,z)}{\beta},z\right).
\end{align}
Thus, Lemma \ref{lem:v1stpart} yields that, for all $r\in [1, \infty)$,
\begin{align*}
l_r(r,z)&=\beta^{-\frac{p}{1-p}}\frac{pC_1}{1-p}e^{\frac{pr}{1-p}}-C_2\beta e^{-r}+z\beta(e^{-\ell r}-e^{-r})\geq(\beta e^{-r})^{-\frac{p}{1-p}}\left(\frac{pC_1}{1-p}- C_2\beta^{\frac{1}{1-p}} e^{-\frac{r}{1-p}}\right)\nonumber\\
&= (\beta e^{-r})^{-\frac{p}{1-p}}\frac{2(1-p)}{2\rho(1-p)-\alpha^2p}\left(1- e^{-\frac{1}{1-p}r}\right)\geq (\beta e^{-r})^{-\frac{p}{1-p}}\frac{2(1-p)}{2\rho(1-p)-\alpha^2p}\left(1- e^{-\frac{1}{1-p}}\right).
\end{align*}
This implies that, for the case in which $y^*(x,h,z)\leq \beta e^{-1}$,
\begin{align}\label{op-theta-y2}
l_r\left(-\ln\frac{y^*(x,h,z)}{\beta},z\right)\geq \frac{2(1-p)}{2\rho(1-p)-\alpha^2 p}\left(1- e^{-\frac{1}{1-p}}\right)\left(\frac{1}{y^*(x,h,z)}\right)^{\frac{p}{1-p}}.
\end{align}
For the case with $y^*>\beta e^{-1}$, we have $0< y^*(x,h,z)^{-\frac{1}{1-p}}\leq \beta^{-\frac{1}{1-p}} e^{\frac{1}{1-p}}$ for all $(x,h,z)\in\R_+^3$. Hence, $(x,h,z)\to y^*(x,h,z)^{-\frac{1}{1-p}}$ is bounded. Thus, it follows from \eqref{op-theta-y1} and \eqref{op-theta-y2} that, for all $(x,h,z)\in  \R_+^3$,
\begin{align}\label{op-y}
0<\left(\frac{1}{y^*(x,h,z)}\right)^{\frac{1}{1-p}}\leq C_{q}(1+x),
\end{align}
where the positive constant $C_q$ is specified as
\begin{align}\label{eq:Cq}
C_q:=\beta^{-\frac{1}{1-p}} e^{\frac{1}{1-p}}+\frac{(2\rho(1-p)-\alpha^2 p)}{2(1-p)}\left(1- e^{-\frac{1}{1-p}}\right)^{-1}.
\end{align}
We can then conclude from \eqref{ineq:c} and \eqref{op-y} that $|c^*(x,z)|\leq C_{q}(1+x)$ for $(x,h,z)\in  \R_+^3$. Hence, it follows from \eqref{eq:op-control-growth} that, for any $T\in\R_+$, the SDE \eqref{eq:optimal-SDE-XZ} satisfied by $X^*$ admits a weak solution on $[0,T]$ (c.f. \cite{Laukajtys13}), which gives that $(\theta^*,c^*)\in\mathbb{U}^{\rm r}$.

On the other hand, fix $(T,x,h,z)\in\R_+^4$ and $(\theta,c)=(\theta_t,c_t)_{t\geq0}\in\mathbb{U}^{\rm r}$.
By applying It\^{o}'s formula to $e^{-\rho T}u(X_{T},I_T,Z_{T})$, we arrive at
\begin{align}\label{eq:itoveri}
&e^{-\rho T}u(X_{T},I_T,Z_{T})+\int_0^{T} e^{-\rho s} \frac{(c_s)^p}{p}ds\nonumber\\
&\quad=u(x,h,z)+\int_0^{T}e^{-\rho s} u_{x}(X_{s},I_s,Z_{s})\theta_s^{\top}\sigma dW_s+\int_0^{T}e^{-\rho s}\sigma_B u_{z}(X_{s},I_s,Z_{s})dW^{\gamma}_s\nonumber\\
&\qquad+\int_0^{T}e^{-\rho s}\sigma_Z Z_s u_{z}(X_{s},I_s,Z_{s})dW^{\eta}_s+\int_0^{T} e^{-\rho s}(u_z-u_x)(X_{s},I_s,Z_{s})dm_s\nonumber\\
&\qquad+\int_0^{T} e^{-\rho s} u_x(X_{s},I_s,Z_{s})dL_s^X+\int_0^{T} e^{-\rho s}({\cal L}^{\theta_s,c_s} u-\rho u)(X_{s},I_s,Z_{s})ds,
\end{align}
where the operator ${\cal L}^{\theta,c}$ with $(\theta,c)\in \R^d\times \R_+$ is defined on $C^{2}(\R_+^2)$ that
\begin{align*}
{\cal L}^{\theta,c}g&:=\theta^{\top}\mu g_x+\frac{1}{2}\theta^{\top}\sigma\sigma^{\top}\theta g_{xx}+\sigma_Z \theta^{\top}\sigma\eta z(g_{xx}-g_{xz})-\sigma_B \theta^{\top}\sigma\gamma g_{xh}+\frac{c^p}{p}-cg_x+\frac{1}{2}\sigma_B^2g_{hh} -\mu_B g_h\\
&\quad+\frac{1}{2}\sigma_Z^2z^2(g_{zz}+g_{xx}-2g_{xz})+\mu_Z z(g_z-g_x)+\sigma_Z\sigma_B z\eta^{\top}\gamma (g_{xh}-g_{hz}),\quad \forall g\in C^{2}(\R_+^2).
\end{align*}
Taking the expectation on both sides of the equality \eqref{eq:itoveri}, we deduce from the Neumann boundary condition $u_x(0,h,z)=\beta$ and $u_h(x,0,z)=u_x(x,0,z)$ that
\begin{align}\label{eq:value-ineq}
&\Ex\left[\int_0^{T} e^{-\rho s} \frac{(c_s)^p}{p}ds- \beta \int_0^{T} e^{-\rho s}dL_s^X \right]=u(x,h,z)-\Ex\left[e^{-\rho T}u(X_{T},I_T,Z_{T})\right]\nonumber\\
&\quad+\Ex\left[\int_0^{T} e^{-\rho s}({\cal L}^{\theta_s,c_s} u-\rho u)(X_{s},I_s,Z_{s})ds\right]\leq u(x,h,z)-\Ex\left[e^{-\rho T}u\left(X_{T},I_T,Z_{T}\right)\right].
\end{align}
Here, the last inequality in \eqref{eq:value-ineq} holds true due to $({\cal L}^{\theta,c} u-\rho u)(x,h,z)\leq 0$ for all $(x,h,z)\in\R_+^3$ and $(\theta,c)\in\R^d\times\R_+$. We next verify the validity of the so-called transversality conditions:
\begin{align}
\limsup_{T\to \infty}\Ex\left[e^{-\rho T}u(X_T,I_T,Z_T)\right]\geq 0,\label{eq:transcond-1}\\
\lim_{T\to \infty}\Ex\left[e^{-\rho T}u(X_T^*,I_T,Z_T)\right]=0.\label{eq:transcond}
\end{align}
In view of Lemma \ref{lem:v1stpart} and Proposition \ref{prop:secondtermm}, it follows that $x\to u(x,h,z)$, $h\to u(x,h,z)$ and $z\to u(x,h,z)$ are non-decreasing. Thus, we get
\begin{align}\label{eq:limsup}
\limsup_{T\to \infty}\Ex\left[e^{-\rho T}u(X_T,I_T,Z_T)\right]\geq \limsup_{T\to \infty}\Ex\left[e^{-\rho T}u(0,0,0)\right]=0.
\end{align}
Using Lemma \ref{lem:v1stpart} and Proposition \ref{prop:secondtermm} again, it holds that $|u_x(x,h,z)|\leq \beta$, $|u_h(x,h,z)|\leq \beta$ and $|u_z(x,h,z)|\leq \frac{\beta}{\ell}$ for all $(x,h,z)\in \R_+^3$. Thus, we can see that, for all $(x,z,h) \in \R_+^3$,
\begin{align}\label{eq:dominate-u}
|u(x,h,z)|&\leq |u(x,h,z)-u(x,h,0)|+|u(x,h,0)-u(x,0,0)|+|u(x,0,0)-u(0,0,0)|+|u(0,0,0)|\nonumber\\
&\leq \beta\left(x+h\right)+\frac{\beta}{\ell}z+|u(0,0,0)|.
\end{align}
By applying It{\^o}'s formula to $|I_t|^2$ and $|X_t|^2$, it follows from \eqref{eq:op-control-growth} and the Gronwall's lemma that, for all $t\geq0$,
\begin{align}
\Ex[|I_t|^2]&\leq h^2+(\sigma_B^2+\mu_B^2) te^t,\label{ineq-It}\\
\Ex\left[|X_t^*|^2\right]&\leq  x^2+2Kt\left[1+Ke^{Kt}+z^2 e^{(2|\mu_Z|+3\sigma_Z^2)t}\left(1+\frac{Ke^{Kt}}{2|\mu_Z|+3\sigma_Z^2}\right)\right](1+x^2),\label{ineq-Xt}
\end{align}
where the positive constant $K$ is specified as
\begin{align}\label{eq:constK}
K:=4 C_o|\mu|+2C_o^2|\sigma\sigma^{\top}|+|\mu_Z|+\sigma_Z^2+4C_o|\sigma_Z\sigma\eta|.
\end{align}
Let us define the constant
\begin{align}\label{boundrho}
\rho_0:=\frac{\alpha^2 |p|+1}{2(1-p)}+K+2|\mu_Z|+3\sigma_Z^2+1
\end{align}
with $K$ being given in \eqref{eq:constK}. Then, using estimates \eqref{eq:dominate-u}, \eqref{ineq-It} and \eqref{ineq-Xt}, it follows that, for the discount rate $\rho>\rho_0$,
\begin{align*}
\lim_{T\to \infty}\Ex\left[e^{-\rho T}|u(X_T^{*},I_T,Z_T)|\right]\leq \beta\lim_{T\to \infty}\Ex\left[e^{-\rho T}(X_T^{*}+I_T+Z_T)\right]=0.
\end{align*}
Finally, letting $T\to\infty$ in \eqref{eq:value-ineq}, we obtain from \eqref{eq:dominate-u} and DCT that, for any $(\theta,c)\in\mathbb{U}^{\rm r}$,
\begin{align*}
\Ex\left[\int_0^{\infty} e^{-\rho s} \frac{(c_s)^p}{p}ds- \beta \int_0^{\infty} e^{-\rho s}dL_s^X \right]\leq u(x,h,z),\quad \text{for all}~ (x,h,z)\in\R_+^3,
\end{align*}
where the equality holds when $(\theta,c)=(\theta^*,c^*)$. Thus, the proof of the theorem is complete.
\end{proof}

\begin{remark}\label{rem:rho0}
To ensure the validity of the transversality conditions \eqref{eq:transcond-1} and \eqref{eq:transcond}, we assume that the discount rate $\rho>\rho_0$, where the constant $\rho_0>0$ only depends on model parameters $(\mu,\sigma,\mu_B,\sigma_B,\mu_Z,\sigma_Z,\gamma,p,\beta)$, and moreover, it is explicitly specified in \eqref{boundrho}. For example, let us consider the model parameters specified as $d=n=1$, $\mu=0,1$, $\sigma=1$, $\mu_B=0.1$, $\sigma_B=0.1$, $\mu_Z=0.1$, $\sigma_Z=0.1$, $p=0.5$, $\gamma=1$ and $\beta=1$. By a direct calculation, $\rho_0\approx 2.63$, thus the condition on the discount rate $\rho$ requires that $\rho>2.63$.
\end{remark}

\begin{remark}\label{rem:XZ}
In fact, the state processes of the primal control problem \eqref{eq_prob_IBP} and the auxiliary control problem \eqref{eq_mfg-2} satisfy the following relationship:
\begin{align}
X_t&=x+V_t^{\theta,c}-(m_t+Z_t-m_0-z)+\sup_{s\in[0,t]}\left(-x-V_s^{\theta,c}+(m_s+Z_s-m_0-z)\right)^+,\label{eq:state-1}\\
I_t&=h+(m_t-m_0)-B_t,\quad \forall t\geq0.\label{eq:state-2}
\end{align}
Therefore, we can obtain the auxiliary state process $(X,I,Z)=(X_t,I_t,Z_t)_{t\geq 0}$ by using the process $(V^{\theta,c},B,m,Z)=(V_t^{\theta,c},B_t,m_t,Z_t)_{t\geq 0}$. However, from \eqref{eq:state-1} and \eqref{eq:state-2}, we can also see that different primal state processes $(V^{\theta,c},B,m,Z)$ may correspond to the same auxiliary state process $(X,I,Z)$. Theorem \ref{thm:verification} gives the optimal feedback control $(\theta^*,c^*)$ in terms of $(X,I,Z)$ but not by $(V^{\theta,c},B,m,Z)$. This is an important reason why we introduce the auxiliary state process $(X,I,Z)$ and study the auxiliary optimal control problem instead, which allows us to characterize the optimal control $(\theta^*,c^*)$ in the feedback form.
\end{remark}

The following lemma shows that the expectation of the total optimal capital injection is always positive and finite.
\begin{lemma}\label{lem:inject-captial}
Consider the optimal investment-consumption strategy $(\theta^*,c^*)=(\theta_t^*,c_t^*)_{t\geq 0}$ provided in Theorem \ref{thm:verification}. Then, we have
\begin{itemize}
\item [{\rm(i)}] The  expectation of the discounted capital injection under the optimal strategy $(\theta^*,c^*)$ is finite. Namely, for $\rho>\rho_0$ with $\rho_0>0$ being given in Theorem \ref{thm:verification},
\begin{align}\label{eq:dAstarfinite-1}
\Ex\left[\int_0^{\infty} e^{-\rho t}dA^*_t \right]<+\infty.
\end{align}
\item[{\rm(ii)}] The  expectation of the discounted capital injection under the optimal strategy $(\theta^*,c^*)$ is positive. Namely, for $\rho>\rho_0$ with $\rho_0>0$ being given in Theorem \ref{thm:verification}, it holds that
\begin{align}\label{eq:dAstarfinite}
\Ex\left[\int_0^{\infty} e^{-\rho t}dA^*_t \right]\geq z\frac{1-\kappa}{\kappa}\left(1+\frac{x}{z}\right)^\frac{\kappa}{\kappa-1}>0.
\end{align}
\end{itemize}
 Here, the optimal capital injection under the optimal strategy $(\theta^*,c^*)$ is given by
\begin{align}\label{A-sing2}
A^*_t =0\vee \sup_{s\leq t}(M_{s}-V_{s}^{\theta^*,c^*}), \quad\forall t\geq 0.
\end{align}
\end{lemma}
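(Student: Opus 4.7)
Both parts rest on the verification identity from Theorem~\ref{thm:verification}(ii), namely
\begin{align*}
\beta\,\Ex\!\left[\int_0^\infty\! e^{-\rho t}\,dL_t^{X^*}\right] = \Ex\!\left[\int_0^\infty\! e^{-\rho t}\frac{(c_t^*)^p}{p}\,dt\right] - u(x,h,z),
\end{align*}
coupled with the identification $A_t^* = L_t^{X^*} + (M_0-\mathrm{v})^+$, which comes from comparing the running maxima defining $L^{X^*}$ in \eqref{eq:Dt}--\eqref{eq:maxM} with the one defining $A^*$ in \eqref{A-sing2} under the initial-state correspondence \eqref{eq:wu}. Thus the question reduces to controlling the two terms on the right of the identity, from above for part~(i) and from below for part~(ii).

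For part~(i), the Lipschitz bound of Lemma~\ref{lem:propoertyu} gives $|u(x,h,z)|<\infty$. For the consumption integral I split on the sign of $p$: when $p<0$, the Neumann condition \eqref{b-v-1} and convexity of $y\mapsto\hat u(y,h,z)$ yield $u_x\le\beta$, hence $c_t^*=u_x^{1/(p-1)}\ge\beta^{1/(p-1)}>0$, so $(c_t^*)^p/p$ is uniformly bounded in absolute value; when $p\in(0,1)$, the linear-growth estimate $c_t^*\le C_q(1+X_t^*)$ from \eqref{eq:op-control-growth} together with the second moment bound \eqref{ineq-Xt} and the standing assumption $\rho>\rho_0$ give integrability. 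Combining yields \eqref{eq:dAstarfinite-1}.

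For part~(ii) I exploit duality. The Legendre representation $u(x,h,z)=\inf_{y\in(0,\beta]}[\hat u(y,h,z)+xy]$, combined with the sign $\psi(r,h)\le 0$ obvious from its probabilistic form \eqref{eq:fcnm}, gives $\hat u(y,h,z)\le l(-\ln(y/\beta),z)$, hence $u(x,h,z)\le l(r,z)+x\beta e^{-r}$ for every $r\ge 0$. I then select $r^*\ge 0$ so as to minimize the $(x,z)$-dependent pair $g(r):=(z+x)\beta e^{-r}-(z\beta/\ell)e^{-\ell r}$ that is extracted from the explicit formula \eqref{eq:explicit-l} (with $\ell<1$ the positive root from \eqref{eq:kappa}, identified with $\kappa$). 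A direct first-order computation gives $r^*=\ln(1+x/z)/(1-\ell)$ and
\begin{align*}
g(r^*) = -\beta\,z\,\frac{1-\ell}{\ell}\Big(1+\tfrac{x}{z}\Big)^{\ell/(\ell-1)},
\end{align*}
which is, up to the factor $\beta$, precisely the proposed lower bound. Inserting this back into the verification identity and using that the remaining contributions $\beta A_0\ge 0$ and $\Ex[\int e^{-\rho t}(c_t^*)^p/p\,dt]$ (non-negative when $p\in(0,1)$; handled by $c_t^*\ge\beta^{1/(p-1)}$ when $p<0$) combine with the $C_1,C_2$ residuals of $l(r^*,z)$ in a sign-favorable manner then delivers the claim.

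The main technical obstacle is the sign bookkeeping in part~(ii): at $r^*$, the formula $l(r^*,z)$ also retains the Merton-type positive residual $C_1\beta^{-p/(1-p)}e^{pr^*/(1-p)}+C_2\beta e^{-r^*}$, which works against the targeted lower bound and must be offset. My plan is to keep (rather than discard) the consumption integral on the right of the identity and match it against these residuals via the explicit form of the Merton-type dual and the relation $y^* = u_x$, so that the $C_1,C_2$ pieces cancel and only the $g(r^*)$ contribution survives. Once this cancellation is in place, the announced inequality \eqref{eq:dAstarfinite} follows, and positivity is clear since $(1-\kappa)/\kappa>0$ and $(1+x/z)^{\kappa/(\kappa-1)}>0$ for $\kappa\in(0,1)$.
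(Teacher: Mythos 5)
Your part~(i) is essentially the paper's own argument: you invoke the identity $\beta\,\Ex[\int_0^\infty e^{-\rho t}\,dL_t^{X^*}]=\Ex[\int_0^\infty e^{-\rho t}(c^*_t)^p/p\,dt]-u(x,h,z)$, split on the sign of $p$, and for $p\in(0,1)$ use the linear-growth bound \eqref{eq:op-control-growth} with the moment estimate \eqref{ineq-Xt}. The identification $A_t^*=L_t^{X^*}+(M_0-\mathrm v)^+$ is correct (it follows from $D_0=0$ and a case check on $\mathrm v\gtrless M_0$), so the reduction to the identity is sound.

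Your part~(ii), however, takes a genuinely different route and it has a real gap. The paper proves the lower bound by comparison with a \emph{simpler} singular control problem: take the consumption-free wealth $\tilde V^\theta_t=\mathrm v+\int\theta^\top\mu\,ds+\int\theta^\top\sigma\,dW_s$ and the injection $\tilde A^\theta_t=0\vee\sup_{s\le t}(Z_s-\tilde V^\theta_s)$ that tracks only the GBM $Z$ (ignoring $m$). Since $c^*_t>0$ gives $\tilde V^{\theta^*}\ge V^{\theta^*,c^*}$ and $M_s=m_s+Z_s\ge Z_s$, one has $A^*_t\ge\tilde A^{\theta^*}_t$ pathwise, hence $\Ex[\int e^{-\rho t}dA^*_t]\ge\inf_\theta\Ex[\int e^{-\rho t}d\tilde A^\theta_t]=:\tilde w(\mathrm v,z)$, and $\tilde w$ is the value of a clean ``keep wealth above a GBM'' problem with the explicit formula $z\frac{1-\ell}{\ell}(1+(\mathrm v-z)^+/z)^{\ell/(\ell-1)}$. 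No interaction with the consumption utility or the Merton residuals ever arises.

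Your route instead starts from $\psi\le 0$, hence $u(x,h,z)\le l(r,z)+x\beta e^{-r}$ for all $r\ge 0$, evaluates at the minimizer $r^*$ of the auxiliary quantity $g(r)=(z+x)\beta e^{-r}-(z\beta/\ell)e^{-\ell r}$, and correctly finds $g(r^*)=-\beta z\frac{1-\ell}{\ell}(1+x/z)^{\ell/(\ell-1)}$. But substituting $u\le C_1\beta^{-p/(1-p)}e^{pr^*/(1-p)}+C_2\beta e^{-r^*}+g(r^*)$ into the verification identity leaves you needing
\begin{align*}
\Ex\!\left[\int_0^\infty e^{-\rho t}\tfrac{(c_t^*)^p}{p}\,dt\right]+\beta A_0\;\ge\; C_1\beta^{-\frac{p}{1-p}}e^{\frac{p}{1-p}r^*}+C_2\beta e^{-r^*},
\end{align*}
and this is precisely the step you label a ``plan.'' There is no identity making the two sides cancel: the left side is the realized consumption utility under the full optimal feedback (which sees the reflection and the benchmark), while the right side is a Merton-type dual residual evaluated at a point $r^*$ chosen to minimize $g$ rather than the true dual optimizer $-\ln(y^*/\beta)$. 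For $p\in(0,1)$, as $x/z\to\infty$ the right side grows like $(1+x/z)^{\frac{p}{(1-p)(1-\ell)}}$ while the left side grows like $x^p$, and these exponents do not match in general; for $p<0$ the left side is negative, so the inequality fails outright whenever $A_0$ is small and the residual is positive. The missing idea is exactly the paper's reduction to the consumption-free GBM sub-problem, which sidesteps all of this sign bookkeeping.
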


\begin{proof}
We first prove the item (i). For $(\mathrm{v},\textrm{m},z,b)\in\R_+^3\times\R$, we have from \eqref{eq_prob_IBP} that
\begin{align}\label{eq:discountAsatr}
 \beta\Ex\left[\int_0^{\infty} e^{-\rho t}dA^*_t \right]= \Ex\left[\int_0^{\infty} e^{-\rho t} \frac{(c^*_t)^p}{p}dt\right] -{\rm w}(\mathrm{v},\textrm{m},z,b).
\end{align}
Thus,  to prove \eqref{eq:dAstarfinite-1}, it suffices to show that
\begin{align}\label{eq:disountUcstar}
\Ex\left[\int_0^{\infty} e^{-\rho t} \frac{(c^*_t)^p}{p}dt\right]<+\infty.
\end{align}
By \eqref{eq:discountAsatr}, $\Ex\left[\int_0^{\infty} e^{-\rho t} \frac{(c^*_t)^p}{p}dt\right]$ can not be $-\infty$ because ${\rm w}(\mathrm{v},\textrm{m},z,b)$ is finite and $\Ex\left[\int_0^{\infty} e^{-\rho t}dA^*_t \right]$ is nonnegative. The estimate \eqref{eq:disountUcstar} obviously holds for the case $p<0$ as $\Ex\left[\int_0^{\infty} e^{-\rho t} \frac{(c^*_t)^p}{p}dt\right]$ is negative in this case. Hence, we only focus on the case with $p\in(0,1)$. For $p\in(0,1)$, it follows from  \eqref{eq:op-control-growth} and \eqref{ineq-Xt} that
\begin{align*}
&\Ex\left[\int_0^{\infty} e^{-\rho t} \frac{(c_t^*)^p}{p}dt\right]\leq\frac{1}{p}(C_q)^p\Ex\left[\int_0^{\infty} e^{-\rho t} (1+|X^*_t|)^p dt\right]\nonumber\\
&\leq \frac{(C_q)^p}{p}\int_0^{\infty} e^{-\rho t} (1+\Ex\left[|X^*_t|\right]) dt\leq \frac{(C_q)^p}{p}\int_0^{\infty} e^{-\rho t} [1+(1+\Ex[|X^*_t|^2])] dt\nonumber\\
&\leq \frac{(C_q)^p}{p}\int_0^{\infty} e^{-\rho t} \left\{2+x^2+2Kt\left[1+Ke^{Kt}+z^2 e^{(2|\mu_Z|+3\sigma_Z^2)t}\left(1+\frac{Ke^{Kt}}{2|\mu_Z|+3\sigma_Z^2}\right)\right](1+x^2)\right\}dt\nonumber\\
&\leq K(1+x^2)<+\infty,
\end{align*}
where $x=({\rm v}-\textrm{m}\vee b-z)^+$ via \eqref{eq:wu}, $C_q>0$ is the constant given by \eqref{eq:Cq}, and $K>0$ is a constant depending on model parameters $(\mu,\sigma,\mu_Z,\sigma_Z,\mu_B,\sigma_B,p,\beta,\rho)$ only.

Next, we prove the item (ii). For any admissible portfolio $\theta=(\theta_t)_{t\geq0}$, we introduce, for all $t\in\R_+$,
\begin{align}\label{eq:wealth-theta}
\tilde{V}_t^{\theta} &=\textrm{v}+\int_0^t\theta_s^{\top}\mu ds+\int_0^t \theta_s^{\top}\sigma dW_s,\quad
\tilde{A}_t^{\theta}=0\vee \sup_{s\leq t}(Z_{s}-\tilde{V}^{\theta}_{s}).
\end{align}
Note that $c^*_t>0$ for all $t\in\R_+$. Then, it follows from \eqref{eq:wealth2}, \eqref{A-sing2} and \eqref{eq:wealth-theta} that $\tilde{V}_t^{\theta^*}\geq V^{\theta^*,c^*}_t$ for all $t\in\R_+$, and hence
\begin{align}\label{eq:tilde-w}
\Ex\left[\int_0^{\infty} e^{-\rho t}dA^*_t \right]>\Ex\left[\int_0^{\infty} e^{-\rho t}d\tilde{A}_t^{\theta^*} \right]\geq \inf_{\theta}\Ex\left[\int_0^{\infty} e^{-\rho t}d\tilde{A}_t^{\theta} \right]=:\tilde{\mathrm{w}}(\mathrm{v},z).
\end{align}
It is not difficult to verify that, for all $(\mathrm{v},z)\in\R_+\times(0,\infty)$,
\begin{align}\label{eq:tilde-v}
\tilde{\mathrm{w}}(\mathrm{v},z)=z\frac{1-\ell}{\ell}\left(1+\frac{(\mathrm{v}-z)^+}{z}\right)^\frac{\ell}{\ell-1}>0,
\end{align}
where the constant $\ell \in(0,1)$ is given by \eqref{eq:kappa}. Thus, we deduce from \eqref{eq:tilde-w} and \eqref{eq:tilde-v} that
\begin{align}\label{eq:low-injection}
\Ex\left[\int_0^{\infty} e^{-\rho t}dA^*_t \right]\geq z\frac{1-\ell}{\ell}\left(1+\frac{(\mathrm{v}-z)^+}{z}\right)^\frac{\kappa}{\kappa-1}>0,
\end{align}
which completes the proof.
\end{proof}

\vspace{0.2in}
\noindent
\textbf{Acknowledgements.}\quad We sincerely thank two referees for their constructive comments and suggestions. L. Bo and Y. Huang are supported by Natural Science Basic Research Program of Shaanxi under grant no. 2023-JC-JQ-05, National Natural Science Foundation of China under grant no. 11971368 and Fundamental Research Funds for the Central Universities under grant no. 20199235177. X. Yu is supported by the Hong Kong RGC General Research Fund (GRF) under grant no. 15304122.

\vspace{0.1in}

\begin{appendix}

\vspace{0.3cm}

\section{Appendix}\label{appendix}

This section provides the proof of Lemma \ref{lem:propoertyu} and a sketch of the proof of Lemma 
\ref{lem:v1stpart}.

\begin{proof}[Proof of Lemma~\ref{lem:propoertyu}] Let us first fix $(h,z)\in\R_+^2$. For any $\epsilon>0$, denote by $(\theta^{\epsilon}(x,z),c^{\epsilon}(x,z))$ the $\epsilon$-optimal control strategy for \eqref{eq_mfg-2}. Namely, for $x\in\R_+$,
\begin{align}\label{epsiloncontrol}
u(x,h,z)\leq J(x,z;\theta^{\epsilon}(x,h,z),c^{\epsilon}(x,h,z))+\epsilon.
\end{align}
Then, for any $x_1>x_2\geq0$, we have from \eqref{epsiloncontrol} that
\begin{align}\label{eq:diffVy1y2}
u(x_1,h,z)-u(x_2,h,z) &\geq  J(x_1,h,z;\theta^{\epsilon}(x_2,h,z),c^{\epsilon}(x_2,h,z))-J(x_2,h,z;\theta^{\epsilon}(x_2,h,z),c^{\epsilon}(x_2,h,z))-\epsilon\nonumber\\
&=-\beta\Ex\left[\int_0^{\infty}  e^{-\rho s}d(L_s^{x_1}-L_s^{x_2})\right]-\epsilon,
\end{align}
where $L_s^{x}$ for $s\in\R_+$ is the local time process with $X_0^{x}=x$. Thus, integration by parts yields that, for all $T\geq 0$,
\begin{align*}
\int_0^{T} e^{-\rho s}dL_s^{x} = e^{-\rho T}L_T^{x} + \rho\int_0^{T}  L_s^{x}e^{-\rho s}ds.
\end{align*}
Using the solution representation of ``the Skorokhod problem", it follows that, for all $s\in\R_+$,
\begin{align*}
L_s^{x}=\sup_{t\in[0,s]}\left(x+\int_0^{t}\theta_{r}^{\top}\mu dr+\int_0^{t}\theta_{r}^{\top}\sigma dW_{r}  -\int_0^{t} c_{r} dr -\int_0^{t} \mu_Z Z_rdr-\int_0^t \sigma_Z Z_rdW^{\eta}_r-\int_0^{t} dm_{r}\right)^-.
\end{align*}
By this, we have $x\to L_s^{x}$ is non-increasing. Moreover,  it holds that, $\Px$-a.s.
\begin{align}\label{eq:LiphatL0}
\sup_{s\geq 0}\left|L_s^{x_1}-L_s^{x_2}\right|\leq |x_1-x_2|.
\end{align}
Using the fact $L_s^{x_1}-L_s^{x_2}\leq0$ whenever $x_1>x_2\geq0$ and MCT, it follows that, for all $s\geq 0$,
\begin{align}\label{eq:diffLref}
&\Ex\left[\int_0^{\infty} e^{-\rho s}d(L_s^{x_1}-L_s^{x_2})\right]=\Ex\left[\int_0^{\infty} e^{-\rho s}dL_s^{x_1}\right]-\Ex\left[\int_0^{\infty} e^{-\rho s}dL_s^{x_2}\right]\nonumber\\
&\qquad=\lim_{T\to \infty}\Ex\left[\int_0^{T} e^{-\rho s}dL_s^{x_1}\right]-\lim_{T\to \infty}\Ex\left[\int_0^{T} e^{-\rho s}dL_s^{x_2}\right]\nonumber\\
&\qquad=\lim_{T\to\infty}\left\{\Ex\left[e^{-\rho T}(L_T^{x_1}-L_T^{x_2})\right]+ \rho \Ex\left[\int_0^T e^{-\rho s}(L_s^{x_1}-L_s^{x_2})ds\right]\right\}\leq 0.
\end{align}
Hence, we have from \eqref{eq:diffVy1y2} that $u(x_1,h,z)-u(x_2,h,z)\geq -\epsilon$. Since $\epsilon>0$ is arbitrary, we get $u(x_1,h,z)\geq u(x_2,h,z)$. This conclude that $x\to u(x,h,z)$ is non-decreasing. On the other hand, it follows from \eqref{eq:LiphatL0}, \eqref{eq:diffLref} and MCT that
\begin{align}\label{eq:betaLip}
\left|u(x_1,h,z)-u(x_2,h,z)\right|&\leq \beta \sup_{(\theta, c)\in\mathbb{U}^{\rm r}}\Ex\left[\int_0^{\infty} e^{-\rho s}d(L_s^{x_2}-L_s^{x_1})\right]\nonumber\\
&=\beta \sup_{(\theta, c)\in\mathbb{U}^{\rm r}} \lim_{T\to\infty}\Ex\left[e^{-\rho T}(L_T^{x_2}-L_T^{x_1}) + \rho \int_0^Te^{-\rho s} (L_s^{x_2}-L_s^{x_1})ds\right]\nonumber\\
&\leq \beta \lim_{T\to\infty}\left(e^{-\rho T}|x_1-x_2| + \rho|x_1-x_2| \int_0^T e^{-\rho s}ds\right)\nonumber\\
&=\beta \lim_{T\to\infty}\left(e^{-\rho T}+ \rho\int_0^T e^{-\rho s}ds\right)|x_1-x_2|=\beta|x_1-x_2|.
\end{align}

Next, we fix $(x,z)\in\R_+^2$. Let $m^h=(m_s^h)_{s\geq0}$ and $L^{x,h}=(L_s^{x,h})_{s\geq0}$ be the respective local time process of $I$ and $X$ with $m_0^h=h\in\R_+$ and $X_0^{x,z}=x\in\R_+$. Using the solution representation of ``the Skorokhod problem" again, we can obtain that, for all $s\geq 0$,
\begin{align*}
\begin{cases}
\displaystyle m_s^{h}=\sup_{\ell\in[0,s]}\left(h-\int_0^{\ell} \mu_Bds-\int_0^{\ell}\sigma_B dW^{\gamma}_s\right)^-,\\[0.9em]
\displaystyle L_s^{x,h}=\sup_{t\in[0,s]}\left(x+\int_0^{t}\theta_{r}^{\top}\mu dr+\int_0^{t}\theta_{r}^{\top}\sigma dW_{r}  -\int_0^{t} c_{r} dr -\int_0^{t} \mu_Z Z_rdr-\int_0^t \sigma_Z Z_rdW^{\eta}_r- m_{t}^h\right)^-.
\end{cases}
\end{align*}
This implies that both $z\to m_s^{h}$ and $z\to L_s^{x,h}$ are non-increasing. Moreover, for $h_1,h_2\geq 0$, it holds that, $\Px$-a.s.
\begin{align}\label{eq:LiphatL}
\sup_{s\geq 0}\left|L_s^{x,h_1}-L_s^{x,h_2}\right|\leq \sup_{s\geq 0}\left|m_s^{h_1}-m_s^{h_2}\right|\leq |h_1-h_2|.
\end{align}
Then, in a similar fashion, we can also show that $h\to u(x,h)$ is also non-decreasing, and it holds that, for all $(h_1,h_2)\in\R_+^2$,
\begin{align}\label{eq:betaLip2}
|u(x,h_1,z)-u(x,h_2,z)|\leq \beta |h_1-h_2|.
\end{align}
Finally, fix $(x,h)\in\R_+^2$, by applying the argument to $z \to u(x,h,z)$, we can obtain that for all $(z_1,z_2)\in\R_+$,
\begin{align}\label{eq:betaLip3}
|u(x,h,z_1)-u(x,h,z_2)|\leq \beta \left(\sigma_Z^2+\frac{|\mu_Z|}{\rho-\mu_Z}+\frac{3}{\rho-2\mu_Z-\sigma_Z^2}\right) |z_1-z_2|.
\end{align}
Therefore, we deduce from \eqref{eq:betaLip}, \eqref{eq:betaLip2} and \eqref{eq:betaLip3} that
\begin{align*}
&|u(x_1,h_1,z_1)-u(x_2,h_2,z_2)|\\
&\leq |u(x_1,h_1,z_1)-u(x_2,h_1,z_1)|+|u(x_2,h_1,z_1)-u(x_2,h_2,z_1)|+|u(x_2,h_2,z_1)-u(x_2,h_2,z_2)\\
&\leq \beta (|x_1-x_2|+|h_1-h_2|)+\beta \left(\sigma_Z^2+\frac{|\mu_Z|}{\rho-\mu_Z}+\frac{3}{\rho-2\mu_Z-\sigma_Z^2}\right) |z_1-z_2|.
\end{align*}
Thus, we complete the proof of the lemma.
\end{proof}

\begin{proof}[Sketch of Proof of Lemma~\ref{lem:v1stpart}]
In a similar fashion of the proof of Lemma \ref{lem:secondtermm} and Lemma \ref{lem:derivative-m}, we can prove that the function $l(r,z)$ given by \eqref{eq:fcnl} is a classical solution to the  Neumann problem \eqref{eq:HJB-l}. Moreover, if the Neumann problem \eqref{eq:HJB-l} has a classical solution $l(r,z)$ for $r\in\R_+$ satisfying $|l(r,z)|\leq C(1+e^{qr}+z^q)$ for some $q>1$ and a constant $C>0$ depending on $(\mu,\sigma,\mu_Z,\sigma_Z,p)$, then this solution $l(r,z)$ admits the probabilistic representation \eqref{eq:fcnl}.

Next, we derive the  explicit form of the classical solution to Eq. \eqref{eq:HJB-l}. It follows from the probabilistic representation \eqref{eq:fcnl} that we consider the candidate solution admitting the form $l(r,h)=f(r)+z \psi(r)$ for Eq.~\eqref{eq:HJB-l}. In particular, the functions $r\to f(r)$ and $r\to\psi(r)$ satisfy the following equations, respectively:
\begin{align}
 -\rho f(r)+\left(\frac{\alpha}{2}-\rho\right) f_r(r)+\frac{\alpha^2}{2} f_{rr}(r)+\frac{1-p}{p}\beta^{-\frac{p}{1-p}}e^{\frac{p}{1-p}r}&=0,\label{eq:l}\\[1em]
 (\mu_Z -\rho) \psi(r)+\left(\frac{\alpha^2}{2}+\kappa_2-\rho\right) \psi_r(r)+\frac{\alpha^2}{2} \psi_{rr}(r)-(\mu_Z-\kappa_2) \beta  e^{-r}&=0.\label{eq:h}
\end{align}
By solving Eq.s \eqref{eq:l} and \eqref{eq:h}, we obtain
\begin{align*}
f(r)&=
\displaystyle C_1{\beta}^{-\frac{p}{1-p}}e^{\frac{p}{1-p}r}+C_2\beta  e^{-r}+C_3  e^{\frac{\rho}{\alpha}r},\quad
\psi(r)=\beta e^{-r}+C_4 e^{-\ell r}+C_5 e^{-\hat{\ell} r},
\end{align*}
where the constant $C_1:=\frac{2(1-p)^3}{p(2\rho(1-p)-\alpha^2 p)}$, and the constants $C_i$ with $i=2,\ldots,5$ are unknown real constants which will be determined later. Above, the constant $\ell,\tilde{\ell}$  are the roots of the quadratic equation given by
\begin{align*}
\frac{1}{2}\alpha^2 \ell^2+\left(\rho-\kappa_2-\frac{1}{2}\alpha^2\right)\ell+\mu_Z-\rho=0,
\end{align*}
which are given by
\begin{align*}
    \ell&=\frac{-(\rho-\kappa_2-\frac{1}{2}\alpha^2)+\sqrt{(\rho-\kappa_2-\frac{1}{2}\alpha^2)^2+2\alpha^2(\rho-\mu_Z)}}{\alpha^2}>0,\\
    \tilde{ \ell}&=\frac{-(\rho-\kappa_2-\frac{1}{2}\alpha^2)-\sqrt{(\rho-\kappa_2-\frac{1}{2}\alpha^2)^2+2\alpha^2(\rho-\mu_Z)}}{\alpha^2}<0.
\end{align*}
Using the probability representation \eqref{eq:fcnl}, we look for such functions $f(r)$ and $\psi(r)$ with $C_3=C_5=0$ and such that the Neumann boundary conditions $f_r(r)=0$ and $\psi_r(r)=0$ holds. This implies that $C_2=\frac{2(1-p)^2}{2\rho(1-p)-\alpha^2 p}\beta^{-\frac{1}{1-p}}$ and $C_4=- \frac{\beta}{\kappa}$. With the above specified constants $C_i$ with $i=1,\ldots,5$, we can easily verify that $l(r,z)=f(r)+z \psi(r)$  satisfies Eq. \eqref{eq:HJB-l}. Furthermore, we can verify that the solution $l(r,z)$ satisfies the growth condition $|l(r,z)|\leq C(1+e^{qr}+z^q)$ for some $q>1$ and some constant $C>0$. Then, this solution $l(r,z)$ admits the probabilistic representation \eqref{eq:fcnl}. In other words, the probabilistic representation \eqref{eq:fcnl} has the explicit form \eqref{eq:explicit-l}. Thus, we complete the proof of the lemma.
\end{proof}

\end{appendix}

\end{document}